\setlist[itemize]{topsep=0pt}
\setlist[enumerate]{topsep=0pt}
\titleformat*{\section}{\centering\bfseries\small\MakeUppercase}
\titleformat*{\subsection}{\normalsize\bfseries}
\titlespacing\section{0pt}{6pt plus 4pt minus 2pt}{0pt plus 2pt minus 2pt}
\titlespacing{\subsection}{0pt}{-2pt}{-4pt}
\titlespacing{\subsubsection}{0pt}{-2pt}{-4pt}
\newtheoremstyle{exampstyle}
  {\topsep} 
  {0pt} 
  {\slshape} 
  {} 
  {\bfseries} 
  {.} 
  {.5em} 
  {} 
\theoremstyle{exampstyle}
\newtheorem{thm}{Theorem}[section]
\newtheorem{cor}[thm]{Corollary}
\newtheorem{lemma}[thm]{Lemma}
\newtheorem{conj}[thm]{Conjecture}
\newtheorem{prop}[thm]{Proposition}
\newtheoremstyle{def}
  {\topsep} 
  {0pt} 
  {} 
  {} 
  {\bfseries} 
  {.} 
  {.5em} 
  {} 
\theoremstyle{def}
\newtheorem{defn}[thm]{Definition}
\newtheorem{theorem}[thm]{Theorem}
\newtheorem{rmk}[thm]{Remark}
\newtheorem{exmp}[thm]{Example}
\newcommand{\comm}[1]{}
\newcommand{\Q}{\mathbb{Q}}
\newcommand{\Z}{\mathbb{Z}}
\newcommand{\ep}{\epsilon}
\newcommand{\e}{\varepsilon}
\newcommand{\Pos}{\mathrm{Pos}}
\newcommand{\Neg}{\mathrm{Neg}}
\newcommand{\myRed}[1]{\textbf{\textcolor{red}{#1}}}
\title{\large{\uppercase{\textbf{Conjugacy geodesics and growth in dihedral Artin groups}}}}
\date{}
\author{Laura Ciobanu, Gemma Crowe}
\begin{document}

\maketitle
\begin{abstract}
In this paper we describe conjugacy geodesic representatives in any dihedral Artin group $G(m)$, $m\geq 3$, which we then use to calculate asymptotics for the conjugacy growth of $G(m)$, and show that the conjugacy growth series of $G(m)$ with respect to the `free product' generating set $\{x, y\}$ is transcendental. We prove two additional properties of $G(m)$ that connect to conjugacy, namely that the permutation conjugator length function is constant, and that the falsification by fellow traveler property (FFTP) holds with respect to $\{x, y\}$. These imply that the language of all conjugacy geodesics in $G(m)$ with respect to $\{x, y\}$ is regular. \\

2020 Mathematics Subject Classification: 20E45, 20F36, 05E16.
\end{abstract}

\unmarkedfntext{\emph{Keywords}: Conjugacy growth, dihedral Artin groups, conjugator length, FFTP.}

\section{Introduction}
Let $G$ be a finitely generated group with generating set $X$. For any $n \geq 0$, the \emph{conjugacy growth function} $c(n)=c_{G,X}(n)$ counts the number of conjugacy classes with a minimal length representative of length $n$ with respect to $X$. The \emph{conjugacy growth series} of $G$ with respect to $X$ is then defined as the generating function for $c(n)$. Conjugacy growth has been studied in a variety of different groups \cite{AC2017, Guba2010, Hull2013, Mercier2016,  Rivin2010}, most recently including soluble Baumslag-Solitar groups \cite{CiobanuE2020} and graph products \cite{Ciobanu2023}. All known results support the following conjecture.
\begin{conj}\label{conj:evetts}\cite[Conjecture 7.2]{CiobanuE2020}
    Conjugacy growth series of finitely presented groups that are not virtually abelian are transcendental.
\end{conj}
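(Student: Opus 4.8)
The plan is to establish the converse of the one implication that is already a theorem: it is known that virtually abelian groups have rational conjugacy growth series with respect to every finite generating set, so the real content of the conjecture is that \emph{failing} to be virtually abelian forces transcendence. Since the class of finitely presented groups is organised, coarsely, only by growth type, I would first split along Gromov's dichotomy. If $G$ has polynomial growth it is virtually nilpotent, and being non-virtually-abelian it is then virtually nilpotent of class at least $2$; if $G$ has super-polynomial growth I would route the argument through whatever normal-form theory is available (automatic, hyperbolic, relatively hyperbolic, graph-product or Artin structure). The unifying target in both regimes is an analytic transcendence criterion, described next.

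The engine behind every known case is that conjugacy classes are counted \emph{up to cyclic symmetry}. Whenever $G$ admits a sufficiently regular language of conjugacy geodesics — of the kind produced in this paper for $G(m)$ — the count $c(n)$ decomposes as a necklace-type sum $c(n) = \tfrac{1}{n}\sum_{d \mid n}\phi(d)\,a_{n/d} + (\text{corrections})$, whose generating function is, up to manageable terms, of the form $\sum_{d \geq 1}\tfrac{\phi(d)}{d}\,L(t^{d})$ for an auxiliary series $L$ with a dominant singularity at some $\rho \in (0,1)$. I would then invoke the classical fact that an algebraic function has only finitely many singularities, whereas the substitutions $t \mapsto t^{d}$ scatter singularities across all the points $\rho^{1/d}$, accumulating on a circle and producing a natural boundary (or at least infinitely many singularities). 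This rules out rationality and algebraicity simultaneously. Reducing the conjecture to the statement ``every non-virtually-abelian $G$ admits a necklace decomposition with non-eventually-constant $a_{\bullet}$'' is the first main step.

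For the super-polynomial regime I would assemble the existing computations — free and hyperbolic groups, graph products, and soluble Baumslag--Solitar groups — into this necklace framework; in each, the dominant exponential term carries a $1/n$ factor incompatible with the quasi-polynomial-times-exponential coefficient asymptotics forced by rationality, while the totient sum obstructs algebraicity. The more delicate and, I expect, conceptually central case is the polynomial-growth one: virtually nilpotent but non-abelian groups. Here transcendence already surfaces at the level of \emph{ordinary} growth — Stoll's higher Heisenberg groups have transcendental growth series for suitable generating sets — so the plan is to show that the non-trivial commutator structure (twisted-conjugacy contributions from the centre) injects precisely the divisor-sum irregularity that the analytic criterion detects. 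Since this case is what actually separates ``virtually abelian'' from ``everything else'', I would prioritise it.

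The hard part, and the reason the conjecture remains open, is that none of this machinery is available uniformly. For a general finitely presented group the conjugacy problem can be undecidable, so there is no algorithmic normal form for conjugacy representatives and hence no guaranteed necklace decomposition — the one ingredient on which every known proof relies. Worse, transcendence of the conjugacy growth series is not known to be a quasi-isometry or commensurability invariant, and can in principle depend on the generating set, so one cannot reduce an arbitrary $G$ to a well-understood model by coarse geometry. For these reasons I do not expect a single uniform argument; the realistic plan is the class-by-class program — hyperbolic, virtually nilpotent, relatively hyperbolic, CAT(0) and Artin, exponential-growth soluble — of which the present treatment of dihedral Artin groups $G(m)$ is one step. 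A genuine resolution would require either a structural theorem producing conjugacy normal forms from finite presentability alone, or a transcendence criterion that bypasses normal forms entirely.
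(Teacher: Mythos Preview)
The statement you were given is a \emph{conjecture}, not a theorem: the paper does not prove it, and indeed explicitly cites it as \cite[Conjecture 7.2]{CiobanuE2020} and says only that the results of the paper ``provide further evidence supporting'' it. So there is no ``paper's own proof'' to compare against; the paper's contribution is the special case $G=G(m)$, handled by the explicit conjugacy-geodesic classification in Sections~3--5.

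Your write-up is not a proof but an outline of a research program, and to your credit you say as much in the final paragraph. The honest assessment is that there is a genuine gap everywhere: the necklace/natural-boundary mechanism you describe is exactly the engine behind the known cases, but the reduction step---``every non-virtually-abelian finitely presented $G$ admits a necklace decomposition with non-eventually-constant $a_\bullet$''---is not a lemma anyone knows how to prove, and your sketch gives no new idea for it. The obstacles you list (undecidable conjugacy problem, no uniform normal forms, possible generating-set dependence) are precisely why the conjecture is open; naming them is useful context but does not advance toward a proof. In short, what you have written is a reasonable survey of why the conjecture is believed and why it is hard, not a proof attempt that can be evaluated for correctness.
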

In this paper we study the conjugacy growth of dihedral Artin groups, that is, Artin groups 
with generating set of size two; these are the groups $G(m) = \langle a,b \; | \; _{m}(a,b) =  {}_{m}(b,a) \rangle$, where $m\geq 3$ and $_{m}(a,b)$ is the word $abab \dots$ of length $m$. The standard growth $s(n)$, which counts elements rather than conjugacy classes, of dihedral Artin groups, has been computed over the standard Artin generators $\{a,b\}$ and the Garside generators \cite{MAIRESSE2006}. It has also been computed (by Fujii \cite{Fujii2018}, and Edjvet and Johnson \cite{EdjJohn92}) over what we denote as \emph{free product generators} (see \cref{defn: gen set free product}), because the quotient of $G(m)$ by its centre is isomorphic to a free product of cyclic groups. Here we use their geodesic normal forms to compute conjugacy geodesic representatives (minimal length representatives, over the generators, for conjugacy classes) and obtain the asymptotics of $c(n)$, which together with analytic combinatorics tools give our main result, Theorem \ref{main result}. This provides further evidence supporting \cref{conj:evetts}.
\comm{
In this paper we study conjugacy growth of Artin groups of XXL-type. Artin groups are defined by a finite simple graph with edge labellings from $\Z_{\geq 2}$. For this paper, we focus on XXL-type Artin groups, where we have the added restriction that all edge labellings are at least 5. Various results are known for this subset of Artin groups, such as solvable conjugacy problem in cubic time \cite{Appel1983, Holt2015}, acylindrical hyperbolicity \cite{Haettel2022} and biautomaticity \cite{Peifer1996}. 

We first reduce the problem of computing the conjugacy growth of Artin groups of XXL-type, to considering conjugacy growth in \emph{dihedral Artin groups} of XXL-type, where our generating set $S$ has size two. }

\begin{thm}\label{main result}(\cref{thm:odd dihe trans} and \cref{thm:even dihe trans})
    The conjugacy growth series of any dihedral Artin group $G(m)$ is transcendental, with respect to the free product generating set.
\end{thm}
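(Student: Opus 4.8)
The plan is to extract the asymptotics of $c(n) = c_{G(m),\{x,y\}}(n)$ from the conjugacy geodesic normal forms, and then invoke a transfer theorem from analytic combinatorics to rule out algebraicity, following the overall strategy of \cite{CiobanuE2020,Ciobanu2023} but adapted to the central-extension structure of $G(m)$. I would treat $m$ odd and $m$ even separately (hence the two referenced statements), since the centre $Z(G(m))$ and the central quotient $Q = G(m)/Z(G(m))$ --- a free product of two finite cyclic groups whose orders depend on the parity of $m$ --- behave differently. The first step is to use the classification of conjugacy geodesic representatives obtained earlier, together with the geodesic normal forms over $\{x,y\}$ from \cite{Fujii2018,EdjJohn92}, to show that every conjugacy class of $G(m)$ lying outside a bounded neighbourhood of the centre is recorded --- up to a bounded ambiguity coming from the central $\mathbb{Z}$ --- by a cyclically reduced normal-form word read up to rotation, that is, by a \emph{necklace} subject to the reduction rules of $Q$; the conjugacy classes meeting a bounded neighbourhood of the centre contribute only $n^{O(1)}$ to $c(n)$ and are negligible for the leading term.

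Counting such necklaces over a fixed alphabet with forbidden factors is a classical Burnside/M\"obius computation, giving $c(n) = \tfrac1n \sum_{d \mid n} \phi(d)\, a_{n/d} + n^{O(1)}$, where $a_k$ is the number of cyclically reduced normal-form words of length $k$ and $\phi$ is Euler's totient. Since the corresponding language is regular, $a_k$ grows like $\kappa^{k}$ up to a constant, with $\kappa > 1$ the (already known) exponential growth rate of $G(m)$ over $\{x,y\}$, so the $d = 1$ term dominates and
\[
 c(n) \;\sim\; \frac{C}{n}\,\kappa^{\,n}, \qquad C \neq 0 .
\]
Equivalently, the conjugacy growth series carries a logarithmic singularity of type $\log\!\bigl(1/(1-\kappa z)\bigr)$ at its radius of convergence $\kappa^{-1}$, up to an algebraic factor.

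Transcendence then follows from singularity analysis: were the conjugacy growth series algebraic over $\mathbb{Q}(z)$, its coefficients would admit an asymptotic expansion $\sum_{j} \rho_j^{-n} n^{\alpha_j}(C_j + o(1))$ over a finite set of dominant singularities, with every exponent $\alpha_j \in \mathbb{Q} \setminus \{-1,-2,-3,\dots\}$; in particular no algebraic power series has a coefficient term proportional to $\kappa^{n} n^{-1}$, contradicting $c(n) \sim C \kappa^{n} n^{-1}$ with $C \neq 0$. Hence the conjugacy growth series of $G(m)$ with respect to the free product generating set is transcendental.

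I expect the main obstacle to be the error analysis underlying the asymptotic, rather than its main term: one must check that all degenerate contributions --- short conjugacy classes, non-primitive (periodic) necklaces, elements conjugate into or near the centre, and the interaction between the central $\mathbb{Z}$ and the torsion of $Q$ --- really are $o(\kappa^{n}/n)$, and that no cancellation among singularities of equal modulus (if necessary, passing to residue classes of $n$) destroys the clean $C\kappa^{n}/n$ asymptotic. Carrying this out uniformly for all $m \ge 3$, in both parities, is the delicate part; once the asymptotic $c(n) \sim C\kappa^{n}/n$ is established, the analytic-combinatorics conclusion is routine.
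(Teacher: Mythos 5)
Your overall architecture (conjugacy geodesic normal forms $\to$ asymptotics of $c(n)$ $\to$ Flajolet's transcendence criterion) matches the paper's, but your central quantitative claim is false, and the paper warns about exactly this in its introduction. You assert that every conjugacy class away from the centre is recorded, up to \emph{bounded} ambiguity, by a necklace, and deduce $c(n)\sim C\kappa^{n}/n$ by Burnside/M\"obius counting. In fact the ambiguity is not bounded: for the non-uniquely-geodesic elements (Types $(3^{0*})$ and $(3^{0\pm}N)$ when $m$ is odd, Type $(2b)$ when $m$ is even) conjugacy is captured not by cyclic permutation of the normal form but by \emph{split} cyclic permutation (\cref{defn: split odds}, \cref{def:X}, \cref{def:even_split_reps}): the tuple of special letters must be kept in a fixed order while only the intervening blocks are rotated. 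Concretely, a projected necklace with $r$ syllables lifts to $r$ distinct conjugacy classes of the same length (see the count $\#\{\phi^{-1}(u)\}=r$ in the proof of \cref{prop:oddBC}), and since $r$ can be of order $n$, the factor $1/n$ from necklace counting is cancelled. \cref{prop:asymptotics} records the true asymptotics: $c(n)\sim\alpha^{n}$ for $m$ odd and for $m=4k$, and $c(n)\sim\alpha^{n}/n$ only for $m=4k+2$. So your claimed asymptotic, and the Burnside formula behind it, fail for two of the three parity classes.

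This gap propagates into your final step: once $c(n)\sim C\alpha^{n}$ with no $1/n$ factor, the singularity-analysis argument no longer applies to the full series, since coefficients $\sim C\alpha^{n}$ are perfectly consistent with rationality. The paper circumvents this by splitting the conjugacy growth series by geodesic type: the split-cyclic types contribute a \emph{rational} series with coefficients $\sim\alpha^{n}$ plus an error term of strictly smaller growth rate, while the standard (genuinely necklace-like) types contribute a series with coefficients $\sim\alpha^{n}/n$ at the same dominant rate $\alpha$; transcendence of the sum then follows by applying \cite[Thm. D]{Flajolet1987} to the latter summand only. To repair your argument you would need to (i) isolate the split-cyclic types and show their generating function is rational, and (ii) verify that some family counted up to honest cyclic permutation still contributes $\Theta(\alpha^{n}/n)$ at the top growth rate --- which is essentially the content of Propositions \ref{prop:oddA}--\ref{prop:oddBC} and Theorems \ref{thm:BS 22} and \ref{thm:BS(2k,2k)}.
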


An immediate application of \cref{main result} is that the conjugacy growth series for the braid group $B_{3}=G(3)$ is transcendental, with respect to the free product generating set. Whilst conjugacy growth has been previously studied in braid groups \cite{Ali2010, XU1992}, we believe this is the first result which provides information about its generating function. Since $G(2p)=\mathrm{BS}(p,p)$, we also get transcendental conjugacy growth series for a new class of Baumslag-Solitar groups with respect to their standard generating sets (see \cref{defn: gen set free product}).
\comm{
A second application of \cref{main result} concerns Artin groups of XXL-type, where all edge labellings are $\geq 5$ (see Section \ref{sec:Artin}). The XXL-Artin groups 
have a contracting element (rank 1 isometry while acting on a specific CAT(0) space) \cite[Theorem 1.4]{Haettel2022} if their rank is $\geq 3$. The existence of a contracting element gives Proposition \ref{prop:rank least 3} via \cite[Theorem 1.7]{GekhtYang} for rank $\geq 3$ Artin groups; this together with \cref{main result} can be phrased as:
\begin{restatable*}{cor}{XXLtype}\label{thm:XXL type trans}
    The conjugacy growth series for Artin groups of XXL-type is transcendental, with respect to some generating set.
\end{restatable*}}

The main tool used in \cite{AC2017, CiobanuE2020} to prove that the conjugacy growth series of a group is transcendental was to show that conjugacy growth has asymptotics of the form $\sim \frac{\alpha^n}{n}$ (see \cref{def:eq}), where $s(n)\sim \alpha^n$ is the (standard) growth of elements. Sequences with asymptotics of the form $\sim \frac{\alpha^n}{n}$ have transcendental generating functions by \cite[Thm. D]{Flajolet1987}. However, as the result below shows, we don't get this behaviour here  in all cases, so a more involved argument to find the conjugacy representatives and to count them is needed. In particular, our analysis shows (collecting the results from Section \ref{subsec:odd} and \ref{sec:even}), that
\begin{restatable*}{prop}{asymptotics}\label{prop:asymptotics}
    The asymptotics of the conjugacy growth with respect to the free product generating set in a dihedral Artin group $G(m)$ are given by
 \begin{equation*}
c_{G(m),\{x,y\}}(n)\sim \begin{cases}
              \alpha^n & \text{if } m \text{ odd or } m=4k, k \geq 1 \\
             \frac{\alpha^n}{n}  & \text{if } m=4k+2, k \geq 1 ,
       \end{cases} \quad
\end{equation*}
where $\alpha$ is the standard growth rate of $G(m)$.
\end{restatable*}
Finally, we consider the language $\mathsf{ConjGeo}(G(m),X)$ of conjugacy geodesics for dihedral Artin groups (see Section \ref{subsec:conjgeo}). This is known to be regular (that is, recognised by a finite state automaton) with respect to the standard Artin and Garside generators \cite[Cor. 3.9, Thm. 3.15]{Ciobanu2016}, but regularity is not necessarily preserved under different generating sets (see for example \cite[Section 5]{Ciobanu2016}). We provide a third case of regularity, with respect to the free product generating set.
\begin{restatable*}{thm}{conjgeos}\label{conjgeo result}
    The language $\mathsf{ConjGeo}(G(m),X)$ is regular for dihedral Artin groups, with respect to the free product generating set $X=\{x,y\}$.
\end{restatable*}
To prove this result, we first study the \emph{permutation conjugator length function} $\mathrm{PCL}_{G,X}$, defined by Antol\'{i}n and Sale in \cite{Antolin2016}. This function is of interest for groups with (sub) linear time complexity for the conjugacy problem, and so dihedral Artin groups provide an interesting candidate to study \cite[Prop. 3.1]{Holt2015}. The PCL function has been shown to be constant in hyperbolic groups, and we provide another example where the PCL function is constant.

\begin{restatable*}{prop}{PCLconstant}\label{thm:PCL constant}
    Let $G(m)$ be a dihedral Artin group, and let $X=\{x,y\}$ be the free product generating set. Then $\mathrm{PCL}_{G(m),X}$ is constant. 
\end{restatable*}
Secondly, we establish the \emph{falsification by fellow traveler property} (FFTP) in dihedral Artin groups. We note that the FFTP depends on the generating set \cite[Prop. 4.1]{Neumann1995}, and that the FFTP has been shown for Artin groups of large type, with respect to the standard generating set \cite[Thm. 4.1]{Holt2012}, and Garside groups, with respect to the Garside generators \cite[Thm. 2.9]{Holt2010}. 
We add a further positive result with respect to the free product generating set. 

\begin{restatable*}{thm}{FFTP}\label{prop:FFTP}
    Let $G(m)$ be a dihedral Artin group, and let $X$ be the free product generating set. Then $(G(m),X)$ satisfy the FFTP.
\end{restatable*}
\cref{conjgeo result} follows from 
\cref{thm:PCL constant} and \cref{prop:FFTP}, using \cite[Prop. 2.3]{Antolin2016}.

The structure of this paper is as follows. We provide information on conjugacy growth functions and Artin groups in \cref{sec:prel}, where we define the free product generating set in dihedral Artin groups. In \cref{sec:odd conj geos} and \cref{even conj geos}, we describe conjugacy geodesic representatives, by splitting our cases for when the edge labelling of our graph is either odd or even. In \cref{growth} we give asymptotics for $c(n)$ and prove \cref{main result}\comm{and \cref{thm:XXL type trans}}. Finally, in \cref{sec:conj geos language}, we use results from \cref{sec:odd conj geos} and \cref{even conj geos} to study the PCL function and the FFTP property, to prove \cref{thm:PCL constant} and \cref{prop:FFTP}. 

\section{Preliminaries}\label{sec:prel}
All groups in this paper are finitely generated, and all finite generating sets are inverse-closed.  
\subsection{Conjugacy geodesics and growth}\label{subsec:conjgeo}
We fix a group $G$ and a finite generating set $X$ of $G$. For words $u,v \in X^{\ast}$, we use $u=v$ to denote equality of words, and $u =_{G} v$ to denote equality of the group elements represented by $u$ and $v$. The \emph{(word) length} of an element $g\in G$, denoted by $|g|$, is the length of a shortest word in $X$ that represents $g$, i.e.\ $|g| = \min \{ |w| \mid w\in X^*, w =_G g\}$. In this case, we say $w$ is a \emph{geodesic word}, or simply a geodesic. If there exists a unique word $w$ of minimal length representing $g$, then we say $w$ is a \emph{unique geodesic}. Otherwise $w$ is a non-unique geodesic.
\comm{
For words $u,v \in X^{\ast}$, we use $u = v$ to denote equality of words, and $u =_{G} v$ to denote equality of the groups elements represented by $u$ and $v$. }

We will often write $g \sim h$ to denote that $g$ and $h$ are conjugate, and write $[g]$ for the conjugacy class of $g$. The \emph{length} of $[g]$, denoted by $|[g]|$, is the shortest length among all elements in $[g]$, i.e.\ $|[g]| = \min \{ |h| \mid h \sim g \}$. A word $w$ is a \emph{conjugacy geodesic} for $[g]$ if it is a geodesic, and if it moreover represents an element of shortest length in $[g]$. We define the \emph{conjugacy geodesic language} of $G$, with respect to $X$, where $\pi \colon X^{\ast} \rightarrow G$ is the natural projection, as 
\[ \mathsf{ConjGeo}(G,X) := \{ w \in X^{\ast} \; | \; l(w) = |[\pi(w)]| \}.
\]
We define the \emph{cumulative conjugacy growth function} of $G$ with respect to $X$ to be the number of conjugacy classes whose length is $\leq n$, the \emph{strict conjugacy growth function}, denoted as $c(n) = c_{G,X}(n)$, to be the number of conjugacy classes of length $=n$, i.e.\ 
\[ c(n) = \#\{[g] \mid |[g]| = n\},\]
and the \emph{conjugacy growth rate} as $\lim_{n \rightarrow \infty}\sqrt[n]{c_{G,X}(n)}$.
For ease of computation we shall work only with the strict version, and call that the \emph{conjugacy growth function}. 
The \emph{conjugacy growth series} $C(z) = C_{G,X}(z)$ is defined to be the (ordinary) generating function of $c(n)$, so
\[ C(z) = \sum\limits_{n = 0}^{\infty} c(n) z^n. \]
All results in this paper can be easily extended to the cumulative version of the conjugacy growth function and series (see \cite{AC2017}).

We call a formal power series $f(z)$ \emph{rational} if it can be expressed (formally) as the ratio of two polynomials with integral coefficients, or equivalently, the coefficients of $f(z)$ satisfy a finite linear recursion. In the language of polynomial rings, this is to say $f(z) \in \Q(z)$. Furthermore, $f(z)$ is \emph{irrational} if it is not rational. 
A formal power series is \emph{algebraic} if it is in the algebraic closure of $\Q(z)$, i.e.\ it is the solution to a polynomial equation with coefficients from $\Q(z)$. It is called \emph{transcendental} if it is not algebraic.
\subsection{Artin groups} \label{sec:Artin}
\begin{defn}
    Let $\Gamma$ be a finite simple graph, with vertex set $V(\Gamma)$ and with edges labelled by integers $m_{i,j} \in \Z_{\geq 2}$. The Artin group $A(\Gamma)$ is the group defined by the following presentation:
    \[ A(\Gamma) = \langle V(\Gamma) \; | \; _{m_{i,j}}(a_i,a_j) =  {}_{m_{i,j}}(a_j,a_i) \; \text{if the edge} \; \{a_i,a_j\} \; \text{is labelled} \; m_{i,j} \rangle,
    \]
    where $_{m_{i,j}}(a,b)$ is the word $abab \dots$ of length $m_{i,j}$. If $|V(\Gamma)| = 2$, we say $A(\Gamma)$ is a \emph{dihedral Artin group}.
    \comm{
    The Artin group $A(\Gamma)$ is 
    \begin{itemize}
        \item[(i)] of \emph{large type} if all edge labels $m_{i,j}$ are $\geq 3$,
        \item[(ii)] of \emph{extra large type} (XL) if all edge labels $m_{i,j}$ are $\geq 4$, and
        \item[(iii)] of \emph{extra extra large type} (XXL) if all edge labels $m_{i,j}$ are $\geq 5$. 
    \end{itemize}}    
\end{defn}
We will refer to $V(\Gamma)$ as the standard (Artin) generating set. For dihedral Artin groups, the presentation with respect to the standard generating set is
\begin{equation}\label{eq:all groups}
    G(m) = \langle a,b \; | \; _{m}(a,b) =  {}_{m}(b,a) \rangle.
\end{equation}
If $m=2$, then $G(m)$ is the free abelian group of rank two, and its conjugacy growth series is rational \cite{Evetts2019}, so we assume $m \geq 3$ throughout this paper.

We now define our alternative generating set for any dihedral Artin group, which we will use to study conjugacy geodesics. 

\begin{defn}\label{defn: gen set free product}
    Let $G(m)$ be the dihedral Artin group with presentation (\ref{eq:all groups}). For $m$ odd,
    \[ G(m) \cong \langle x,y \; | \; x^{2} = y^{m} \rangle,
    \]
    by setting $x = {}_{m}(a,b), y = ab$. For $m$ even,
    \[ G(m) \cong \langle x,y \; | \; y^{-1}x^{p}y = x^{p} \rangle = \mathrm{BS}(p,p),
    \]
    where $p= \frac{m}{2}$, by setting $x = ab, y = a$; $\mathrm{BS}(p,p)$ denotes a Baumslag-Solitar group. In both cases we refer to $X = \{x,y\}$ as the \emph{free product generating set}.
\end{defn}
To compute the conjugacy growth of $G(m)$, with respect to the free product generating set, we consider conjugacy geodesic representatives in the case where $m$ is odd or even in turn. 

\section{Odd dihedral Artin groups: G$(2k+1)$}\label{sec:odd conj geos}
We first consider $G(m)$ for $m$ odd as given in \cref{defn: gen set free product}, i.e.
$ G(m) \cong \langle x,y \; | \; x^{2} = y^{m} \rangle.$
Let $\Delta = x^{2}$ and note the centre of $G(m)$ is generated by $\Delta$. We often refer to $\Delta^{c}$, $c \in \Z$, as the \emph{Garside element/part} of a word. For notation, we let $m = 2k+1$.  
\subsection{Classification of geodesics}
When considering geodesics, we use the normal forms derived in \cite{Fujii2018}. An independent classification of geodesic normal forms was given in \cite[Section 2.2]{GILL1999}. We collect those results here while additionally justifying when the geodesics are unique per element or not. 

All types of geodesics are presented in Table \ref{table:oddDA}, and the reader may skip this section and consult the overview in Table \ref{table:oddDA} directly.
\begin{prop}\cite[Lemma 3.1, p. 484]{Fujii2018}\label{prop:geos}
    Let $g \in G(m)$ and let $w$ be a geodesic representative for $g$. Then $w$ can be represented as $w = x^{a_{1}}y^{b_{1}}\dots x^{a_{\tau}}y^{b_{\tau}}\Delta^{c}$, where $w$ satisfies one of the following mutually exclusive conditions (where $\tau \in \Z_{>0}$):
\begin{enumerate}
        \item[$(1)$] $\begin{cases}
        c>0, \; 0 \leq a_{i} \leq 1 \; (1 \leq i \leq \tau), & a_{i} \neq 0 \; (2 \leq i \leq \tau),\\
        -(k-1) \leq b_{i} \leq k+1 \; (1 \leq i \leq \tau), & b_{i} \neq 0 \; (1 \leq i \leq \tau - 1).
        \end{cases}$
        \item[$(2)$] $\begin{cases}
            c<0, \; -1 \leq a_{i} \leq 0 \; (1 \leq i \leq \tau), & a_{i} \neq 0 \; (2 \leq i \leq \tau), \\
            -(k+1) \leq b_{i} \leq k-1 \; (1 \leq i \leq \tau), & b_{i} \neq 0 \; (1 \leq i \leq \tau - 1).
        \end{cases}$
        \item[$\left(3^{+}\right)$] 
    $\begin{cases}
       c=0, \; 0 \leq a_{i} \leq 1 \; (1 \leq i \leq \tau), & a_{i} \neq 0 \; (2 \leq i \leq \tau),\\
        -(k-1) \leq b_{i} \leq k+1 \; (1 \leq i \leq \tau), & b_{i} \neq 0 \; (1 \leq i \leq \tau - 1).
    \end{cases}$
    \item[$(3^{-})$] 
    $\begin{cases}
        c=0, \; -1 \leq a_{i} \leq 0 \; (1 \leq i \leq \tau), & a_{i} \neq 0 \; (2 \leq i \leq \tau), \\
            -(k+1) \leq b_{i} \leq k-1 \; (1 \leq i \leq \tau), & b_{i} \neq 0 \; (1 \leq i \leq \tau - 1).
    \end{cases}$
    \item[$(3^{+}\cap 3^{-})$] $w = y^{b}$ where $-(k-1) \leq b \leq k-1$.  
    \item[$\left(3^{0}\right)$] 
    $\begin{cases}
        c=0, \; -1 \leq a_{i} \leq 1 \; (1 \leq i \leq \tau), & a_{i} \neq 0 \; (2 \leq i \leq \tau), \\
        -(k+1) \leq b_{i} \leq k+1 \; (1 \leq i \leq \tau), & b_{i} \neq 0 \; (1 \leq i \leq \tau -1).
     \end{cases}$
    \end{enumerate}
\end{prop}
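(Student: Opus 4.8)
The plan is to first pin down the shape $w=x^{a_1}y^{b_1}\cdots x^{a_\tau}y^{b_\tau}\Delta^{c}$ of a normal form for an arbitrary element, then to cut the exponent ranges down using a short list of length-reducing rewriting moves supplied by the defining relation, and finally to split into cases according to the sign of the Garside exponent $c$. This is essentially Fujii's Lemma~3.1, so I only indicate the structure of the argument below, together with the incremental point the paper promises (uniqueness).

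\emph{Shape of the normal form.} Since $m=2k+1$ is odd, $\gcd(2,m)=1$, so $\Delta=x^{2}=y^{m}$ is central and $G(m)=\langle x\rangle *_{\langle\Delta\rangle}\langle y\rangle$ is an amalgam of two infinite cyclic groups over $\langle\Delta\rangle\cong\Z$; equivalently $G(m)/\langle\Delta\rangle\cong\Z/2\Z * \Z/m\Z$. Taking the free-product normal form there (alternating nontrivial syllables, with the symmetric representatives $y^{j}$, $-k\le j\le k$, for $\Z/m\Z$) and lifting to $G(m)$ shows that every $g$ is represented by some $x^{a_1}y^{b_1}\cdots x^{a_\tau}y^{b_\tau}\Delta^{c}$ in which the interior syllables $a_2,\dots,a_\tau$ and $b_1,\dots,b_{\tau-1}$ are nonzero while the end syllables $a_1,b_\tau$ may vanish. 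Alternatively, starting from an arbitrary geodesic word one collects its maximal $x$- and $y$-runs and commutes every complete $x^{2}$ and $y^{m}$ to the right to build $\Delta^{c}$, landing on a geodesic of this shape; it then suffices to determine which exponent patterns can occur in a geodesic, the length of $x^{a_1}y^{b_1}\cdots\Delta^{c}$ being $\sum|a_i|+\sum|b_i|+2|c|$.

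\emph{Moves and exponent bounds.} The relation yields the substitutions $x^{\pm2}\leftrightarrow\Delta^{\pm1}$ and $y^{\pm m}\leftrightarrow\Delta^{\pm1}$, hence for a syllable adjacent to the Garside part $y^{b}\Delta^{c}=y^{b-m}\Delta^{c+1}=y^{b+m}\Delta^{c-1}$ and $x^{a}\Delta^{c}=x^{a-2}\Delta^{c+1}$, together with merging moves whenever an interior $a_i$ or $b_i$ becomes $0$. Comparing the two sides of each move and using $m=2k+1$ gives the bounds: if $c>0$ a geodesic must have $0\le a_i\le1$ and $-(k-1)\le b_i\le k+1$ (for instance $y^{-k}\Delta^{c}$ is beaten by $y^{k+1}\Delta^{c-1}$ and $x^{-1}\Delta^{c}$ by $x\Delta^{c-1}$, whereas $y^{k+1}$ and $y^{-(k-1)}$ are stable); the case $c<0$ is the mirror image, with $-1\le a_i\le0$ and $-(k+1)\le b_i\le k-1$; and $c=0$ permits the symmetric windows $-1\le a_i\le1$, $-(k+1)\le b_i\le k+1$. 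Finally one organises the $c=0$ words by the sign pattern of the $a_i$: all $\ge0$, all $\le0$, mixed, or no $x$-syllables at all (forcing $\tau=1$, $a_1=0$, i.e.\ $w=y^{b}$), which is precisely what produces $(3^{+})$, $(3^{-})$, $(3^{0})$ and $(3^{+}\cap 3^{-})$; together with (1) for $c>0$ and (2) for $c<0$ this is the promised list.

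\emph{Main obstacle and the uniqueness addendum.} I expect the real work to be the bookkeeping in the last two steps rather than any individual computation: one must check that this finite set of moves is \emph{complete} (no other rewriting shortens a word), that the $c=0$ analysis is exhaustive and the families disjoint when read as a partition by which conditions hold, and that the degenerate short words land in the right place. For the uniqueness statement, one then lists, family by family, the remaining length-preserving moves: interior merging is excluded by the nonvanishing conditions, so only boundary trades such as $y^{k+1}\Delta^{c}\leftrightarrow y^{-k}\Delta^{c+1}$ or a $y$-syllable sliding across the center survive, and a direct length comparison (again using $m=2k+1$) records exactly which geodesics fail to be unique. Since the classification itself is Fujii's lemma, the efficient route is to cite it and carry out only this uniqueness analysis in full.
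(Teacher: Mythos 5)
The paper does not actually prove this proposition: it is quoted verbatim from Fujii (\cite[Lemma 3.1]{Fujii2018}), and the surrounding text only adds the uniqueness analysis, which is deferred to \cref{lem:non unique} and \cref{30 cases}. Your closing recommendation --- cite the lemma and do only the uniqueness work in full --- is therefore exactly what the paper does, so at that level the approaches coincide. Your sketch of how Fujii's argument runs is also structurally right: the amalgam $\langle x\rangle \ast_{\langle\Delta\rangle}\langle y\rangle$ with central $\Delta=x^2=y^m$, the syllable form with only the end syllables allowed to vanish, and the boundary length comparisons (e.g.\ $y^{-k}\Delta^{c}=y^{k+1}\Delta^{c-1}$ saves one letter when $c>0$, while $y^{k+1}\Delta^{c}=y^{-k}\Delta^{c+1}$ costs one, and $x^{-1}\Delta^{c}=x\Delta^{c-1}$ saves two) all check out and do yield the stated exponent windows as \emph{necessary} conditions.

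The genuine gap, which you flag yourself but do not close, is the converse direction: showing that a word satisfying the listed conditions, to which no move applies, really is geodesic --- equivalently, that your finite list of moves is complete and that $\sum|a_i|+\sum|b_i|+2|c|$ computes the true word length. That is the substantive content of Fujii's lemma (his Prop.\ 3.4 in the paper's numbering is invoked precisely for geodesicity of types $(1)$, $(2)$, $(3^{\pm})$, and $(3^{+}\cap 3^{-})$, and \cref{30 cases} handles $(3^{0})$ separately), and nothing in the sketch substitutes for it. One smaller bookkeeping slip: $(3^{+}\cap 3^{-})$ is literally the overlap of the conditions $(3^{+})$ and $(3^{-})$, namely $w=y^{b}$ with $|b|\le k-1$; it is not the class of all $x$-free words, since $y^{\pm k}$ and $y^{\pm(k+1)}$ are $x$-free yet land in exactly one of $(3^{+})$ or $(3^{-})$. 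As written your partition of the $c=0$ case by sign pattern of the $a_i$ would misfile those words, so the mutual exclusivity claim needs the intersection taken as stated rather than as ``no $x$-syllables''.
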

Later in the paper we will use $(3) = (3^{+}) \sqcup (3^{-}) \sqcup (3^{+} \cap 3^{-}) \sqcup \left(3^{0}\right)$. 
By \cite[Prop. 3.4]{Fujii2018}, all words of the form $(1) \sqcup (2) \sqcup (3^{+}) \sqcup (3^{-}) \sqcup \left(3^{+} \cap 3^{-}\right)$ are geodesic, so it remains to check when $w \in \left(3^{0}\right)$ is geodesic. The proof of the following result can be found in \cref{append1:odd}.

\begin{prop}\label{30 cases}
    If $w = x^{a_{1}}y^{b_{1}}\dots x^{a_{\tau}}y^{b_{\tau}}\Delta^{c} \in \left(3^{0}\right)$ is geodesic, then $c=0$ and $w$ satisfies one of the following mutually exclusive conditions (where $\tau \in \Z_{>0}$):
    \begin{enumerate}
    \item[$\left(3^{0+}\right)$] 
    $\begin{cases}
        0 \leq a_{i} \leq 1 \; (1 \leq i \leq \tau), & a_{i} \neq 0 \; (2 \leq i \leq \tau),\\ 
        -k \leq b_{i} \leq k+1 \; (1 \leq i \leq \tau), & b_{i} \neq 0 \; (1 \leq i \leq \tau -1), \; \text{there exists at least one} \; y^{-k} \; \text{term}.
    \end{cases}$
    \item[$\left(3^{0-}\right)$]
    $\begin{cases}
        -1 \leq a_{i} \leq 0 \; (1 \leq i \leq \tau), & a_{i} \neq 0 \; (2 \leq i \leq \tau),\\ 
        -(k+1) \leq b_{i} \leq k \; (1 \leq i \leq \tau), & b_{i} \neq 0 \; (1 \leq i \leq \tau -1), \; \text{there exists at least one} \; y^{k} \; \text{term}.
    \end{cases}$
    \item[$\left(3^{0*}\right)$]
    $\begin{cases}
        -1 \leq a_{i} \leq 1 \; (1 \leq i \leq \tau), & a_{i} \neq 0 \; (2 \leq i \leq \tau), \; \text{there exist both} \; x, x^{-1} \; \text{terms},\\ 
        -k \leq b_{i} \leq k \; (1 \leq i \leq \tau), & b_{i} \neq 0 \; (1 \leq i \leq \tau -1).
    \end{cases}$
\end{enumerate}
\end{prop}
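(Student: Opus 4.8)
The plan is to distill two length-reducing rewriting moves out of the centrality of $\Delta = x^{2} = y^{2k+1}$, and then run a short case split according to which of the letters $x, x^{-1}$ occur as syllables of $w$. Note that there is nothing to prove about $c$: membership in $\left(3^{0}\right)$ already stipulates $c=0$, so it suffices to place $w$ into one of the three refined types.

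The key step is the following claim: a word in the syllable form of \cref{prop:geos} that contains both an $x$-syllable and a $y^{-(k+1)}$-syllable is not geodesic; symmetrically, one containing both an $x^{-1}$-syllable and a $y^{k+1}$-syllable is not geodesic. For the first, I would use the identities $y^{-(k+1)} = \Delta^{-1}y^{k}$ and $x = \Delta\, x^{-1}$: writing the word as $u\, y^{-(k+1)}\, v\, x\, w'$ (the two distinguished syllables in either order, possibly at an end) and substituting both identities, the two central factors $\Delta^{\pm1}$ can be slid together and cancelled, leaving $u\, y^{k}\, v\, x^{-1}\, w'$. This represents the same element of $G(m)$ and is one letter shorter, since the $y^{-(k+1)}$ block of $k+1$ letters shrinks to a $y^{k}$ block of $k$ letters while $x\mapsto x^{-1}$ is length-neutral. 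The second statement is the mirror image, via $y^{k+1} = \Delta\, y^{-k}$ and $x^{-1} = \Delta^{-1}x$.

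Granting the claim, let $w = x^{a_{1}}y^{b_{1}}\cdots x^{a_{\tau}}y^{b_{\tau}}$ be geodesic in $\left(3^{0}\right)$. Because the cases of \cref{prop:geos} are mutually exclusive, $w$ fails the inequalities defining $(3^{+})$ and those defining $(3^{-})$; as $-(k+1)\le b_{i}\le k+1$ already holds, this means some $a_{i}=-1$ or some $b_{i}\in\{-k,-(k+1)\}$, and also some $a_{i}=1$ or some $b_{i}\in\{k,k+1\}$. Now: if $w$ has an $x$-syllable but no $x^{-1}$-syllable, the claim excludes any $y^{-(k+1)}$-syllable, so all $b_{i}\ge -k$, and failure of $(3^{+})$ then forces some $b_{i}=-k$, placing $w$ in $\left(3^{0+}\right)$. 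If $w$ has an $x^{-1}$-syllable but no $x$-syllable, the mirror argument gives $\left(3^{0-}\right)$. If $w$ has both $x$ and $x^{-1}$, the claim in both directions kills $y^{k+1}$- and $y^{-(k+1)}$-syllables, so $-k\le b_{i}\le k$ and $w$ is of type $\left(3^{0*}\right)$. Finally, if $w$ has neither $x$ nor $x^{-1}$ then $\tau=1$ and $a_{1}=0$, so $w = y^{b_{1}}$ with $|b_{1}|\le k+1$; but then failure of $(3^{+})$ would force $b_{1}\in\{-k,-(k+1)\}$ and failure of $(3^{-})$ would force $b_{1}\in\{k,k+1\}$, which is impossible for $k\ge1$, so this case cannot occur. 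The three output types are pairwise incompatible by their $x/x^{-1}$ content, so exactly one of them holds.

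I expect the only real friction to be in pinning down the key claim rigorously: one must check that, in each configuration of the two distinguished syllables (interior and separated, adjacent, or touching an endpoint), the substitution really produces an honest word in $X^{\ast}$ of length $|w|-1$, the crux being that $\Delta^{\pm1}$ is central and may be transported past the intervening syllables without cost. Everything after that is routine bookkeeping with the inequalities of \cref{prop:geos}.
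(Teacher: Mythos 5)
Your proof is correct, and its overall architecture matches the paper's: establish that an $x$-syllable excludes a $y^{-(k+1)}$-syllable (and symmetrically $x^{-1}$ excludes $y^{k+1}$), then split on which of $x,x^{-1}$ occur and use mutual exclusivity with $(3^{+})$ and $(3^{-})$ to force the presence of a $y^{-k}$ (resp.\ $y^{k}$) term. The one genuine difference is how the key exclusion is obtained: the paper simply invokes the cited inequalities of \cref{lem:geo conditions} (namely $\Pos_{x}(w)+\Neg_{y}(w)\leq k+1$ and $\Pos_{y}(w)+\Neg_{x}(w)\leq k+1$, quoted from Fujii), whereas you reprove exactly this fact from scratch via the substitutions $y^{-(k+1)}=\Delta^{-1}y^{k}$ and $x=\Delta x^{-1}$ and the centrality of $\Delta$, producing an explicit word of length $|w|-1$. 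This buys self-containedness at the cost of a small amount of case-checking (adjacent syllables, syllables at the ends) that you correctly flag; the length count ($k+1$ letters shrinking to $k$, with $x\mapsto x^{-1}$ length-neutral and the two central factors cancelling) is right, and the rest of your bookkeeping — including the observation that $c=0$ is already part of the definition of $\left(3^{0}\right)$, the derivation of the mandatory $y^{\mp k}$ term from the failure of $(3^{\pm})$, and the elimination of the $\tau=1$, $a_{1}=0$ case — is sound and in fact spelled out in more detail than the paper's own one-line conclusion.
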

From (\cite{Fujii2018}, p. 489), all words in $(1) \sqcup (2) \sqcup (3^{+}) \sqcup (3^{-}) \sqcup (3^{+} \cap 3^{-})$ are unique geodesics. 
To determine which words in $\left(3^{0}\right)$ are unique geodesics we use Lemma \ref{lem:non unique} to establish when elements in $(3)$ are not uniquely geodesic.
\begin{defn}
For $w = x^{a_{1}}y^{b_{1}}\dots x^{a_{\tau}}y^{b_{\tau}} \in (3)$, define
\begin{alignat*}{2}
    \Pos_{x}(w) &:= \mathrm{max}\{a_{i} \; : \; a_{i} \geq 0, 1 \leq i \leq \tau \}, \quad \Neg_{x}(w) &&:= \mathrm{max}\{-a_{i} \; : \; a_{i} \leq 0, 1 \leq i \leq \tau \}, \\
    \Pos_{y}(w) &:= \mathrm{max}\{b_{i} \; : \; b_{i} \geq 0, 1 \leq i \leq \tau \}, \quad \Neg_{y}(w) &&:= \mathrm{max}\{-b_{i} \; : \; b_{i} \leq 0, 1 \leq i \leq \tau \}.
\end{alignat*} 
\end{defn}
\begin{lemma}\label{lem:non unique}\cite[Prop. 3.8]{Fujii2018}
Let $w \in (3)$ be geodesic. Then $w$ is a non-unique geodesic representative if at least one of the following conditions holds:
(1) Both $x$ and $x^{-1}$ terms exist in $w$, or
   (2) $\Pos_{y}(w) + \Neg_{y}(w) = m$.
\end{lemma}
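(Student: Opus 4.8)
The plan is to show, in each of the two cases, that the defining relation $x^{2}=y^{m}$ lets us rewrite $w$ into a \emph{different} word $w'$ of the \emph{same} length representing the same element; since $w$ is geodesic this forces $|\pi(w')|=|w'|=|w|$, so $w'$ is also geodesic, and $w$ is therefore a non-unique geodesic. The mechanism throughout is that $\Delta=x^{2}=y^{m}$ is central, which supplies the equal-length single-block substitution $x\leftrightarrow\Delta x^{-1}$ (both of length $1$) and the length-shifting ones $y^{b}\leftrightarrow y^{b-m}\Delta$ and $y^{b}\leftrightarrow y^{b+m}\Delta^{-1}$. Performing one substitution that introduces a $\Delta$ together with one that introduces a $\Delta^{-1}$ produces a word for the same element (the $\Delta^{\pm1}$ cancel by centrality), and if the two are chosen with total length change $0$, that word is the new geodesic we want.

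For case $(1)$, recall that every geodesic in $(3)$ has $|a_\ell|\le 1$, so "both an $x$ and an $x^{-1}$ term exist" means exactly that some $a_i=1$ and some $a_j=-1$, and these are single letters. Writing $w=u\,x\,v\,x^{-1}\,u'$ with the displayed letters at positions $i$ and $j$ (and symmetrically if $j<i$), I substitute $x=\Delta x^{-1}$ and $x^{-1}=\Delta^{-1}x$; the two central factors cancel, giving $w=_G u\,x^{-1}\,v\,x\,u'=:w'$. Then $|w'|=|w|$ and $w'$ differs from $w$ in precisely the two flipped letters, so $w'\neq w$, as required.

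For case $(2)$, suppose $\Pos_y(w)+\Neg_y(w)=m$. By \cref{prop:geos}, every geodesic in $(3)$ has $-(k+1)\le b_\ell\le k+1$, hence $\Pos_y(w),\Neg_y(w)\le k+1<m$; in particular neither can be $0$ (otherwise the other would exceed $k+1$), so $w$ contains a block $y^{p}$ with $p=\Pos_y(w)\ge1$ and a \emph{distinct} block $y^{-q}$ with $q=\Neg_y(w)\ge1$, where $p+q=m$. Writing $w=u\,y^{p}\,v\,y^{-q}\,u'$ (or with the two blocks in the other order, the argument being symmetric) and using $p+q=m$, so that $y^{p-m}=y^{-q}$ exactly, I substitute $y^{p}=y^{-q}\Delta$ and $y^{-q}=y^{p}\Delta^{-1}$; the central factors cancel and $w=_G u\,y^{-q}\,v\,y^{p}\,u'=:w'$. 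The two replacements change the total length by $(q-p)+(p-q)=0$, so $|w'|=|w|$, and $w'\neq w$ since $p\neq -q$. Hence $w$ is a non-unique geodesic.

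The only insight the argument really needs is to pair a "$+\Delta$" substitution with a "$-\Delta$" one so that the central parts cancel; after that it is a direct computation with the single relation $\Delta=x^{2}=y^{m}$. The one point requiring a little care is case $(2)$, where one must invoke the exponent bound from \cref{prop:geos} to guarantee that the positive and negative $y$-extremes are realised by two genuine, distinct blocks whose sizes sum to exactly $m$ — which is precisely what makes the two-block swap length-neutral. I do not expect any serious obstacle beyond this bookkeeping.
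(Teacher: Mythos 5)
Your proof is correct, and it is essentially the argument the paper relies on: the lemma itself is only cited from Fujii, but the two length-neutral swaps you construct from the centrality of $\Delta=x^2=y^m$ are exactly the rewrite rules (\ref{rewrite pairs}) and (\ref{eq:a non unique}) that the paper states immediately afterwards (your case (2) reduces, given the bound $|b_i|\le k+1$ from \cref{prop:geos}, to the single pair $\{k,k+1\}$ appearing there). The one point you rightly flag --- that the positive and negative $y$-extremes are realised by two distinct blocks summing to exactly $m$ --- is handled correctly.
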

We consider the conditions from \cref{lem:non unique} for each type from \cref{30 cases} in turn. If $w \in \left(3^{0+}\right)$, then the only rewrite rule we can apply to non-unique geodesics is 
\begin{equation}\label{eq:a non unique}
    sy^{-k}ty^{k+1}z =_{G} sy^{k+1}ty^{-k}z,
\end{equation}
for some $s,t,z \in X^{\ast}$. In order to apply this rule, we need at least one $y^{k+1}$ term in $w$. Therefore the set $\left(3^{0+}\right)$ splits into two disjoint sets, which we denote by $\left(3^{0+}U\right) \sqcup \left(3^{0+}N\right)$, where $\left(3^{0+}U\right)$ is the set of unique geodesics ($-k \leq b_{i} \leq k$), and $\left(3^{0+}N\right)$ is the set of non-unique geodesics, i.e. there exist both $y^{-k}$ and $y^{k+1}$ terms.
\begin{defn}\label{rmk:alternative odd a}
    Let $\tau = \tau_{1} + \tau_{2}$. Any Type $\left(3^{0+}\right)$ geodesic can be written as 
    \begin{equation}\label{eq:alt odd 3a}
    w = A_{1}y^{-k}A_{2}y^{-k}\dots A_{\tau_{1}}y^{-k}A_{\tau_{1} + 1}y^{k+1}A_{\tau_{1}+2}y^{k+1}\dots y^{k+1}A_{\tau_{1} + \tau_{2} + 1},
\end{equation}
    where each $A_{s}$ $(1 \leq s \leq \tau + 1)$ are reduced words over $x,y$, with any maximal subword of the form $y^{\beta}$ in $ A_{s}$ satisfying $\beta \in [-(k-1), k] \cap \mathbb{Z}_{\neq 0}$. Here $\tau_{1} \geq 1, \tau_{2} \geq 0$, $A_{1}$ ($A_{\tau + 1}$ resp.) is either empty or ends (starts) with $x$, and $A_{s}$ starts and ends with $x$ $(2 \leq s \leq \tau)$.

     Type $\left(3^{0+}U\right)$ is the set of all Type $\left(3^{0+}\right)$ geodesics where $\tau_{2} = 0$, i.e. of the form 
     \[         w = A_{1}y^{-k}A_{2}y^{-k}\dots A_{\tau_{1}}y^{-k}A_{\tau_{1}+1},
     \]
    and Type $\left(3^{0+}N\right)$ is the set of all remaining Type $\left(3^{0+}\right)$ geodesics, i.e. with $\tau_{2} \geq 1$ and 
    \[w =_{G} A_{1}y^{-k}A_{2}y^{-k}\dots A_{\tau_{1}}y^{-k}A_{\tau_{1} + 1}y^{k+1}A_{\tau_{1}+2}y^{k+1}\dots y^{k+1}A_{\tau_{1} + \tau_{2} + 1}.\]
\end{defn}
Similarly, for $\left(3^{0-}\right)$ words, which are negative counterparts to $\left(3^{0+}\right)$, 
 we can split $\left(3^{0-}\right)$ into two disjoint sets $\left(3^{0-}U\right) \sqcup \left(3^{0-}N\right)$, where $\left(3^{0-}U\right)$ is the set of unique geodesics ($-k \leq b_{i} \leq k$), and $\left(3^{0-}N\right)$ is the set of non-unique geodesics. 
\begin{defn}\label{rmk:alternative odd b}
    Let $\tau = \tau_{1} + \tau_{2}$. Any Type $\left(3^{0-}\right)$ geodesic can be written as
    \begin{equation}\label{eq:alt odd 3b}
    w = A_{1}y^{-(k+1)}A_{2}y^{-(k+1)}\dots A_{\tau_{1}}y^{-(k+1)}A_{\tau_{1}+1}y^{k}A_{\tau_{1}+2}y^{k}\dots y^{k}A_{\tau_{1}+\tau_{2}+1},
\end{equation}
    where each $A_{s}$ $(1 \leq s \leq \tau + 1)$ are reduced words over $x,y$, with any maximal subword of the form $y^{\beta}$ in $ A_{s}$ satisfying $\beta \in [-k, k-1] \cap \mathbb{Z}_{\neq 0}$. Here $\tau_{1} \geq 0, \tau_{2} \geq 1$, $A_{1}$ ($A_{\tau + 1}$ resp.) is either empty or ends (starts) with $x^{-1}$, and $A_{s}$ starts and ends with $x^{-1}$ $(2 \leq s \leq \tau)$.

     Type $\left(3^{0-}U\right)$ is the set of Type $\left(3^{0-}\right)$ geodesics with $\tau_{1} = 0$, so of the form 
     $A_{1}y^{k}A_{2}y^{k}\dots A_{\tau_{2}+1}y^{k},$
    and Type $\left(3^{0-}N\right)$ is the set of all remaining Type $\left(3^{0-}\right)$ geodesics, i.e. where $\tau_{1} \geq 1$, so of the form $A_{1}y^{-(k+1)}A_{2}y^{-(k+1)}\dots A_{\tau_{1}}y^{-(k+1)}A_{\tau_{1}+1}y^{k}A_{\tau_{1}+2}y^{k}\dots y^{k}A_{\tau_{1}+\tau_{2}+1}$. 
\end{defn}
Finally, all words in $\left(3^{0*}\right)$ are non-unique geodesics, since we can apply the rewrite rule
\begin{equation}\label{rewrite pairs}
    sxtx^{-1}z =_{G} sx^{-1}txz,
\end{equation}
for any $s,t,z \in X^{\ast}$ and pair of $x$ and $x^{-1}$. This is the only possible rewrite rule by \cref{lem:non unique}.
\begin{defn}\label{defn:30*}
    Let $\tau = \tau_{1}+\tau_{2}$, $a_i=\pm 1$, and define the following tuple
    \[ X = \left(x^{a_{1}}, x^{a_{2}}, \dots, x^{a_{\tau}}\right) =(\underbrace{x^{-1}, x^{-1}, \dots, x^{-1}}_{\tau_{1}}, \underbrace{x, x, \dots, x}_{\tau_{2}}).
    \]
    Any Type $\left(3^{0*}\right)$ geodesic can be written as a word $w$ respecting the $X$-tuple structure:
    \begin{align*}
        w &= x^{a_{1}}y^{b_{1}}x^{a_{1}}y^{b_{2}}\dots y^{b_{\tau_{1}}}x^{a_{\tau_{1}+1}}y^{b_{\tau_{1}+1}}x^{a_{\tau_{1}+2}}\dots x^{a_{\tau_{1}+\tau_{2}}}y^{b_{\tau_{1}+\tau_{2}}}\\
        &= x^{-1}y^{b_{1}}x^{-1}y^{b_{2}}\dots y^{b_{\tau_{1}}}xy^{b_{\tau_{1}+1}}x\dots y^{b_{\tau_{1}+\tau_{2}}}.
    \end{align*}
    
\end{defn}
\comm{
\begin{cor}\label{cor:overcount}
    Let $w = x^{a_{1}}y^{b_{1}}\dots x^{a_{\tau}}y^{b_{\tau}} \in (3^{0})$. Let $\tau_{1}$ be the number of $x$ terms, and $\tau_{2}$ be the number of $x^{-1}$ terms which occur in $w$ (i.e. $\tau = \tau_{1} + \tau_{2}$). Then the number of geodesic representatives for $w$ is
    \[ \binom{\tau}{\tau_{1}} = \binom{\tau}{\tau_{2}}.
    \]
\end{cor}}


\subsection{Minimal length conjugacy representatives in odd dihedral Artin groups}
We now describe conjugacy geodesic representatives. To do this, we use the sets $\mathcal{A}$ and $\mathcal{B}$ introduced in Definition \ref{def:AB} separately.

\begin{defn}\label{def:AB}
    Let $\mathcal{A} = (1) \sqcup (2) \sqcup (3^{+}) \sqcup (3^{-}) \sqcup (3^{+} \cap 3^{-}) \sqcup \left(3^{0+}U\right) \sqcup \left(3^{0-}U\right)$ be the set of unique geodesics, and let $\mathcal{B} = \left(3^{0+}N\right) \sqcup \left(3^{0-}N\right) \sqcup \left(3^{0*}\right)$ be the set of non-unique geodesics. 
\end{defn}
\begin{defn}\label{A bar}
    Let $\overline{\mathcal{A}}$ be the set of all words in $\mathcal{A}$ which start and end with opposite letters:  
    \[ \overline{\mathcal{A}} = \{ w = x^{a_{1}}y^{b_{1}}\dots x^{a_{\tau}}y^{b_{\tau}}\Delta^{c} \in \mathcal{A} \; | \; a_{1} = 0 \Leftrightarrow b_{\tau} = 0 \}.
\]
\end{defn}

\comm{
Note $\overline{\mathcal{A}} \subset \mathsf{CycGeo}$. }
\begin{prop}\label{prop:prod equal}
    Let $w,v \in \overline{\mathcal{A}}$ have the form
    \[ w = x^{\alpha_{1}}y^{\beta_{1}}\dots x^{\alpha_{n_{w}}}y^{\beta_{n_{w}}}\Delta^{p}, \quad v = x^{\sigma_{1}}y^{\gamma_{1}}\dots x^{\sigma_{n_{v}}}y^{\gamma_{n_{v}}}\Delta^{q}.
    \]
    Then $w \sim v$ if and only if $n_{w}=n_{v}, \; p=q$, and the words $x^{\alpha_{1}}y^{\beta_{1}}\dots x^{\alpha_{n_{w}}}y^{\beta_{n_{w}}}$ and $x^{\sigma_{1}}y^{\gamma_{1}}\dots x^{\sigma_{n_{v}}}y^{\gamma_{n_{v}}}$ are cyclic permutations of each other. 
\end{prop}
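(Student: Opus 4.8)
The plan is to prove both directions by analyzing conjugacy in the free-product-with-amalgamation (or HNN) structure that underlies $G(m)\cong\langle x,y\mid x^2=y^m\rangle$, combined with the rigidity of the geodesic normal forms coming from \cref{prop:geos} and \cref{30 cases}. The backward implication is the easy one: if the core words $W=x^{\alpha_1}y^{\beta_1}\cdots x^{\alpha_{n_w}}y^{\beta_{n_w}}$ and $V=x^{\sigma_1}y^{\gamma_1}\cdots x^{\sigma_{n_v}}y^{\gamma_{n_v}}$ are cyclic permutations of one another and $p=q$, then since $\Delta=x^2$ is central, $w=W\Delta^p$ and $v=V\Delta^q$ differ only by conjugating the core and multiplying by the central element, so $w\sim v$. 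I would write this out in one or two lines.

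For the forward implication, suppose $w\sim v$. First I would use that $\Delta$ generates the center, so passing to the quotient $G(m)/\langle\Delta\rangle$, which is the free product $C_2 * C_m$ (when $m$ is odd), the images of $w$ and $v$ are conjugate there; this forces $p\equiv q$ modulo the relation governing how $\Delta$ is distributed, and I would need to check — using the explicit exponent sums in the normal forms and the fact that words in $\overline{\mathcal A}$ have a well-defined "Garside part" $c$ — that in fact $p=q$ on the nose. The key input here is that the normal forms in $\mathcal A$ are *unique* geodesics, so the Garside exponent is an invariant of the element, not just of the word; combined with the observation that conjugation preserves the total $x$- and $y$-exponent sums modulo the relation, this pins down $p=q$. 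Then, having matched the central parts, I reduce to showing that two elements of $C_2*C_m$ represented by the cyclic-normal-form-like words $\overline W$, $\overline V$ (with the alternating $x^{\pm}$, $y^{\beta}$ syllable structure, and starting/ending with opposite letter types, which is exactly the condition defining $\overline{\mathcal A}$) are conjugate in the free product if and only if they are cyclic permutations of each other. This is the classical conjugacy criterion for free products: two cyclically reduced elements are conjugate iff one is a cyclic permutation of the other. The condition "$a_1=0\Leftrightarrow b_\tau=0$" is precisely what makes the word cyclically reduced (no cancellation or syllable-merging at the seam when you cyclically close it up), so the criterion applies directly; in particular it gives $n_w=n_v$ since cyclic permutation preserves syllable length.

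The main obstacle, I expect, is the bookkeeping around the Garside/central part: showing $p=q$ rather than merely $p\equiv q$ in some quotient. One has to rule out that a word like $x^{a_1}y^{b_1}\cdots\Delta^p$ could be conjugate to a genuinely different-looking word by "absorbing" a $\Delta$ into the core via the relation $x^2=y^m$ — the normal form conditions in \cref{prop:geos} (the constraints $0\le a_i\le 1$, the bounds on $b_i$, and the sign of $c$) are designed exactly to prevent this, so the argument will be to invoke uniqueness of the geodesic representative in $\mathcal A$ together with the fact that a cyclic permutation of a word in $\overline{\mathcal A}$ stays in $\overline{\mathcal A}$ with the same parameters $n$ and $c$. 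A secondary subtlety is handling the even case $m=2k$ separately, where $G(m)=\mathrm{BS}(p,p)$ is an HNN extension rather than an amalgam and the relevant conjugacy criterion is Collins' lemma / the Britton's-lemma normal form argument instead of the free-product one; but the structure of the argument — center reduction, then cyclic-reduction criterion in the quotient — is the same, and I would remark that \cref{prop:prod equal} as stated lives in the odd section so only the $C_2*C_m$ case is needed here.
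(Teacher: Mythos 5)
Your backward direction is fine, and your overall strategy --- quotient by the centre to get $C_2 * C_m$, observe that the defining condition of $\overline{\mathcal{A}}$ makes the image cyclically reduced, and invoke the free-product conjugacy criterion --- is a genuinely different route from the paper's, which never passes to a quotient: it computes $u^{-1}wu$ directly for an arbitrary geodesic $u$, splits into cases according to whether $x$- or $y$-letters concatenate at the seams, and checks explicitly that the exceptional concatenations ($\beta_{n_w}+b_1=\pm m,\pm(m+1)$, or $\alpha_1-a_1=\pm 2$) can always be undone without changing the geodesic type or the power of $\Delta$, so that $u^{-1}wu$ is, up to cyclic reduction, a cyclic permutation of $w$.

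The problem is that your forward direction has a genuine gap, located exactly where the difficulty of the proposition lies. The projection to $C_2 * C_m$ is very lossy: it identifies $x$ with $x^{-1}$ and $y^{\beta}$ with $y^{\beta\pm m}$, and $\overline{\mathcal{A}}$ really does contain words realising both representatives of these classes --- $y^{k+1}$ occurs in Type $(3^{+})$ while $y^{-k}$ occurs in Type $(3^{0+}U)$, and the sign of the $x$-exponents separates Types $(3^{+})$ and $(3^{-})$. So the free-product criterion only yields that the tuples of residues $(\alpha_i \bmod 2,\ \beta_i \bmod m)$ and $(\sigma_i \bmod 2,\ \gamma_i \bmod m)$ agree up to cyclic permutation; it gives neither the actual exponents nor $p=q$. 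Your proposed repair does not close this: uniqueness of geodesics in $\mathcal{A}$ makes the Garside exponent an invariant of each \emph{element}, but $w$ and $v$ are different elements, so this says nothing about whether their Garside exponents agree; and the exponent-sum argument, made precise via the abelianisation $x\mapsto m$, $y\mapsto 2$, only gives $p-q=\sum_i s_i+\sum_i t_i$ after writing $\sigma_i=\alpha_i+2s_i$ and $\gamma_i=\beta_i+mt_i$, which is not zero for free. One must still argue, type by type using the exponent ranges of \cref{prop:geos} and \cref{30 cases} and the sign of $c$ attached to each type, that every $s_i$ and $t_i$ vanishes --- for instance, you must rule out a word containing $y^{k+1}$ (Type $(3^{+})$) being conjugate to one containing $y^{-k}$ in the corresponding slot (Type $(3^{0+}U)$), even though their images in $C_2*C_m$ coincide. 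That residual analysis is essentially the whole content of the proposition, so as written the proposal defers the hard step rather than proving it.
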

\begin{proof}
The reverse direction is clear, so suppose $w \sim v$. Let $u = x^{a_{1}}y^{b_{1}}\dots x^{a_{\tau}}y^{b_{\tau}}\Delta^{c}$ be geodesic and consider $u^{-1}wu$. We first note the Garside element  remains unchanged since $\Delta^{-c}\Delta^{p}\Delta^{c} = \Delta^{p}$. Also, we only have elements from the same free product factor concatenating in either $u^{-1}w$ or $wu$, but not both. We consider each type of concatenation in turn, and show that if we change the power of the Garside element, we can reverse this procedure, and so preserve the type of geodesic as defined in \cref{A bar}. 
    \par 
    First suppose the matching terms from the same factor consist of $y$ letters. Assuming free cancellation where possible, we have
    \[ u^{-1}wu =_{G} y^{-b_{\tau}}x^{-a_{\tau}}\dots y^{-b_{1}}\cdot x^{\alpha_{1}}y^{\beta_{1}}\dots x^{\alpha_{n_{w}}}y^{\beta_{n_{w}}} \cdot y^{b_{1}}\dots x^{a_{\tau}}y^{b_{\tau}}\Delta^{p}.
    \]
    Note if $\beta_{n} = -b_{1}$, then up to cyclic reduction $u^{-1}wu$ is a cyclic permutation of $w$, which remains in the same geodesic type as defined in \cref{A bar}. Since $u,w$ are both geodesic, we know that $\beta_{n_{w}} + b_{1} < m$ or $\beta_{n_{w}} + b_{1} > -m$ except in the following cases, where $\varepsilon = \pm 1$:  
    \begin{enumerate}
        \item[(i)] $\beta_{n_{w}} = b_{1} = \varepsilon (k+1)$,
        \item[(ii)] $\beta_{n_{w}} = \varepsilon k, \; b_{1} = \varepsilon (k+1)$ (and vice versa).
    \end{enumerate}
    Suppose $\beta_{n_{w}} = b_{1} = k+1$. After concatenation, $y^{\beta_{n_{w}}+b_{1}} = y^{m+1}$, which can be moved to the Garside element as follows:  
    \begin{align*}
        u^{-1}wu &=_{G} y^{-b_{\tau}}x^{-a_{\tau}}\dots y^{-b_{1}}\cdot x^{\alpha_{1}}y^{\beta_{1}}\dots x^{\alpha_{n_{w}}}y^{m+1}\dots x^{a_{\tau}}y^{b_{\tau}}\Delta^{p}\\
    &=_{G} y^{-b_{\tau}}x^{-a_{\tau}}\dots y^{-b_{1}}\cdot x^{\alpha_{1}}y^{\beta_{1}}\dots x^{\alpha_{n_{w}}}y\dots x^{a_{\tau}}y^{b_{\tau}}\Delta^{p+1}.
    \end{align*}
    Since $\beta_{n_{w}} = k+1$, we can assume $w$ is of Type $(1)$ or $(3^{+})$. Also $u^{-1}$ ends with $y^{-b_{1}} = y^{-(k+1)}$, and so $u^{-1}wu$ can be rewritten as follows:
    \begin{align*}
        u^{-1}wu &=_{G} y^{-b_{\tau}}x^{-a_{\tau}}\dots y^{-(k+1)}\cdot x^{\alpha_{1}}y^{\beta_{1}}\dots x^{\alpha_{n_{w}}}x^{2}y\dots x^{a_{\tau}}y^{b_{\tau}}\Delta^{p}\\
    &=_{G} y^{-b_{\tau}}x^{-a_{\tau}}\dots y^{-(k+1)}\cdot x^{\alpha_{1}}y^{\beta_{1}}\dots x^{\alpha_{n_{w}}}y\dots x^{a_{\tau}}y^{b_{\tau}}\Delta^{p+1} \\
    &=_{G} y^{-b_{\tau}}x^{-a_{\tau}}\dots y^{-m}y^{k}\cdot x^{\alpha_{1}}y^{\beta_{1}}\dots x^{\alpha_{n_{w}}}y\dots x^{a_{\tau}}y^{b_{\tau}}\Delta^{p+1} \\
     &=_{G} y^{-b_{\tau}}x^{-a_{\tau}}\dots y^{k}\cdot x^{\alpha_{1}}y^{\beta_{1}}\dots x^{\alpha_{n_{w}}}y\dots x^{a_{\tau}}y^{b_{\tau}}\Delta^{p}.
    \end{align*}
    Now $u^{-1}wu$ is the same type (either $(1)$ or $(3^{+})$) as $w$, and up to cyclic reduction is a cyclic permutation of $w$. The remaining cases follow a similar strategy. Otherwise, if $\beta_{n_{w}} + b_{1} \neq 0$, then $u^{-1}wu$ is equal to $w$ up to cyclic reduction.  
    \par 
    Now suppose the matching terms from the same factor consist of $x$ letters. Assuming free cancellation where possible, we have
    \[ u^{-1}wu =_{G} y^{-b_{\tau}}x^{-a_{\tau}}\dots y^{-b_{1}}x^{-a_{1}}\cdot x^{\alpha_{1}}y^{\beta_{1}}\dots x^{\alpha_{n_{w}}}y^{\beta_{n_{w}}} \cdot x^{a_{1}}y^{b_{1}}\dots x^{a_{\tau}}y^{b_{\tau}}\Delta^{p}.
    \]
    Again if $a_{1} = \alpha_{1}$, then up to cyclic reduction $u^{-1}wu$ is a cyclic permutation of $w$, which remains in the required geodesic form as defined in \cref{A bar}. If $a_{1} \neq \alpha_{1}$, then by \cref{prop:geos} we can assume $\alpha_{1} - a_{1} = \pm 2$. First suppose $\alpha_{1} - a_{1} = 2$. Then we can move the $x^{\alpha_{1}-a_{1}} = x^{2}$ term to the Garside element, which becomes $\Delta^{p+1}$. Since $\alpha_{1} = 1$, we can assume $w$ is of Type $(1)$ or $(3^{+})$. We note that $u$ starts with $x^{a_{1}} = x^{-1}$, so we can rewrite as:
    \begin{align*}
        u^{-1}wu &=_{G} y^{-b_{\tau}}x^{-a_{\tau}}\dots y^{-b_{1}}x^{2}y^{\beta_{1}}\dots x^{\alpha_{n_{w}}}y^{\beta_{n_{w}}} \cdot x^{-1}y^{b_{1}}\dots x^{a_{\tau}}y^{b_{\tau}}\Delta^{p} \\
        &=_{G} y^{-b_{\tau}}x^{-a_{\tau}}\dots y^{-b_{1}}y^{\beta_{1}}\dots x^{\alpha_{n_{w}}}y^{\beta_{n_{w}}} \cdot x^{-1}y^{b_{1}}\dots x^{a_{\tau}}y^{b_{\tau}}\Delta^{p+1}\\
        &=_{G} y^{-b_{\tau}}x^{-a_{\tau}}\dots y^{-b_{1}}y^{\beta_{1}}\dots x^{\alpha_{n_{w}}}y^{\beta_{n_{w}}} \cdot x^{-2}xy^{b_{1}}\dots x^{a_{\tau}}y^{b_{\tau}}\Delta^{p+1} \\
        &=_{G} y^{-b_{\tau}}x^{-a_{\tau}}\dots y^{-b_{1}}y^{\beta_{1}}\dots x^{\alpha_{n_{w}}}y^{\beta_{n_{w}}} \cdot xy^{b_{1}}\dots x^{a_{\tau}}y^{b_{\tau}}\Delta^{p}.
    \end{align*}
    This term is now the same type of geodesic as $w$, and up to cyclic reduction is a cyclic permutation of $w$. The case for $\alpha_{1}-a_{1} = -2$ follows a symmetric proof. This method also holds when $y$ letters concatenate in $u^{-1}w$ or $x$ letters concatenate in $wu$.      
\end{proof}
\comm{
\myRed{Keep this remark?}
\begin{rmk}
    Result implies $S = \mathsf{ConjGeo}(G,X) \subset \mathsf{CycGeo}(G,X)$. This matches from 4 authors paper 'Conjugacy languages in groups' - over standard generating set $Y$, $\mathsf{ConjGeo}(G,Y) = \mathsf{CycGeo}\setminus A$ where $A$ is finite (see Page 13), using locally testable results.
\end{rmk}}
We now consider the set $\mathcal{B}$, and construct a method which ensures we select a unique representative from all possible geodesic forms.
First recall that any word $w$ of Type $\left(3^{0+}N\right)$ can only be rewritten using \cref{eq:a non unique}. For choosing a unique representative, we will choose words where all $y^{-k}$ appear leftmost in the word, and all $y^{k+1}$ appear rightmost the word, to match the form defined in \cref{rmk:alternative odd a}. For conjugation, we no longer preserve this unique choice up to cyclic permutation. For example, consider $w \in \left(3^{0+}N\right)$ of the form in \cref{eq:alt odd 3a}, i.e.
\[ w = A_{1}y^{-k}A_{2}y^{-k}\dots A_{\tau_{1}}y^{-k}A_{\tau_{1} + 1}y^{k+1}A_{\tau_{1}+2}y^{k+1}\dots y^{k+1}A_{\tau_{1} + \tau_{2} + 1},
\]
and its cyclic permutation
$ w' = y^{k+1}A_{\tau + 1}A_{1}y^{-k}A_{2}y^{-k}\dots A_{\tau_{1}}y^{-k}A_{\tau_{1}+1}y^{k+1}A_{\tau_{1}+2}y^{k+1}\dots A_{\tau}.$
This is no longer the correct geodesic form of \cref{rmk:alternative odd a}, as we can apply \cref{eq:a non unique} to get
\[ w' =_{G} y^{-k}A_{\tau +1}A_{1}y^{-k}A_{2}y^{-k}\dots A_{\tau_{1}}y^{k+1}A_{\tau_{1}+1}y^{k+1}A_{\tau_{1}+2}y^{k+1}\dots A_{\tau}.
\]
This rewrite only occurs when we cyclically permute $y^{-k}$ or $y^{k+1}$ terms. Therefore any cyclic permutation of $w$, written as our geodesic normal form from \cref{rmk:alternative odd a}, is a cyclic permutation of elements in $(A_{1}, \dots, A_{\tau + 1})$, such that blocks are separated by $Y_{a} = \left(y^{-k}, \dots, y^{-k}, y^{k+1}, y^{k+1}, \dots, y^{k+1}\right)$, where $Y_{a}$ is fixed. 

This also occurs for any $w$ of Type $\left(3^{0-}N\right)$: any cyclic permutation of $w$, written as our geodesic normal form from \cref{rmk:alternative odd b}, is a cyclic permutation of elements in $(A_{1}, \dots, A_{\tau + 1})$, such that blocks are separated by $Y_{b} = \left(y^{-(k+1)}, \dots, y^{-(k+1)}, y^{k}, y^{k}, \dots, y^{k}\right)$, where $Y_{b}$ is fixed.

\begin{defn}(split cyclic permutation for Types $\left(3^{0+}N\right)$ and $\left(3^{0-}N\right)$)\label{defn: split odds}

Let $\tau = \tau_{1} + \tau_{2}$ with $\tau_{1}, \tau_{2} \geq 1$.
\begin{itemize}
    \item[(i)] 
    Define tuples $Y_{a_{\tau_{1}, \tau_{2}}}, Y_{b_{\tau_{1}, \tau_{2}}}$ of powers of $y$ with $a_i \in \{-k, k+1\}, b_{i} \in \{-(k+1), k\}$ as:
    \begin{align*}
        Y_{a_{\tau_{1},\tau_{2}}} &= \left(y^{a_{1}}, y^{a_{2}}, \dots, y^{a_{\tau}}\right) = (\underbrace{y^{-k}, y^{-k}, \dots, y^{-k}}_{\tau_{1}}, \underbrace{y^{k+1}, y^{k+1}, \dots, y^{k+1}}_{\tau_{2}}), \\
        Y_{b_{\tau_{1},\tau_{2}}} &= \left(y^{b_{1}}, y^{b_{2}}, \dots, y^{b_{\tau}}\right) = (\underbrace{y^{-(k+1)}, y^{-(k+1)}, \dots, y^{-(k+1)}}_{\tau_{1}}, \underbrace{y^{k}, y^{k}, \dots, y^{k}}_{\tau_{2}}).
    \end{align*}
    Let $w \in \left(3^{0+}N\right)$ be a geodesic based on $Y_{a_{\tau_{1},\tau_{2}}}$ of the form
    \begin{align*}
        w &= A_{1}y^{a_{1}}A_{2}y^{a_{2}}\dots A_{\tau_{1}}y^{a_{\tau_{1}}}A_{\tau_{1} + 1}y^{a_{\tau_{1}+1}}A_{\tau_{1} + 2}y^{a_{\tau_{1} +2}}\dots y^{a_{\tau}}A_{\tau + 1} \\
        &= A_{1}y^{-k}A_{2}y^{-k}\dots A_{\tau_{1}}y^{-k}A_{\tau_{1} + 1}y^{k+1}A_{\tau_{1} + 2}y^{k+1}\dots y^{k+1}A_{\tau + 1},
    \end{align*}
    where the $A_{s}$ are words over $x,y$ satisfying the conditions from \cref{eq:alt odd 3a}.
Alternatively, let $w \in \left(3^{0-}N\right)$ be a geodesic based on $Y_{b_{\tau_{1}, \tau_{2}}}$ of the form
    \begin{align*}
        w &= A_{1}y^{a_{1}}A_{2}y^{a_{2}}\dots A_{\tau_{1}}y^{a_{\tau_{1}}}A_{\tau_{1} + 1}y^{a_{\tau_{1}+1}}A_{\tau_{1} + 2}y^{a_{\tau_{1} +2}}\dots y^{a_{\tau}}A_{\tau + 1} \\
        &= A_{1}y^{-(k+1)}A_{2}y^{-(k+1)}\dots A_{\tau_{1}}y^{-(k+1)}A_{\tau_{1} + 1}y^{k}A_{\tau_{2} + 2}y^{k}\dots y^{k}A_{\tau + 1},
    \end{align*}
    where the $A_{s}$ are words over $x,y$ satisfying the conditions from \cref{eq:alt odd 3b}.
  \item[(ii)]  
    A \emph{split cyclic permutation} of $w$ as in (i) is one where the tuple $Y_{a_{\tau_{1}, \tau_{2}}}$ ($Y_{b_{\tau_{1},\tau_{2}}}$ resp.) is preserved and the $A_{s}$ blocks are cyclically permuted; that is, it is a geodesic $w' \in \left(3^{0+}N\right)$ ($w' \in \left(3^{0-}N\right)$ resp.) of one of the following forms:\\
    Case 1: all $A_{s}$ blocks are preserved, and for $1 \leq t \leq \tau+1$
    \[ w' = y^{a_{1}}A_{t}y^{a_{2}}A_{t+1}\dots y^{a_{\tau - t + 2}}A_{\tau + 1}A_{1}y^{a_{\tau -t+3}}A_{2}\dots y^{a_{\tau}}A_{t-1}.
    \]
    Case 2: one of the $A_{s}$ blocks is split; that is, for $1 \leq t \leq \tau+1$ write $A_{t} = A_{t_{1}}A_{t_{2}}$ as a reduced product of prefix and suffix. Then 
    \[  w' = A_{t_{2}}y^{a_{1}}A_{t+1}y^{a_{2}}\dots y^{a_{\tau-t+1}}A_{\tau + 1}A_{1}y^{a_{\tau - t + 2}}A_{2}\dots y^{a_{\tau}}A_{t_{1}}.
    \]
    
\end{itemize}
\end{defn}
\begin{exmp}\label{ex:split}
    Consider the word $w \in (3^{0+}N)$ of the form
    \[ w = xyx\textcolor{red}{y^{-2}}x\textcolor{red}{y^{-2}}x\textcolor{red}{y^{3}}xy^{-1}x\textcolor{red}{y^{3}}.
    \]
   We have highlighted the terms from the tuple $Y_{a_{\tau_{1}}, a_{\tau_{2}}}$ as in \cref{defn: split odds} (here $k = 2$). The following is an example of a cyclic permutation of $w$:
    \[ w' = xy^{-1}x\textcolor{red}{y^{3}}xyx\textcolor{red}{y^{-2}}x\textcolor{red}{y^{-2}}x\textcolor{red}{y^{3}}.
    \]
    This differs from a split cyclic permutation of $w$, for example 
    \[ w'' = xy^{-1}x\textcolor{red}{y^{-2}}xyx\textcolor{red}{y^{-2}}x\textcolor{red}{y^{3}}x\textcolor{red}{y^{3}}
    \]
    is a split cyclic permutation of $w$.
\end{exmp}

Recall for any word in $\left(3^{0*}\right)$, we can rewrite any pairs of $x$ and $x^{-1}$ terms using \cref{rewrite pairs}. For choosing a unique geodesic representative, we will choose words where all $x^{-1}$ terms appear leftmost in the word, and all $x$ terms appear rightmost in the word, to match the form defined in \cref{defn:30*}. For conjugation, again we no longer preserve this unique choice up to cyclic permutation. For example, if 
$w = x^{-1}y^{b_{1}}x^{-1}y^{b_{2}}\dots y^{b_{\tau_{1}}}xy^{b_{\tau_{1}+1}}x\dots y^{b_{\tau_{1}+\tau_{2}}}
$
is of the correct form, we can consider a cyclic permutation of $w$ of the form
\[ w' = xy^{b_{\tau_{1}+s}}x\dots y^{b_{\tau_{1}+\tau_{2}}}x^{-1}y^{b_{1}}\dots y^{b_{\tau_{1}}}xy^{b_{\tau_{1}+1}}\dots y^{b_{\tau_{1}+s-1}},
\]
for some $\tau_{1} \leq s \leq \tau_{1}+\tau_{2}$. This is no longer of the correct form from \cref{defn:30*}, since we have $x$ terms occurring before $x^{-1}$ terms. In this case, we apply \cref{rewrite pairs} to $w'$ to move all $x^{-1}$ terms to the left, and all $x$ terms to the right.  
\begin{defn}(split cyclic permutation for Type $\left(3^{0*}\right)$)\label{def:X}

    Let $\tau = \tau_{1}+\tau_{2}$, and define the following tuple
    \[ X_{\tau_{1}, \tau_{2}} = \left(x^{a_{1}}, x^{a_{2}}, \dots, x^{a_{\tau}}\right) = (\underbrace{x^{-1}, x^{-1}, \dots, x^{-1}}_{\tau_{1}}, \underbrace{x, x, \dots, x}_{\tau_{2}}).
    \]
    Let $w \in \left(3^{0*}\right)$ be a geodesic as in \cref{defn:30*}:
    \begin{align*}
        w &= x^{a_{1}}y^{b_{1}}x^{a_{1}}y^{b_{2}}\dots y^{b_{\tau_{1}}}x^{a_{\tau_{1}+1}}y^{b_{\tau_{1}+1}}x^{a_{\tau_{1}+2}}\dots x^{a_{\tau_{1}+\tau_{2}}}y^{b_{\tau_{1}+\tau_{2}}}\\
        &= x^{-1}y^{b_{1}}x^{-1}y^{b_{2}}\dots y^{b_{\tau_{1}}}xy^{b_{\tau_{1}+1}}x\dots y^{b_{\tau_{1}+\tau_{2}}}.
    \end{align*}
    A \emph{split cyclic permutation} of $w$ is a geodesic
    $ w' = x^{a_{1}}y^{\beta_{1}}x^{a_{2}}y^{\beta_{2}}\dots y^{\beta_{\tau}} \in \left(3^{0*}\right)$,
    such that $\left(y^{\beta_{1}}, y^{\beta_{2}}, \dots, y^{\beta_{\tau}}\right)$ is a cyclic permutation of $\left(y^{b_{1}}, y^{b_{2}}, \dots, y^{b_{\tau}}\right)$; $w'$ may start with $y^{\beta_{1}}$. 
\end{defn}
\begin{defn}\label{B bar}
    Let $\overline{\mathcal{B}} = \overline{\mathcal{B}_{+}} \sqcup \overline{\mathcal{B}_{-}} \sqcup \overline{\mathcal{B}_{*}}$, where
    \begin{enumerate}
        \item[(i)] $\overline{\mathcal{B}_{+}} \subset \left(3^{0+}N\right)$ is the set of all words of the form in \cref{eq:alt odd 3a},
        \item[(ii)] $\overline{\mathcal{B}_{-}} \subset \left(3^{0-}N\right)$ is the set of all words of the form in \cref{eq:alt odd 3b}, 
        \item[(iii)] $\overline{\mathcal{B}_{*}} \subset \left(3^{0*}\right)$ is the set of all words of the form in \cref{defn:30*},
    \end{enumerate}
    and for all $w \in \overline{\mathcal{B}}$, $w$ starts and ends with opposite letters. 
\end{defn}

\begin{prop}\label{prop:conj geo B}    
    Let $w \in \overline{\mathcal{B}}$ be based on a tuple $Y_{a_{\tau_{1}}, a_{\tau_{2}}}, Y_{b_{\tau_{1}}, b_{\tau_{2}}}$ or $X_{\tau_{1}, \tau_{2}}$, and let $v \in \overline{\mathcal{B}}$ be based on $Y_{a_{\sigma_{1}}, a_{\sigma_{2}}}, Y_{b_{\sigma_{1}}, b_{\sigma_{2}}}$ or $X_{\sigma_{1}, \sigma_{2}}$. Then $w \sim v$ if and only if $Y_{a_{\tau_{1}}, a_{\tau_{2}}} = Y_{a_{\sigma_{1}}, a_{\sigma_{2}}}, \; Y_{b_{\tau_{1}}, b_{\tau_{2}}} = Y_{b_{\sigma_{1}}, b_{\sigma_{2}}}$ or $X_{\tau_{1}, \tau_{2}} = X_{\sigma_{1}, \sigma_{2}}$, and $w$ and $v$ are equal up to a split cyclic permutation.  
\end{prop}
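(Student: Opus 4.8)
The plan is to mirror the structure of the proof of \cref{prop:prod equal}, but now working modulo the more complicated equivalence generated by the rewrite rules \cref{eq:a non unique} and \cref{rewrite pairs}, which is precisely what the notion of split cyclic permutation is designed to capture. The reverse direction should again be essentially immediate: if $w$ and $v$ are based on the same tuple and differ by a split cyclic permutation, then one checks case by case (Case 1 and Case 2 of \cref{defn: split odds}, and the analogous statement in \cref{def:X}) that $v$ is obtained from $w$ by a cyclic conjugation — conjugating by the relevant prefix, possibly after inserting a $\Delta^{\pm 1}$ that is absorbed because $c=0$ for all of $\mathcal{B}$ — so $w\sim v$. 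The bookkeeping is that conjugating by a prefix ending or starting in a $y^{\pm k}$, $y^{\pm(k+1)}$ or $x^{\pm 1}$ block may force one application of \cref{eq:a non unique} or \cref{rewrite pairs} to restore the canonical (leftmost/rightmost) form, which is exactly the ``split'' phenomenon.

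For the forward direction, suppose $w\sim v$ with both in $\overline{\mathcal{B}}$. The first step is to pin down the invariants. The power of the Garside element is conjugation-invariant and equals $0$ here, so that is consistent. Next, the three sub-types $\overline{\mathcal{B}_+},\overline{\mathcal{B}_-},\overline{\mathcal{B}_*}$ must be preserved: this follows because membership in $\left(3^{0+}\right)$, $\left(3^{0-}\right)$, $\left(3^{0*}\right)$ is detectable from the set of geodesic representatives (which letters $x,x^{-1}$ appear, and which extreme powers $y^{\pm k}, y^{\pm(k+1)}$ appear), and these features are preserved under conjugation by the argument already used in \cref{prop:prod equal} — conjugation only concatenates letters from one free-product factor on one side, and any overflow past the bound $m$ gets pushed into $\Delta$, which then must be pushed back out since $c=0$. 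Then, within a fixed sub-type, the key claim is that the multiset of ``separator'' blocks — i.e. the tuple $Y_{a_{\tau_1,\tau_2}}$, $Y_{b_{\tau_1,\tau_2}}$, or $X_{\tau_1,\tau_2}$ — is a conjugacy invariant, and moreover the cyclic order of the remaining $A_s$-blocks (up to the splitting allowed in Case 2) is the only remaining freedom.

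To prove that claim, I would take a geodesic conjugator $u$ and analyze $u^{-1}wu$ exactly as in \cref{prop:prod equal}: write $u = x^{a_1}y^{b_1}\cdots x^{a_\tau}y^{b_\tau}$ (no $\Delta$ part is needed up to adjusting, since we can conjugate in steps), and observe that concatenation happens on only one side for each matching factor. Reduce to the case where $u$ is a single syllable $x^{\pm1}$ or $y^{j}$; then $u^{-1}wu$ is, after reduction, a cyclic permutation of $w$, and one checks that rewriting it back into canonical $\overline{\mathcal{B}}$-form is exactly a split cyclic permutation (the ``one block gets split'' case arises precisely when $u$ is a proper power of $y$ cutting into an $A_s$ block, and the leftmost/rightmost normalization of the $y^{\pm k}$ or $x^{\pm 1}$ separators is forced). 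Iterating over the syllables of a general $u$, and noting that split cyclic permutation is transitive in the appropriate sense, gives that $w$ and $v$ are related by a split cyclic permutation with the same separator tuple.

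The main obstacle I anticipate is the ``book-keeping under normalization'' step: after an arbitrary conjugation one obtains some geodesic word that need not be in the canonical $\overline{\mathcal{B}}$ form, and one must argue that the unique act of normalizing it — sweeping all $y^{-k}$ (resp.\ $x^{-1}$) terms left and all $y^{k+1}$ (resp.\ $x$) terms right via \cref{eq:a non unique} and \cref{rewrite pairs} — produces something that is still a split cyclic permutation of $w$ and has not changed the separator multiset. This requires checking that these rewrite rules commute appropriately with cyclic shifts and never create or destroy a $y^{\pm k},y^{\pm(k+1)}$ syllable or an $x^{\pm1}$ letter, only relocate them; the boundary interactions (what happens at the seam where $u^{-1}$ meets $w$ and where $w$ meets $u$, in cases (i),(ii) of \cref{prop:prod equal}-style analysis, now with $y^{\pm k}$ versus $y^{\pm(k+1)}$) are the fiddly part and will need a short dedicated lemma or an explicit enumeration of the handful of boundary configurations.
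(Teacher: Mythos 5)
Your proposal is correct and follows essentially the same route as the paper: the paper's proof likewise mimics \cref{prop:prod equal}, analyses the concatenation seams of $u^{-1}wu$ for $y$-letters and $x$-letters in turn, handles the overflow cases by pushing a $y^{\pm m}$ or $x^{\pm 2}$ into the Garside element and back out, and observes that restoring the canonical $\overline{\mathcal{B}}$ form then costs exactly a split cyclic permutation. The only cosmetic difference is that you reduce to single-syllable conjugators and iterate, whereas the paper works with a general geodesic conjugator directly; this does not change the substance of the argument.
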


\begin{proof}
    We follow a similar method as in \cref{prop:prod equal}, but we need to additionally consider 
    split cyclic permutations. 
Let $w = x^{\alpha_{1}}y^{\beta_{1}}\dots y^{\beta_{n}}$ and consider the word $u^{-1}wu$ where $u = x^{a_{1}}y^{b_{1}}\dots x^{a_{\tau}}y^{b_{\tau}}\Delta^{c}$ is geodesic. Firstly suppose the matching terms from the same factor consist of $y$ letters. Assuming free cancellation where possible, we have
    \[ u^{-1}wu =_{G} y^{-b_{\tau}}x^{-a_{\tau}}\dots y^{-b_{1}}\cdot x^{\alpha_{1}}y^{\beta_{1}}\dots x^{\alpha_{n}}y^{\beta_{n}} \cdot y^{b_{1}}\dots x^{a_{\tau}}y^{b_{\tau}}.
    \]
    Since $u,w$ are both geodesic and $w \in (3^{0})$, we have $-m < \beta_{n}+b_{1} < m$ except in the three cases as in the proof of \cref{prop:prod equal}. We consider the case where $\beta_{n} = b_{1} = k+1$. We can move $y^{m}$ to the Garside element:
    \begin{align*}
        u^{-1}wu &=_{G} y^{-b_{\tau}}x^{-a_{\tau}}\dots y^{-b_{1}}\cdot x^{\alpha_{1}}y^{\beta_{1}}\dots x^{\alpha_{n}}y^{m+1}\dots x^{a_{\tau}}y^{b_{\tau}} \\
        &=_{G} y^{-b_{\tau}}x^{-a_{\tau}}\dots y^{-b_{1}}\cdot x^{\alpha_{1}}y^{\beta_{1}}\dots x^{\alpha_{n}}yx^{a_{1}}\dots x^{a_{\tau}}y^{b_{\tau}}\Delta.
    \end{align*}
    Now $u^{-1}$ ends with $y^{-b_{1}} = y^{-(k+1)}$, which can be rewritten as 
    \begin{align*}
        u^{-1}wu &=_{G} y^{-b_{\tau}}x^{-a_{\tau}}\dots y^{-b_{1}}\cdot x^{\alpha_{1}}y^{\beta_{1}}\dots x^{\alpha_{n}}y^{m+1}\dots x^{a_{\tau}}y^{b_{\tau}} \\
        &=_{G} y^{-b_{\tau}}x^{-a_{\tau}}\dots y^{-m}y^{k}\cdot x^{\alpha_{1}}y^{\beta_{1}}\dots x^{\alpha_{n}}yx^{a_{1}}\dots x^{a_{\tau}}y^{b_{\tau}}\Delta \\
        &=_{G} y^{-b_{\tau}}x^{-a_{\tau}}\dots y^{k}\cdot x^{\alpha_{1}}y^{\beta_{1}}\dots x^{\alpha_{n}}yx^{a_{1}}\dots x^{a_{\tau}}y^{b_{\tau}},
    \end{align*}
    by moving $y^{-m}$ to cancel with the Garside element. Up to cyclic reduction, in order for $u^{-1}wu \in \overline{\mathcal{B}}$, we then have to take a split cyclic permutation, to move all $y^{-k}$ terms to the left of all $y^{k+1}$ terms.  
    The remaining cases follow a similar strategy. Otherwise, if $|\beta_{n}+b_{1}| \neq 0$, then we can cyclically reduce to $w$.
    
    Now suppose the matching terms consist of $x$ letters. Assuming potential free cancellations 
    \[ u^{-1}wu =_{G} y^{-b_{\tau}}x^{-a_{\tau}}\dots y^{-b_{1}}x^{-a_{1}}\cdot x^{\alpha_{1}}y^{\beta_{1}}\dots x^{\alpha_{n}}y^{\beta_{n}} \cdot x^{a_{1}}y^{b_{1}}\dots x^{a_{\tau}}y^{b_{\tau}}.
    \]
    Again we can assume $\alpha_{1}-a_{1} = \pm 2$. Suppose $\alpha_{1}-a_{1} = 2$, i.e. $\alpha_{1} = 1, a_{1} = -1$. We can move the $x^{2}$ term to the Garside, and then reverse this procedure by moving the $x^{a_{1}} = x^{-1}$ term to cancel with the Garside:
    \begin{align*}
        u^{-1}wu &=_{G} y^{-b_{\tau}}x^{-a_{\tau}}\dots y^{-b_{1}}x^{2}y^{\beta_{1}}\dots x^{\alpha_{n}}y^{\beta_{n}} \cdot x^{-1}y^{b_{1}}\dots x^{a_{\tau}}y^{b_{\tau}}\\
        &=_{G} y^{-b_{\tau}}x^{-a_{\tau}}\dots y^{-b_{1}}y^{\beta_{1}}\dots x^{\alpha_{n}}y^{\beta_{n}} \cdot x^{-1}y^{b_{1}}\dots x^{a_{\tau}}y^{b_{\tau}}\Delta\\
        &=_{G} y^{-b_{\tau}}x^{-a_{\tau}}\dots y^{-b_{1}}y^{\beta_{1}}\dots x^{\alpha_{n}}y^{\beta_{n}} \cdot x^{-2}xy^{b_{1}}\dots x^{a_{\tau}}y^{b_{\tau}}\Delta \\
        &=_{G} y^{-b_{\tau}}x^{-a_{\tau}}\dots y^{-b_{1}}y^{\beta_{1}}\dots x^{\alpha_{n}}y^{\beta_{n}} \cdot xy^{b_{1}}\dots x^{a_{\tau}}y^{b_{\tau}}.
    \end{align*}
    Up to cyclic reduction, in order for $u^{-1}wu \in \overline{\mathcal{B}}$, we have to take a split cyclic permutation, to move the $x^{-1}$ terms to the left of all $x$ terms. The case for $\alpha_{1}-a_{1} = -2$ is analogous. 
\end{proof}

\begin{cor}
    Modulo cyclic permutations, the set $\overline{\mathcal{A}}$ gives a set of minimal length conjugacy representatives. Modulo split cyclic permutations, the set $\overline{\mathcal{B}}$ gives a set of minimal length conjugacy representatives. 
\end{cor}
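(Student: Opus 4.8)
The statement packages, for each of $\overline{\mathcal A}$ and $\overline{\mathcal B}$, two facts: that every word in the set is a conjugacy geodesic, and that the set, modulo cyclic (resp.\ split cyclic) permutation, maps bijectively onto the conjugacy classes it meets, with the two sets together exhausting all classes apart from elementary ones. My plan is to isolate three ingredients. \emph{(A) Surjectivity:} every conjugacy class of $G(m)$ except those of the powers of $x$ and the powers of $y$ (equivalently, the classes whose shortest word has a single $\{x,y\}$-syllable) has a conjugacy geodesic representative in $\overline{\mathcal A}\cup\overline{\mathcal B}$. \emph{(B) Equivalence relations:} for $w,v\in\overline{\mathcal A}$ (resp.\ $\overline{\mathcal B}$), $w\sim v$ in $G(m)$ iff $v$ is a cyclic (resp.\ split cyclic) permutation of $w$. \emph{(C) Disjointness:} no word of $\overline{\mathcal A}$ is conjugate to a word of $\overline{\mathcal B}$. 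Granting these, the corollary follows: by (A) and (C) the classes represented in $\overline{\mathcal A}$ are exactly those whose shortest elements are unique geodesics, those in $\overline{\mathcal B}$ the remaining non-elementary ones; by (B) each such class is hit once per (split) cyclic orbit; and then each word $w\in\overline{\mathcal A}\cup\overline{\mathcal B}$ is forced to be a conjugacy geodesic, since by (A) its class has a conjugacy geodesic representative in $\overline{\mathcal A}\cup\overline{\mathcal B}$, which by (C) lies in the same one of the two sets, hence by (B) is a (split) cyclic permutation of $w$ and so of the same length.

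Ingredient (B) is essentially already established. One direction is \cref{prop:prod equal} for $\overline{\mathcal A}$ and \cref{prop:conj geo B} for $\overline{\mathcal B}$ (the conditions imposed there on the defining tuples and on $n$ and $c$ are precisely what "(split) cyclic permutation" records, via \cref{defn: split odds,def:X}). The converse is trivial: an ordinary cyclic permutation $w_2w_1$ of $w=w_1w_2$ equals $w_1^{-1}(w_1w_2)w_1$, so it is conjugate to $w$; and a split cyclic permutation is by definition an ordinary cyclic permutation followed by the length-preserving rewrites \eqref{eq:a non unique} or \eqref{rewrite pairs}, which change only the chosen geodesic word, not the group element, so it too is conjugate to $w$. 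Ingredient (C) comes out of the case analyses already carried through: the proofs of \cref{prop:prod equal,prop:conj geo B} show that conjugating a word of $\overline{\mathcal A}$ or $\overline{\mathcal B}$ and reducing back to normal form preserves the Garside exponent $c$ and the classified geodesic type from \cref{prop:geos,30 cases}; since the types comprising $\mathcal A$ and those comprising $\mathcal B$ in \cref{def:AB} are disjoint, a word of one set cannot be conjugate to a word of the other.

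The real content is Ingredient (A). Fix a conjugacy class, a shortest element $g$ in it, and a geodesic word $w$ for $g$; by \cref{prop:geos,30 cases} $w$ is one of the classified normal forms, and if $w$ is a non-unique geodesic it lies in one of $(3^{0+}N),(3^{0-}N),(3^{0*})$. If $w$ has a single $\{x,y\}$-syllable then $g$ is a power of $x$ or of $y$; these elementary classes each have an obvious shortest representative and are disjoint from the classes met by $\overline{\mathcal A}\cup\overline{\mathcal B}$, whose representatives all use both free factors nontrivially (in $G(m)/Z\cong\Z_2\ast\Z_m$ they project to elements of syllable length $\ge 2$, hence are not conjugate to a factor element) — so these are the excluded classes. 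Otherwise $w$ has at least two syllables. A cyclic permutation of a word has the same length and represents a conjugate element, so every cyclic permutation of the conjugacy geodesic $w$ is again a conjugacy geodesic, hence again geodesic and covered by the classification. If the first and last syllables of $w$ lie in the same free factor, move the last syllable (together with the central $\Delta^c$) to the front and merge it with the first; because $|a_i|\le 1$ and the $y$-exponent bounds are those of \cref{prop:geos,30 cases}, this merge either strictly shortens the word — impossible, as $g$ is conjugacy-minimal — or is length-neutral (two $x$-syllables merging into $x^{\pm 2}$ absorbed by $\Delta$, or two like-signed $y$-syllables merging into a still-admissible exponent) and strictly decreases the number of $\{x,y\}$-syllables. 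Iterating, we reach a conjugacy geodesic whose first and last syllables lie in distinct free factors, i.e.\ one with $a_1=0\Leftrightarrow b_\tau=0$; if it is a unique geodesic it now lies in $\overline{\mathcal A}$. If it is a non-unique geodesic, apply \eqref{eq:a non unique} (resp.\ \eqref{rewrite pairs}) to sort all $y^{-k}$, $y^{-(k+1)}$ (resp.\ $x^{-1}$) syllables to the left and all $y^{k+1}$, $y^{k}$ (resp.\ $x$) syllables to the right, bringing it into the canonical shape of \cref{rmk:alternative odd a,rmk:alternative odd b,defn:30*}; these rewrites never change which free factor the first or last syllable belongs to, so the word stays cyclically reduced and lands in $\overline{\mathcal B}$ by \cref{B bar}. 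This gives (A).

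The main obstacle is the cyclic-reduction step inside (A): one must verify carefully that merging a boundary syllable of a conjugacy-minimal geodesic into its opposite end always returns to the classified normal forms with the Garside exponent correctly renormalised — the same boundary bookkeeping ($y^{\pm m}$ or $x^{\pm 2}$ passing into or out of $\Delta$) as in the proofs of \cref{prop:prod equal,prop:conj geo B} — that each such merge is length-neutral rather than length-decreasing, and that the strict drop in syllable count forces termination at a cyclically reduced representative; one also has to confirm that the single-syllable families really are the only classes missed. Everything else is a repackaging of \cref{prop:geos,30 cases,prop:prod equal,prop:conj geo B}.
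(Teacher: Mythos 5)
Your proposal is correct and follows essentially the same route as the paper: the paper's proof likewise cites \cref{prop:prod equal} and \cref{prop:conj geo B} for uniqueness up to (split) cyclic permutation and then disposes of surjectivity in one sentence ("any geodesic of the form in \cref{prop:geos} can be cyclically permuted to a word in either $\overline{\mathcal{A}}$ or $\overline{\mathcal{B}}$, using rewrite rules if necessary"). Your ingredients (A) and (C) simply spell out, more carefully than the paper does, that one-line surjectivity claim (including the degenerate single-syllable classes) and the implicit fact that no class is counted in both sets.
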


\begin{proof}
    We have shown that elements of $\overline{\mathcal{A}}$ are conjugacy geodesics, unique up to cyclic permutation by \cref{prop:prod equal}. Similarly elements of $\overline{\mathcal{B}}$ are conjugacy geodesics, unique up to a split cyclic permutation by \cref{prop:conj geo B}. It remains to show that any element in $G(m)$ is conjugate to an element represented by a word in $\overline{\mathcal{A}}$ or $\overline{\mathcal{B}}$. This follows immediately since any geodesic of the form in \cref{prop:geos} can be cyclically permuted to a word in either $\overline{\mathcal{A}}$ or $\overline{\mathcal{B}}$, using rewrite rules if necessary (for example \cref{eq:a non unique} 
or \cref{rewrite pairs}). 
\end{proof}

\begin{exmp}\label{example:G(3)}
    We consider examples of minimal length conjugacy representatives for 
    \[ G(3) = \langle x,y \mid x^{2} = y^{3} \rangle.
    \]
    Take the word $u_{1} = xyxyxy^{-1}\Delta^{3} \in \overline{\mathcal{A}}$, which is of Type $(1)$. Here $u_{1}$ is conjugate to $v_{1} = yxy^{-1}xyx\Delta^{3}$, which is a cyclic permutation of $u_{1}$ (up to the Garside element) by:
    \[ w^{-1}u_{1}w = (xyx)^{-1}\cdot xyxyxy^{-1}\Delta^{3}\cdot (xyx) =_{G} yxy^{-1}xyx\Delta^{3} = v_{1}.
    \]
    Now consider the word $u_{2} = x^{-1}yx^{-1}y^{-1}xy \in \overline{\mathcal{B}}$, which is of Type $(3^{0*})$. Here $u_{2}$ is conjugate to $v_{2} = y^{-1}x^{-1}yx^{-1}yx \in \overline{\mathcal{B}}$, which is a split cyclic permutation of $u_{2}$, by:
    \[ w^{-1}u_{2}w = \left(x^{-1}yx^{-1}\right)^{-1}\cdot x^{-1}yx^{-1}y^{-1}xy \cdot \left(x^{-1}yx^{-1}\right) =_{G} y^{-1}xyx^{-1}yx^{-1} =_{G} y^{-1}x^{-1}yx^{-1}yx = v_{2}.
    \]
\end{exmp}

\section{Even dihedral Artin groups: $G(2p)$}\label{even conj geos}
We now consider $G(m)$ for $m$ even, i.e. 
$G(m) \cong \langle x,y \; | \; y^{-1}x^{p}y = x^{p} \rangle,$
where $p=\frac{m}{2} \geq 2$. Let $\Delta = x^{p}$ and note the centre of $G(m)$ is generated by $\Delta$. When considering geodesics, we use normal forms derived from \cite{EdjJohn92} and similar to \cite{EdjJohn92}, we split cases for when $p$ is even or odd. 
\subsection{Even dihedral groups $G(2p)$: Case $p=2k\geq 4$}\label{sec:2k}
In this section we assume $p = 2k \geq 4$, and provide details of the case $p=2$ in \cref{subsec: p=2}. We collect all types of geodesics in Tables \ref{table:BS(2,2)} and \ref{tab:BS(p,p)}, so the reader may skip the details here.
\subsubsection{Classification of geodesics in $G(2p)$, $p=2k\geq 4$ }
For any word in $G(m)=G(4k)$, we can apply the following rewrite rules together with free reduction without changing the group element represented:
\begin{enumerate}
    \item[RR1:] Collect any power of the central term $x^{2k}$ to the right.
    \item[RR2:] If $\sigma > 0 > \delta$ and $\sigma + |\delta| > 2k$, then replace a word $ux^{\delta}vx^{\sigma}w$ 
    ($ux^{\sigma}vx^{\delta}w$ resp.) by $ux^{\delta+2k}vx^{\sigma-2k}w$ ($ux^{\sigma-2k}vx^{\delta+2k}w$ resp.).
    \item[RR3:] If $\sigma > 0 > \delta$ and $\sigma + |\delta| = 2k$, then replace a word $ux^{\delta}vx^{\sigma}w$ by $ux^{\sigma}vx^{\delta}w$.
\end{enumerate}
Note that RR1 and RR3 preserve word length, whilst RR2 decreases word length. 

\begin{prop}\label{prop:geos 2k}
    Every element in $G(4k)$ has a geodesic representative in one of the following (freely reduced) forms:
    \begin{enumerate}
        \item[Type 1.] $(x^{\mu_{1}}y^{\ep_{1}}\dots x^{\mu_{n}}y^{\ep_{n}}\Delta^{c})^{\pm 1}$ where $c, n \geq 1$, $\mu_{1} \in [0, 2k-1] \cap \Z$,  $\mu_{i} \in [1, 2k-1] \cap \Z$ $(2 \leq i \leq n)$, $\ep_{i} \in \Z$ $(1 \leq i \leq n)$, $\ep_{i} \neq 0$ $(1 \leq i \leq n-1)$. 
        \item[Type 2.] $[(x^{\mu_{1j}}y^{\ep_{1j}}\dots y^{\ep_{n_{j}j}})x^{j}(y^{\delta_{1j}}x^{\gamma_{1j}}\dots x^{\gamma_{m_{j}j}})]^{\pm 1}$ for some $k \leq j \leq 2k-1$, where $n_{j}, m_{j} \geq 0$, $\ep_{sj}, \delta_{tj} \in \Z_{\neq 0}$ $(1 \leq s \leq n_{j}, 1 \leq t \leq m_{j})$.
        Three separate cases for different values of $j$:
        \begin{enumerate}
            \item If $j = k$, $\mu_{sk} \in [-(k-1), k] \cap \Z_{\neq 0}, \; \gamma_{tk} \in [-k, k] \cap \Z_{\neq 0}$ $(1 \leq s \leq n_{j}, 1 \leq t \leq m_{j})$.
            \item If $j = 2k-1$, $\mu_{s(2k-1)} \in [1, 2k-1] \cap \Z, \; \gamma_{t(2k-1)} \in [-1, 2k-2] \cap \Z_{\neq 0}$ \\ $(1 \leq s \leq n_{j}, 1 \leq t \leq m_{j})$.
            \item If $k < j < 2k-1$, $\mu_{sj} \in [-(2k-j-1), j] \cap \Z_{\neq 0}, \; \gamma_{tj} \in [-(2k-j), j-1] \cap \Z_{\neq 0}$ $(1 \leq s \leq n_{j}, 1 \leq t \leq m_{j})$.
        \end{enumerate}
        In all cases, $x^{\mu_{1j}}, x^{\gamma_{m_{j}j}}$ may also equal 0.
        \item[Type 3.] $x^{\alpha}y^{\ep_{1}}x^{\nu_{1}}\dots x^{\nu_{n}}$ where $n \geq 0$, $\ep_{i} \in \Z_{\neq 0}$ $(1 \leq i \leq n)$, $\alpha, \nu_{n} \in [-(k-1), k-1] \cap \Z$, $\nu_{i} \in [-(k-1), k-1] \cap \Z_{\neq 0}$ $(1 \leq i \leq n-1)$.
    \end{enumerate}
    Moreover, Types 1,2 and 3 are mutually exclusive.
\end{prop}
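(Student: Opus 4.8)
The plan is to establish the classification by normalising an arbitrary word with the rewriting system $\mathcal{R}$ consisting of free reduction together with RR1, RR2 and RR3, then reading off the shapes an $\mathcal{R}$-irreducible word can have, and finally checking that these shapes are geodesic and pairwise disjoint; the underlying geodesic normal forms are those of \cite{EdjJohn92}, so part of the work is a repackaging. Termination of $\mathcal{R}$ is the first point: free reduction and RR2 strictly decrease word length, while RR1 and RR3 preserve it, so I would attach to length-preserving words a secondary well-founded measure --- the sum of the positions of the central blocks $x^{\pm 2k}$ not yet collected at the right (strictly decreased by RR1), together with the number of pairs consisting of a negative $x$-block and a positive $x$-block lying to its right (strictly decreased by RR3) --- and argue that $\mathcal{R}$ terminates for the lexicographic order on (length, this measure). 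Consequently every $g\in G(4k)$ has an $\mathcal{R}$-irreducible representative $w$ no longer than any given word for $g$; after RR1 such a $w$ has the form $x^{a_0}y^{\ep_1}x^{a_1}\cdots y^{\ep_n}x^{a_n}\Delta^{c}$ (with the $y$-syllables maximally grouped), in which every $|a_i|<2k$ and $\Delta^{c}$ is the rightmost syllable.

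Next I would read off the possible shapes. Irreducibility under RR2 forces $\max(\text{positive }a_i)+\max(|\text{negative }a_i|)\le 2k$, and irreducibility under RR3 additionally forbids a negative $x$-block lying to the left of a positive one whose exponents sum to $2k$. Three cases arise. If $c\neq 0$ there can be no $x$-block of the sign opposite to $c$, since such a block together with $\Delta^{c}$ would trigger RR2; hence (after replacing $w$ by $w^{-1}$ when $c<0$) all $a_i\ge 0$ and we are in Type 1, the possible vanishing of $a_0$ or of $\ep_n$ corresponding to a word that starts with $y$ or whose trailing $x$-syllable has been absorbed into $\Delta^{c}$. If $c=0$ and every $|a_i|\le k-1$, this is exactly Type 3 (with the end exponents $a_0,a_n$ allowed to vanish). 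Otherwise $c=0$ and some $|a_i|\ge k$: let $x^{\pm j}$ be the rightmost $x$-block of maximal absolute value, so $j\in[k,2k-1]$, and pass to $w^{-1}$ if needed so this block is $x^{j}$. The RR2 bound then gives $|a_i|\le 2k-j$ for every other block; the RR3 restriction sharpens this to $|a_i|\le 2k-j-1$ for negative blocks to the left of $x^{j}$ (a negative block of absolute value $2k-j$ there is RR3- or RR2-reducible against $x^{j}$), while a negative block of absolute value $2k-j$ is permitted to the right of $x^{j}$; together with the choice of $x^{j}$ as the rightmost maximal block this is precisely the Type 2 template, the three sub-cases $j=k$, $k<j<2k-1$, $j=2k-1$ recording which of $\pm(2k-j)$ and $\pm j$ fall inside the permitted ranges.

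It remains to see these forms are geodesic and mutually exclusive. Each template is itself $\mathcal{R}$-irreducible: in Types 1 and 3 there is no negative--positive $x$-pair, so RR2 and RR3 are vacuous, and the only $x$-block of absolute value $\ge 2k$ (the factor $\Delta^{c}$ in Type 1) is already collected at the far right, so RR1 does not apply either; Type 2's exponent bounds were read off exactly so that neither RR2 nor RR3 applies. Conversely, if $v$ is a geodesic for $g$ then normalising $v$ by $\mathcal{R}$ can only use the length-preserving rules RR1 and RR3 --- a single use of free reduction or RR2 would contradict $|v|=|g|$ --- so $v$ reduces to an $\mathcal{R}$-irreducible word of the same length; since all $\mathcal{R}$-irreducible representatives of a fixed element have a common length (this can be extracted from the normal-form theorem of \cite{EdjJohn92}, and also follows from the HNN decomposition of $G(4k)$ with base $\langle x\rangle\cong\Z$, associated subgroups both $\langle x^{2k}\rangle$ and stable letter $y$, in which the number of $y$-syllables is an invariant), $\mathcal{R}$-irreducible is the same as geodesic, and by the previous paragraph every element has such a geodesic in one of the three forms. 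Finally the templates are pairwise disjoint as sets of freely reduced words, because a Type 1 word ends in an $x$-block of length $\ge 2k$, a Type 2 word has largest $x$-block of length in $[k,2k-1]$, and a Type 3 word has every $x$-block of length $\le k-1$; that no element has geodesics of two different types is the uniqueness statement of \cite{EdjJohn92}.

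The main obstacle is the Type 2 case of the shape analysis: making a canonical, inversion-consistent choice of the distinguished block $x^{j}$, and verifying that $\mathcal{R}$-irreducibility translates exactly into the asymmetric left/right exponent ranges of Type 2(a)--(c) --- in particular handling the $j=k$ boundary, where RR3 rather than RR2 is the operative constraint --- instead of into ranges that are slightly too large or too symmetric. A secondary difficulty is the ``common length of $\mathcal{R}$-irreducible representatives'' used above; the most economical route is to take it, together with uniqueness, from \cite{EdjJohn92}.
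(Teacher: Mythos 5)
Your route to the \emph{existence} of the three templates is genuinely different from the paper's: you rebuild the normal forms from scratch by running the rewriting system $\mathcal{R}=\{\text{free reduction, RR1, RR2, RR3}\}$ to termination and reading off the shapes of irreducible words, whereas the paper simply imports the classification from \cite{EdjJohn92} (correcting a small error there, see \cref{sec: error info}) and only observes that each listed word is HNN-reduced. Your termination measure and the case analysis on the distinguished block $x^{j}$ are workable, and you correctly flag the $j=k$ boundary and the left/right asymmetry forced by RR3 as the delicate points. Note, however, that your identification of ``$\mathcal{R}$-irreducible'' with ``geodesic'' rests on the claim that all irreducible representatives of an element have a common length, which you also end up taking from \cite{EdjJohn92}; so on this half you do more work than the paper without becoming independent of the reference.

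The genuine gap is in the mutual exclusivity claim, which is the only part the paper actually proves. You establish that the three templates are pairwise disjoint as sets of freely reduced words (via the size of the largest $x$-block), but the proposition's assertion is about group elements: no element has geodesic representatives of two different types (indeed, within Type 1 the paper even shows two such words representing the same element are literally equal). For that you defer to ``the uniqueness statement of \cite{EdjJohn92}'', and no such statement is available: geodesics in $G(4k)$ are not unique (Type $2b$ words are non-unique geodesics, \cref{prop:even unique non unique}), and the case analysis of \cite{EdjJohn92} contains the very error the paper has to correct, so exclusivity cannot safely be outsourced there. The paper supplies the missing argument directly: for each pair of types one considers $gh^{-1}$, applies Britton's lemma for the HNN-extension with associated subgroup $\langle x^{2k}\rangle$ to force the syllables to cancel one by one (each $x$-exponent matching modulo $2k$, hence either exactly or up to a leftover central factor, given the stated ranges), and derives either $g=h$ literally or a contradiction with torsion-freeness from a residual power such as $\Delta^{d-c}$ or $x^{-2k(c+d)}$. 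You would need to add this, or an equivalent element-level argument; word-level disjointness of the templates does not substitute for it.
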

The proof of this result can be found in \cref{appen2: even}. We now propose an alternative form for Type 2 geodesics from \cref{prop:geos 2k}. The idea is to gather all $x^{j}$ and $x^{-(2k-j)}$ terms, and note that all $x^{j}$ terms must precede all $x^{-(2k-j)}$ terms in the word due to RR3. 
\begin{defn}\label{rmk:alternative}
    Let $\tau = \tau_{1}+\tau_{2}$. Any Type 2 geodesic from \cref{prop:geos 2k} can be written in the form $w$ as follows, for all $k \leq j \leq 2k-1$:
    \begin{equation}\label{eq:any j}
    w = A_{1}x^{j}A_{2}x^{j}\dots A_{\tau_{1}}x^{j}A_{\tau_{1}+1}x^{-(2k-j)}A_{\tau_{1} + 2}x^{-(2k-j)}\dots x^{-(2k-j)}A_{\tau_{1} + \tau_{2} + 1},
\end{equation}
    where $A_{s}$ $(1 \leq s \leq \tau + 1)$ are reduced words over $x,y$, with any maximal subword of the form $x^{\alpha}$ in $ A_{s}$ satisfying $\alpha \in [-2k+j+1, j-1] \cap \mathbb{Z}_{\neq 0}$. Here $\tau_{1} \geq 1, \tau_{2} \geq 0$, $A_{1}$ ($A_{\tau + 1}$ resp.) is either empty or ends (starts) with $y^{\beta}$ for some $\beta \in \Z_{\neq 0}$, and $A_{s}$ blocks start and end with $y^{\beta}$ for some $\beta \in \Z_{\neq 0}$ $(2 \leq s \leq \tau)$.

     In this form, we split Type 2 geodesics into two disjoint sets, denoted Type $2a \; \sqcup$ Type $2b$, where Type $2a$ is the set of all Type 2 geodesics where $\tau_{2} = 0$, i.e. of the form 
     \[ w = A_{1}x^{j}A_{2}x^{j}\dots A_{\tau_{1}}x^{j}A_{\tau_{1}+1},
     \]
    and Type $2b$ is the set of all remaining Type 2 geodesics with $\tau_{2} \geq 1$, i.e. of the form
    \[ w =_{G} A_{1}x^{j}A_{2}x^{j}\dots A_{\tau_{1}}x^{j}A_{\tau_{1}+1}x^{-(2k-j)}A_{\tau_{1} + 2}x^{-(2k-j)}\dots x^{-(2k-j)}A_{\tau_{1} + \tau_{2} + 1}.
    \]

    \comm{
    In all cases $n, m \geq 1$, $A_{1}$ and $ B_{m}$ may be empty $(m \geq 2)$, and $A_{s}, B_{t}$ start and end with $y^{\beta}$ for some $\beta \in \Z \setminus \{0\}$ $(2 \leq s \leq n, 1 \leq t \leq m-1)$.
    
    (a) If $j = k$, then 
\begin{equation}\label{eq:j=k}
    w = (A_{1}x^{k}A_{2}x^{k}\dots A_{n}x^{k})(B_{1}x^{-k}B_{2}x^{-k}\dots x^{-k}B_{m}),
\end{equation}
where
     $A_{s}, B_{t}$, $(1 \leq s \leq n, 1 \leq t \leq m)$, are reduced words over $x,y$, with any maximal subword of the form $x^{\alpha}$ in $ A_{s}, B_{t}$ satisfying $\alpha \in \{\pm 1, \dots, \pm (k-1)\}$ , 

(b) If $j=2k-1$ then
\begin{equation}
    w = (A_{1}x^{2k-1}A_{2}x^{2k-1}\dots A_{n}x^{2k-1})(B_{1}x^{-1}B_{2}x^{-1}\dots x^{-1}B_{m}),
\end{equation}
where
$A_{s}, B_{t}$, $(1 \leq s \leq n, 1 \leq t \leq m)$, are reduced words over $x,y$, with any maximal subword of the form $x^{\alpha}$ in $ A_{s}, B_{t}$ satisfying $\alpha \in \{1, \dots, 2k-2\}$ $(1 \leq s \leq n, 1 \leq t \leq m)$, 

(c) If $k < j < 2k-1$ then
\begin{equation}\label{eq:any j}
    w = (A_{1}x^{j}A_{2}x^{j}\dots A_{n}x^{j})(B_{1}x^{-(2k-j)}B_{2}x^{-(2k-j)}\dots x^{-(2k-j)}B_{m}),
\end{equation}
where $A_{s}, B_{t}$, $(1 \leq s \leq n, 1 \leq t \leq m)$, are reduced words over $x,y$, with any maximal subword of the form $x^{\alpha}$ in $ A_{s}, B_{t}$ satisfying $\alpha \in \{\pm 1, \dots \pm 2k-j-1, 2k-j, \dots, j-1\}$ $(1 \leq s \leq n, 1 \leq t \leq m)$, 
   }
\end{defn}

\subsubsection{Minimal length conjugacy representatives in $G(4k)$}
We now describe conjugacy geodesic representatives. To do this, we split geodesics into sets $\mathcal{C}$ and $\mathcal{D}$: Let 
     $\mathcal{C}_{1}$ be the set of all Type 1 geodesics,
   $\mathcal{C}_{2}$ be the set of all Type $2a$ geodesics, and
     $\mathcal{C}_{3}$ be the set of all Type 3 geodesics, 
and define $\mathcal{C}$ as $\mathcal{C} := \mathcal{C}_{1} \sqcup \mathcal{C}_{2} \sqcup \mathcal{C}_{3,}$ whilst
$\mathcal{D}$ is the set of all remaining geodesics, i.e. all Type $2b$ geodesics. The proof of the following result can be found in \cref{appen2: even}.

\begin{prop}\label{prop:even unique non unique}
    The set $\mathcal{C}$ consists of unique geodesics only, whereas $\mathcal{D}$ consists of non-unique geodesics. 
\end{prop}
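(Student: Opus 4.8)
The plan is to run everything through the rewriting system RR1--RR3 and the classification of geodesics in \cref{prop:geos 2k} (in the reformulation of \cref{rmk:alternative} for Type~2), using the facts that RR2 and genuine free reduction strictly shorten a word while RR1 (relocating the central block $\Delta=x^{2k}$) and RR3 preserve word length, and that iterating these rules carries any word to a Type~1, 2 or~3 normal form. I will also use the consequence of the proof of \cref{prop:geos 2k} (see \cref{appen2: even}) that the rewriting is confluent on geodesics, so that two geodesics representing the same element are linked by a finite sequence of (forward or backward) applications of the length-preserving rules RR1, RR3.

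For $\mathcal{D}$: take a Type~$2b$ geodesic $w$ written as in \cref{eq:any j}. Since $\tau_2\ge 1$ by definition of Type~$2b$, $w$ contains a syllable $x^{j}$ and a syllable $x^{-(2k-j)}$, with all of the former to the left of all of the latter. Pick the last $x^{j}$ and the first $x^{-(2k-j)}$; the block $A$ between them is nonempty (otherwise the two $x$-syllables would merge and $w$ would not be a freely reduced geodesic). Since $j+(2k-j)=2k$ and $\Delta$ is central, RR3 applies and $x^{j}Ax^{-(2k-j)}=_{G}x^{-(2k-j)}Ax^{j}$; substituting this into $w$ yields a freely reduced word $w'\neq w$ with $w'=_{G}w$ and $|w'|=|w|$. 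Hence $w$ is a non-unique geodesic, which is the claim for $\mathcal{D}$.

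For $\mathcal{C}$: let $w\in\mathcal{C}$ (Type~1, $2a$ or~3) and let $v=_{G}w$ be any other geodesic. By the linking statement above, $v$ is obtained from $w$ through length-preserving moves RR1, RR3, so it suffices to show that no nontrivial such move (in either direction) is available from $w$. RR3 requires two syllables $x^{\sigma},x^{\delta}$ with $\sigma>0>\delta$ and $\sigma+|\delta|=2k$; in Type~3 every $x$-exponent has absolute value $\le k-1$, in Type~1 every $x$-exponent is $\ge 0$, and in Type~$2a$ the only positive syllable reaching $k$ is $x^{j}$ with $j\le 2k-1$ while negative syllables have absolute value $\le 2k-j-1$, so $\sigma+|\delta|\le 2k-1$; in every case RR3 is excluded. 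For RR1 one must check that the central block cannot be relocated so as to produce a distinct word of the same length: in Types~$2a$ and~3 there is no central block at all (using $G(4k)/\langle\Delta\rangle\cong\Z_{2k}\ast\Z$ to see that such a word has trivial Garside part), and in Type~1 the block $\Delta^{c}$ is pinned in its normal-form position at the right. Therefore $v=w$, and every word in $\mathcal{C}$ is a unique geodesic.

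The crux is the $\mathcal{C}$ direction, and within it the RR1 analysis for Type~1 words: one has to argue that the Garside block $\Delta^{c}$ cannot slide through the intervening $x,y$ syllables to produce another geodesic of equal length (equivalently, that the normal-form placement of the central part is forced). The $\mathcal{D}$ direction and the unavailability of RR3 for words in $\mathcal{C}$ are, by contrast, routine syllable bookkeeping once the rewriting framework is in place.
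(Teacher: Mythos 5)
Your proposal takes essentially the same route as the paper's proof: the paper asserts that RR3 is the only rewrite rule carrying a geodesic to another geodesic and then performs exactly your syllable-exponent checks (Type 1 exponents all of one sign; Type $2a$ gives $\sigma+|\delta|\le j+(2k-j-1)=2k-1<2k$; Type 3 gives $\sigma+|\delta|\le 2k-2$; and for Type $2b$ an $x^{j}$/$x^{-(2k-j)}$ pair lets RR3 produce a distinct word of equal length). The only difference is that you make explicit the linking/confluence premise and the RR1 discussion that the paper leaves implicit --- be aware that the appendix proves mutual exclusivity of the types via HNN-reduction rather than confluence of the rewriting system, so your citation of that fact is really the same unproved assertion the paper itself relies on.
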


\begin{defn}\label{defn:C bar}
    We define the following subset of $\mathcal{C}$:
    \[
    \overline{\mathcal{C}} = 
    \begin{cases}
        w = (x^{\mu_{1j}}y^{\ep_{1}}\dots x^{\mu_{n}}y^{\ep_{n}}\Delta^{c})^{\pm 1} & w \in \mathcal{C}_{1}, \; \mu_{1j} = 0 \Leftrightarrow \ep_{n} = 0, \\
        w = x^{\mu_{1j}}y^{\ep_{1j}}\dots y^{\ep_{n_{j}j}} & w \in \mathcal{C}_{2}, \; \mu_{1j} = 0 \Leftrightarrow \ep_{n_{j}j} = 0,\\
        w = x^{\mu_{1}}y^{\ep_{1}}\dots y^{\ep_{n}} & w \in \mathcal{C}_{3}, \; \mu_{1} = 0 \Leftrightarrow \ep_{n} = 0.
    \end{cases}
    \]
For notation, we let $\overline{\mathcal{C}} = \overline{\mathcal{C}_{1}} \sqcup \overline{\mathcal{C}_{2}} \sqcup \overline{\mathcal{C}_{3}}$, where $\overline{\mathcal{C}_{i}}=\mathcal{C}_{i} \cap \overline{\mathcal{C}}$, for all $1 \leq i \leq 3$.
\end{defn}

\begin{prop}\label{prop:conj geos cyc perm}
Let $w,v \in \overline{\mathcal{C}}$ have the form
    \[ w = x^{\alpha_{1}}y^{\beta_{1}}\dots x^{\alpha_{n_{w}}}y^{\beta_{n_{w}}}\Delta^{r}, \quad v = x^{\sigma_{1}}y^{\gamma_{1}}\dots x^{\sigma_{n_{v}}}y^{\gamma_{n_{v}}}\Delta^{q}.
    \]
    Then $w \sim v$ if and only if $n_{w}=n_{v}, \; r=q$ and the words $x^{\alpha_{1}}y^{\beta_{1}}\dots x^{\alpha_{n_{w}}}y^{\beta_{n_{w}}}$ and $x^{\sigma_{1}}y^{\gamma_{1}}\dots x^{\sigma_{n_{v}}}y^{\gamma_{n_{v}}}$ are cyclic permutations of each other.  
\end{prop}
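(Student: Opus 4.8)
The plan is to mirror the argument of \cref{prop:prod equal}, with the roles of the two generators exchanged: here $\Delta=x^{2k}$ is central, so it is the $x$-syllables — whose exponents are pinned down by the rewriting rules RR1--RR3 — that play the part the $y$-syllables played in the odd case, while $y$-syllables are unconstrained. The reverse implication is immediate: a cyclic permutation of the core is realised by conjugating by a suitable prefix, and the central factor $\Delta^{r}=\Delta^{q}$ contributes nothing, so $w\sim v$; conversely $n_{w}=n_{v}$ and $r=q$ are then read off directly.

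For the forward direction, write $v=u^{-1}wu$ with $u=x^{a_{1}}y^{b_{1}}\cdots x^{a_{\tau}}y^{b_{\tau}}\Delta^{c}$ a geodesic. The central part $\Delta^{c}$ of $u$ commutes through $w$ and cancels, so it never affects the Garside power; any difference between the Garside powers of $w$ and of $v$ must be produced while freely reducing $u^{-1}wu$. Since every element of $\overline{\mathcal{C}}$ begins and ends with syllables from different factors (one an $x$-block, the other a $y$-block — this is exactly the condition $\mu_{1}=0\Leftrightarrow\ep_{n}=0$ in \cref{defn:C bar}), like-syllable concatenation occurs at exactly one of the two junctions $u^{-1}\!\cdot w$ and $w\cdot u$; by symmetry suppose it is at $u^{-1}\!\cdot w$. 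If the adjoining syllables are powers of $y$, then — $y$ obeying no relation and the $y$-exponents in Types 1, 2a and 3 being unbounded — they either cancel or merge into a single nonzero power, and in both cases $v$ is, after cyclic reduction, a genuine cyclic permutation of $w$ lying in $\overline{\mathcal{C}}$, of the same Type and with the same $n$ and the same Garside power. If the adjoining syllables are powers of $x$, the merged exponent may fall outside the admissible window for that Type; exactly as in \cref{prop:prod equal} one then uses RR1--RR3 to shift a copy of $x^{\pm 2k}=\Delta^{\pm 1}$ onto the Garside element and immediately undoes this by absorbing a compensating $x^{\mp 2k}$ drawn from $u$ on the opposite side — available precisely because $u$ is geodesic — landing once more in a word of the same Type as $w$, with the same Garside power, that is a cyclic permutation of $w$. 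Assembling the cases, $v$ equals a cyclic permutation of $w$ written in $\overline{\mathcal{C}}$-form; hence $n_{w}=n_{v}$, $r=q$, and the cores are cyclic permutations of one another.

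The crux is the $x$-concatenation case, which must be run through Type by Type, since Types 1, 2a and 3 impose different exponent windows. The key point, as in \cref{prop:prod equal}, is that whenever a merged $x$-exponent forces a Garside shift, that shift is reversible: a compensating $x^{\mp 2k}$ is supplied by $u$, available precisely because $u$ is geodesic. For Types 2a and 3, which carry no Garside element at all, this is exactly what guarantees that conjugation cannot manufacture a central power and eject the word from its Type (so the assumption $v\in\overline{\mathcal{C}}$ is consistent). One also has to check that after each reshuffle the boundary condition of \cref{defn:C bar} is restored, which is a routine cyclic reduction. Modulo this bookkeeping, the proof is the same as in the odd case.
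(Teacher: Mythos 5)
Your proposal follows essentially the same route as the paper's proof: the reverse direction is immediate, and for the forward direction one analyses the single junction where like syllables concatenate (this is where the boundary condition of \cref{defn:C bar} is used), shifts $x^{\pm 2k}$ onto the Garside element and immediately reabsorbs it into the copy of the conjugator on the other side, concluding that $u^{-1}wu$ is, up to cyclic reduction, a cyclic permutation of $w$ of the same type, with the Type-by-Type check of the exponent windows left as routine. The only cosmetic difference is that the paper conjugates by an arbitrary (not necessarily geodesic) $u$ written in syllable form — the compensating $x^{\mp 2k}$ is available simply because $u$ appears on both sides of $w$, not because $u$ is geodesic — but this does not affect the argument.
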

\begin{proof}
The proof follows a similar method to \cref{prop:prod equal}. Suppose $w \in \overline{\mathcal{C}_{1}}$ and, without loss of generality, assume $r>0$. We consider conjugating $w$ by an element $u = x^{a_{1}}y^{b_{1}}\dots x^{a_{\tau}}y^{b_{\tau}}\Delta^{c}$, where $a_{i} \in [-(2k-1), 2k-1] \cap \Z_{\neq 0}, \; b_{j} \in \Z_{\neq 0}, \; (1 \leq i \leq \tau, 1 \leq j \leq \tau),$ and $c \in \Z_{\neq 0}.$ Note $u$ need not be geodesic. 

We can immediately ignore concatenation of $y$ letters as these do not alter the Garside element. Indeed if $y$ letters concatenate, then this is equivalent to a cyclic permutation. Therefore, up to free cancellation where possible, suppose
\[ u^{-1}wu =_{G} y^{-b_{\tau}}\dots x^{-a_{1}}x^{\alpha_{1}}\dots y^{\beta_{n_{w}}}x^{a_{1}}\dots y^{b_{\tau}}\Delta^{r}.
\]
Since $w \in \overline{\mathcal{C}_{1}}$ and $r>0$, we can assume $\alpha_{1} \in \{1, 2, \dots, 2k-1 \}$, and so $-(2k-2) \leq \alpha_{1}-a_{1} \leq 4k-2$. Suppose $\alpha_{1}-a_{1} = 2k + d$ for some $d \in \{0,1,\dots, 2k-2\}$. We can move $x^{2k}$ to the Garside element as follows:
\[ u^{-1}wu =_{G} y^{-b_{\tau}}\dots x^{d}x^{2k}\dots y^{\beta_{n_{w}}}x^{a_{1}}\dots y^{b_{\tau}}\Delta^{r} =_{G} y^{-b_{\tau}}\dots x^{d}\dots y^{\beta_{n_{w}}}x^{\alpha_{1}-2k-d}\dots y^{b_{\tau}}\Delta^{r+1}.
\]
We can then reverse this procedure by moving $x^{-2k}$ from the $x^{a_{1}}$ term:
\begin{align*}
    u^{-1}wu &=_{G} y^{-b_{\tau}}\dots x^{d}x^{2k}\dots y^{\beta_{n_{w}}}x^{a_{1}}\dots y^{b_{\tau}}\Delta^{r} \\
    &=_{G} y^{-b_{\tau}}\dots x^{d}\dots y^{\beta_{n_{w}}}x^{\alpha_{1}-2k-d}\dots y^{b_{\tau}}\Delta^{r+1} \\
    &=_{G} y^{-b_{\tau}}\dots x^{d}\dots y^{\beta_{n_{w}}}x^{\alpha_{1}-d}\dots y^{b_{\tau}}\Delta^{r}.
\end{align*}
Now $u^{-1}wu$ is a cyclic permutation of $w$, up to cyclic reduction, as required. For all other values of $\alpha_{1} - a_{1}$, we have that $u^{-1}wu$ is equal to $w$ up to cyclic reduction. A symmetric argument holds when $x$ letters concatenate in $wu$, which completes the proof for words in $\overline{\mathcal{C}_{1}}$. The proof for $\overline{\mathcal{C}_{2}}$ and $\overline{\mathcal{C}_{3}}$ follow a similar case by case argument. 
\comm{
If $\alpha_{1}-a_{1} \in \{1, \dots, p-1\}$, then $u^{-1}wu \in \overline{\mathcal{C}_{1}}$ up to cyclic reduction. If $\alpha_{1}-a_{1} = p + d$ for some $d \in \{0,1,\dots, p-2\}$, then we have 
\[ u^{-1}wu = y^{-b_{\tau}}\dots x^{d}x^{p}\dots y^{\beta_{n}}x^{a_{1}}\dots y^{b_{\tau}}\Delta^{r} = y^{-b_{\tau}}\dots x^{d}\dots y^{\beta_{n}}x^{\alpha_{1}-p-d}\dots y^{b_{\tau}}\Delta^{r+1}
\]
After moving the $x^{p}$ term to the Garside, we can then reverse this procedure by moving $x^{-p}$ from the $x^{a_{1}}$ term:
\begin{align*}
    u^{-1}wu &= y^{-b_{\tau}}\dots x^{d}x^{p}\dots y^{\beta_{n}}x^{a_{1}}\dots y^{b_{\tau}}\Delta^{r} \\
    &= y^{-b_{\tau}}\dots x^{d}\dots y^{\beta_{n}}x^{\alpha_{1}-p-d}\dots y^{b_{\tau}}\Delta^{r+1} \\
    &= y^{-b_{\tau}}\dots x^{d}\dots y^{\beta_{n}}x^{\alpha_{1}-d}\dots y^{b_{\tau}}\Delta^{r}
\end{align*}
Now $u^{-1}wu$ is, up to cyclic reduction, a cyclic permutation of $w$ as required. 
\par 
Finally suppose $\alpha_{1}-a_{1} \in \{-1, \dots, -(p-2)\}$. We have two options for $u^{-1}wu$: we can directly cyclically reduce to get our result, or we could take out an $x^{-p}$ term from $x^{\alpha_{1}-a_{1}}$ in order to stay in $\overline{\mathcal{C}_{1}}$. Then we would have
\[ u^{-1}wu = y^{-b_{\tau}}\dots x^{\alpha_{1}-a_{1}+p}y^{\beta_{1}}\dots y^{\beta_{n}}x^{a_{1}}\dots y^{b_{\tau}}\Delta^{r-1}
\]
After cyclic reduction, we are left with
\[ u^{-1}wu = x^{\alpha_{1}+p}y^{\beta_{1}}\dots y^{\beta_{n}}\Delta^{r-1}
\]
Since $\alpha_{1}>0$, we have to move $x^{p}$ back to the Garside to get
\[ u^{-1}wu = x^{\alpha_{1}}y^{\beta_{1}}\dots y^{\beta_{n}}\Delta^{r} = w
\]}
\end{proof}
Now we consider cyclic geodesics in the set $\mathcal{D}$. Recall $\mathcal{D}$ is the set of all Type 2 geodesics where $m_{j} \geq 1$. For words in $\mathcal{D}$, we do not have the same equivalent geodesic types up to cyclic permutation. For example, recall \cref{eq:any j} and suppose we have
\[ w = A_{1}x^{j}A_{2}x^{j}\dots A_{\tau_{1}}x^{j}A_{\tau_{1}+1}x^{-(2k-j)}A_{\tau_{1} + 2}x^{-(2k-j)}\dots x^{-(2k-j)}A_{\tau_{1} + \tau_{2} + 1}.
\]
Consider the cyclic permutation
\[ w' = x^{-(2k-j)}A_{\tau + 1}A_{1}x^{j}A_{2}x^{j}\dots A_{\tau_{1}}x^{j}A_{\tau_{1} + 1}x^{-(2k-j)}A_{\tau_{1} + 2}x^{-(2k-j)}\dots A_{\tau}.
\]
This word is no longer of the correct geodesic form, since we can apply RR3 to $w'$ to get 
\[ w' =_{G} x^{j}A_{\tau + 1}A_{1}x^{j}A_{2}x^{j}\dots A_{\tau_{1}}x^{-(2k-j)}A_{\tau_{1} + 1}x^{-(2k-j)}A_{\tau_{1}+2}x^{-(2k-j)}\dots A_{\tau}.
\]
This rewrite can only occur when we cyclically permute an $x^{j}$ or $x^{-(2k-j)}$ term. Therefore any cyclic permutation of $w$ written in geodesic normal form is a cyclic permutation of elements in $(A_{1},\dots, A_{\tau + 1})$, with the $A_i$s separated by $X = \left(x^{j}, \dots x^{j}, x^{-(2k-j)}, x^{-(2k-j)}, \dots x^{-(2k-j)}\right)$, where $X$ is fixed. This idea holds for all $k \leq j \leq 2k-1$. 
\begin{defn}\label{def:even_split_reps}
Let $\tau = \tau_{1}+\tau_{2}$, let $k \leq j \leq 2k-1$.
\begin{itemize}
    \item[(i)] 
    For $\tau_{1}, \tau_{2} \geq 1$, define a tuple $X_{\tau_{1},\tau_{2}}$ of powers of $x$ with $a_i \in \{j, -(2k-j)\}$ where:
    \[X_{\tau_{1},\tau_{2}} = (x^{a_{1}}, x^{a_{2}}, \dots, x^{a_{\tau}}) = (\underbrace{x^{j}, x^{j}, \dots, x^{j}}_{\tau_{1}}, \underbrace{x^{-(2k-j)}, x^{-(2k-j)}, \dots, x^{-(2k-j)}}_{\tau_{2}}).
    \]
    Consider a geodesic $w \in \mathcal{D}$ based on $X_{\tau_{1},\tau_{2}}$ of the form
    \begin{align*}
        w &= A_{1}x^{a_{1}}A_{2}x^{a_{2}}\dots A_{\tau_{1}}x^{a_{\tau_{1}}}A_{\tau_{1}+1}x^{a_{\tau_{1}+1}}A_{\tau_{1}+2}x^{a_{\tau_{1}+2}}\dots x^{a_{\tau}}A_{\tau + 1} \\
        &= A_{1}x^{j}A_{2}x^{j}\dots A_{\tau_{1}}x^{j}A_{\tau_{1}+1}x^{-(2k-j)}A_{\tau_{1}+2}x^{-(2k-j)}\dots x^{-(2k-j)}A_{\tau + 1},
    \end{align*}
    where the $A_{s}$ blocks are all words over $x,y$ satisfying the conditions from \cref{eq:any j}.
  \item[(ii)]  
    A \emph{split cyclic permutation} of $w$ is one where the tuple $X_{\tau_{1},\tau_{2}}$ is preserved and $A_{s}$ blocks are cyclically permuted: it is a geodesic $w' \in \mathcal{D}$ as in: \\
    Case 1: all $A_{s}$ blocks are preserved, and for $1 \leq t \leq \tau+1$
    \[ w' = x^{a_{1}}A_{t}x^{a_{2}}A_{t+1}\dots x^{a_{\tau - t + 2}}A_{\tau + 1}A_{1}x^{a_{\tau -t+3}}A_{2}\dots x^{a_{\tau}}A_{t-1}.
    \]
    Case 2: one of the $A_{s}$ blocks is split, that is, for $1 \leq t \leq \tau+1$ write $A_{t} = A_{t_{1}}A_{t_{2}}$ as a reduced product of prefix and suffix. Then 
    \[  w' = A_{t_{2}}x^{a_{1}}A_{t+1}x^{a_{2}}\dots x^{a_{\tau-t+1}}A_{\tau + 1}A_{1}x^{a_{\tau - t + 2}}A_{2}\dots x^{a_{\tau}}A_{t_{1}}.
    \]
\item[(iii)]
Define a set $\overline{\mathcal{D}}$ to be all words in $\mathcal{D}$ of the form in \cref{rmk:alternative}, such that words start and end with opposite factors, i.e. if $A_{1}$ starts with an $x$-term ($y$-term resp.), then $A_{\tau+1}$ must end with a $y$-term ($x$-term resp.). Note $A_{1}$ and $A_{\tau + 1}$ may be empty, but they cannot both be empty - if this were the case, we could cyclically permute the $x^{-(2k-j)}$ which cancels and adds to the Garside element when concatenated with $x^{j}$. 
\end{itemize}
\end{defn}
\begin{prop}\label{prop:sets B}
    Let $w,v \in \overline{\mathcal{D}}$ be based on tuples $X_{\tau_{1},\tau_{2}}$ and $X_{\sigma_{1},\sigma_{2}}$. 
     Then $w \sim v$ if and only if $X_{\tau_{1},\tau_{2}} = X_{\sigma_{1},\sigma_{2}}$, and $w$ and $v$ are equal up to a split cyclic permutation. 
\end{prop}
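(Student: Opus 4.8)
The plan is to adapt, essentially verbatim, the arguments of \cref{prop:prod equal} and \cref{prop:conj geo B} from the odd case together with the even unique-geodesic computation of \cref{prop:conj geos cyc perm}; the only genuinely new ingredient is the possibility of a split cyclic permutation. The reverse implication is immediate from \cref{def:even_split_reps}: a split cyclic permutation of $w$ is obtained from $w$ by cyclically permuting the $A_s$-blocks around the fixed marker tuple $X_{\tau_1,\tau_2}$, splitting at most one block, and then applying RR3 if necessary to restore the normal form of \cref{rmk:alternative}; the block permutation is realised by an explicit conjugator and RR3 by a relator of $G(4k)$, so $w\sim v$.

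For the forward implication I would write $v =_G u^{-1}wu$ with $u = x^{a_1}y^{b_1}\cdots x^{a_\tau}y^{b_\tau}\Delta^c$ (the central $\Delta^c$ commutes out and may be dropped), form $u^{-1}wu$, and carry out free cancellation. Since a word in $\overline{\mathcal{D}}$ begins and ends with letters from opposite free-product factors by part (iii) of \cref{def:even_split_reps}, same-factor concatenation, and hence all the rewriting, takes place on one side only (within $u^{-1}w$ or within $wu$, but not both). I would then split into cases according to which factor concatenates, and for the $x$-factor according to the size of the resulting exponent. If powers of $y$ meet, no central power appears, and up to cyclic reduction $u^{-1}wu$ differs from $w$ only by a cyclic shuffle of the $A_s$-blocks; rewriting into the form of \cref{rmk:alternative} may force an application of RR3 to push a displaced $x^j$ back to the left of the $x^{-(2k-j)}$ block, which is exactly a split cyclic permutation (Case 1 or Case 2 of \cref{def:even_split_reps}, according to whether $u$ reaches into the interior of some $A_s$). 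If powers of $x$ meet with combined exponent in $[-2k+j+1,\,j-1]$, the same conclusion holds directly; and if the combined exponent has absolute value $\ge 2k$, I would peel off a central factor $x^{\pm 2k}=\Delta^{\pm 1}$, transport it freely through the word to the opposite seam, and reabsorb it into the matching $x^{a_1}$-term coming from $u$ — the same cancel-and-reverse manoeuvre carried out in the displayed computation of \cref{prop:conj geos cyc perm}. The net effect is again a split cyclic permutation of the $A_s$-blocks with the tuple $X_{\tau_1,\tau_2}$ unchanged, so in particular $j$, $\tau_1$ and $\tau_2$ are preserved and $X_{\tau_1,\tau_2}=X_{\sigma_1,\sigma_2}$.

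The main obstacle is the bookkeeping in the $x$-concatenation case. One has to check, uniformly over all $k\le j\le 2k-1$ and over all signs and sizes of the concatenated exponent, three things: that the excess over the admissible $A_s$-range is always an integer multiple of $2k$, hence central and transportable; that after reabsorbing it at the opposite seam the word returns to the normal form of \cref{rmk:alternative}, so that no $x^j$ or $x^{-(2k-j)}$ marker is spuriously created or destroyed and only RR1 and RR3 are needed to tidy up; and that the only new phenomenon relative to \cref{prop:conj geos cyc perm} is the splitting of a single $A_s$-block (Case 2 of \cref{def:even_split_reps}). Showing that the multiset of markers is manifestly invariant under each elementary move — free reduction, transport of a central power, RR1 and RR3 — is what actually forces $X_{\tau_1,\tau_2}=X_{\sigma_1,\sigma_2}$, so I would nail that invariance down before working through the subcases.
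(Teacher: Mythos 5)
Your proposal follows essentially the same route as the paper's proof: conjugate by an arbitrary $u$, use the opposite-letters condition from \cref{def:even_split_reps}(iii) to localise same-factor concatenation to one seam, and in the $x$-concatenation case with exponent outside the admissible range, peel off a central $x^{\pm 2k}=\Delta^{\pm 1}$ and reabsorb it at the matching $x^{a_1}$-term, after which restoring the normal form of \cref{rmk:alternative} via RR3 is precisely a split cyclic permutation. The paper's proof is equally terse about the uniform bookkeeping over $j$ and the sign cases, deferring to \cref{prop:conj geos cyc perm} just as you do, so no substantive difference.
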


\begin{proof}
The proof is similar to \cref{prop:conj geos cyc perm}, and note by the discussion above that if we consider a cyclic permutation of $w$, we need to take a split cyclic permutation for $v \in \overline{\mathcal{D}}$. Consider conjugating $w = x^{\alpha_{1}}y^{\beta_{1}}\dots y^{\beta_{n}}$ by an element $u = x^{a_{1}}y^{b_{1}}\dots x^{a_{\tau}}y^{b_{\tau}}\Delta^{c}$, where $a_{i} \in [-(2k-1), 2k-1] \cap \Z_{\neq 0},$ $b_{j} \in \Z_{\neq 0} \; (1 \leq i \leq \tau, 1 \leq j \leq \tau),$ and $c \in \Z_{\neq 0}$. We assume up to free cancellation that 
$ u^{-1}wu =_{G} y^{-b_{\tau}}\dots x^{-a_{1}}x^{\alpha_{1}}\dots y^{\beta_{n}}x^{a_{1}}\dots y^{b_{\tau}}.$
Since $w \in \overline{\mathcal{D}}$, we can assume $\alpha_{1} \in [-(2k-j-1), j] \cap \Z_{\neq 0}$. Also $a_{1} \in [-(2k-1), 2k-1] \cap \Z_{\neq 0}$, and so $-4k+j+2 \leq \alpha_{1}-a_{1} \leq 2k + j -1$. Suppose $\alpha_{1}-a_{1} = 2k+c$ where $c \in \{0, \dots, j-1\}$. Then
\begin{align*}
    u^{-1}wu &=_{G} y^{-b_{\tau}}\dots x^{c}x^{2k}\dots y^{\beta_{n}}x^{a_{1}}\dots y^{b_{\tau}} \\
    &=_{G} y^{-b_{\tau}}\dots x^{c}\dots y^{\beta_{n}}x^{\alpha_{1}-2k-c}\dots y^{b_{\tau}}\Delta \\
    &=_{G} y^{-b_{\tau}}\dots x^{c}\dots y^{\beta_{n}}x^{\alpha_{1}-c}\dots y^{b_{\tau}}.
\end{align*}
Up to cyclic reduction, in order for $u^{-1}wu \in \overline{\mathcal{D}}$, we then have to apply RR3, which is equivalent to a split cyclic permutation of $w$. The proof for $\alpha_{1} - a_{1} \leq -2k$ follows a symmetric proof. For all other values of $\alpha_{1}-a_{1}$, we have that $u^{-1}wu$ is equal to $w$ up to cyclic reduction. A symmetric argument holds when $x$ letters concatenate in $wu$, which completes the proof. 
\comm{ 
If $\alpha_{1}-a_{1} \in \{\pm 1, \dots, \pm (k-1)\}$, then $u^{-1}wu \in \overline{\mathcal{D}}$ up to cyclic reduction. If $\alpha_{1}-a_{1} = k$, we have
\[ u^{-1}wu = y^{-b_{\tau}}\dots x^{k}\dots y^{\beta_{n}}x^{\alpha_{1}-k}\dots y^{b_{\tau}}
\]
Up to cyclic reduction, this is equal to a cyclic permutation of $w$, by moving $x^{\alpha_{1}}$ to the end of the word. 
\par 
Similarly suppose $\alpha_{1}-a_{1} = -k$. This gives us
\[ u^{-1}wu = y^{-b_{\tau}}\dots x^{-k}\dots y^{\beta_{n}}x^{\alpha_{1}+k}\dots y^{b_{\tau}}
\]
Here we have two options. We either cyclically reduce straight away, which results in a cyclic permutation of $w$. Otherwise we can apply RR3 to the $x^{-k}$ and $x^{k}$ terms before cyclically reducing.
\par 
Suppose $\alpha_{1}-a_{1} \in \{\pm (k+1), \dots, \pm (p-1) \}$. If $\alpha_{1}-a_{1} \geq k+1$, then $u^{-1}wu$ is no longer geodesic. Hence we could apply RR2 to the leftmost $x^{-k}$ term (maximal negative power) to get
\begin{align*}
    u^{-1}wu &= y^{-b_{\tau}}\dots x^{\alpha_{1}-a_{1}}\dots x^{-k}\dots y^{\beta_{n}}x^{a_{1}}\dots y^{b_{\tau}} \\
    &= y^{-b_{\tau}}\dots x^{\alpha_{1}-a_{1}-p}\dots x^{k}\dots y^{\beta_{n}}x^{a_{1}}\dots y^{b_{\tau}}
\end{align*}
Up to cyclic reduction, this leaves us with
\[ u^{-1}wu = x^{\alpha_{1}-p}\dots x^{k}\dots y^{\beta_{n}}
\]
Now $\alpha_{1}-p \leq -k$, and so we need to apply RR2 again to get
\[ u^{-1}wu = x^{\alpha_{1}}\dots \dots x^{-k} \dots y^{\beta_{n}} 
\]
which gives us our original $w$ again. A similar argument also holds when $\alpha_{1}-a_{1} \leq -(k+1)$.
\par 
Finally suppose $\alpha_{1}-a_{1} = p+c$ where $c \in \{0, \dots, k-1\}$. Again similar to the proof of \cref{prop:conj geos cyc perm}, we have 
\begin{align*}
    u^{-1}wu &= y^{-b_{\tau}}\dots x^{c}x^{p}\dots y^{\beta_{n}}x^{a_{1}}\dots y^{b_{\tau}} \\
    &= y^{-b_{\tau}}\dots x^{c}\dots y^{\beta_{n}}x^{\alpha_{1}-p-c}\dots y^{b_{\tau}}\Delta \\
    &= y^{-b_{\tau}}\dots x^{c}\dots y^{\beta_{n}}x^{\alpha_{1}-c}\dots y^{b_{\tau}}
\end{align*}

Again up to cyclic reduction, $u^{-1}wu$ is a cyclic permutation of $w$. A similar argument holds for $\alpha_{1}-a_{1} = -p - c$ where $c \in \{0, \dots, k-2\}$, and so all cases have been accounted for. 
\par 
A symmetric proof holds when concatenation of $x$-terms occurs in $wu$, which completes the proof. }
\end{proof}

\begin{cor}\label{cor:CDodd}
    Modulo cyclic permutations, the set $\overline{\mathcal{C}}$ gives a set of minimal length conjugacy representatives. Modulo split cyclic permutations, the set $\overline{\mathcal{D}}$ gives a set of minimal length conjugacy representatives. 
\end{cor}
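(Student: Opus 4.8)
The plan is to follow the same three-step scheme used for the corollary after \cref{prop:conj geo B}. One must show: (i) every word in $\overline{\mathcal{C}}$ and every word in $\overline{\mathcal{D}}$ is a conjugacy geodesic; (ii) two words of $\overline{\mathcal{C}}$ (resp.\ of $\overline{\mathcal{D}}$) are conjugate exactly when they agree up to a cyclic permutation (resp.\ a split cyclic permutation); and (iii) every element of $G(4k)$ is conjugate to some word lying in $\overline{\mathcal{C}}\cup\overline{\mathcal{D}}$. Part (ii) is already in hand: it is \cref{prop:conj geos cyc perm} for $\overline{\mathcal{C}}$ and \cref{prop:sets B} for $\overline{\mathcal{D}}$. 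Moreover, since the geodesic Types of \cref{prop:geos 2k} are mutually exclusive and the conjugation moves appearing in those two proofs preserve the Type (and, within Type 2, the tuple $X_{\tau_1,\tau_2}$), the families $\overline{\mathcal{C}}$ and $\overline{\mathcal{D}}$ meet disjoint sets of conjugacy classes, so together the two statements parametrise all conjugacy classes up to (split) cyclic permutation.

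For (i), a word $w\in\overline{\mathcal{C}}\cup\overline{\mathcal{D}}$ is a geodesic by \cref{prop:geos 2k} and \cref{rmk:alternative}, so $|w|$ equals the word length of the element it represents, and it is cyclically reduced because its non-central part begins and ends with letters from opposite free-product factors. The computations in the proofs of \cref{prop:conj geos cyc perm} and \cref{prop:sets B} show that conjugating such a $w$ by an arbitrary element of $G(4k)$ --- written as an alternating product of powers of $x$ and $y$ times a central power --- returns, after cyclic reduction together with applications of RR1--RR3, a word of the same Type and the same length: a cyclic permutation of $w$ if $w\in\overline{\mathcal{C}}$, or a split cyclic permutation of $w$ if $w\in\overline{\mathcal{D}}$. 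Hence no conjugate of $w$ is strictly shorter, so $|w|=|[w]|$ and $w$ is a conjugacy geodesic.

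For (iii), start from $g\in G(4k)$ and a geodesic representative $w$ as classified in \cref{prop:geos 2k}. If $w$ has Type 1 or Type 3, a cyclic permutation of $w$ (collecting central powers to the right by RR1) begins and ends with opposite factors and hence lies in $\overline{\mathcal{C}_1}$ or $\overline{\mathcal{C}_3}$. If $w$ has Type 2, rewrite it in the form of \cref{rmk:alternative}; when it is Type $2a$ a cyclic permutation of its $A_s$-blocks places it in $\overline{\mathcal{C}_2}$, and when it is Type $2b$ one cyclically permutes the $A_s$-blocks so that $A_1$ and $A_{\tau+1}$ are not both empty and the word begins and ends with opposite factors, applying RR3 to restore the normal form whenever a shift juxtaposes an $x^{j}$ with an $x^{-(2k-j)}$ (and applying RR2 where possible, which strictly shortens $w$ and hence moves us to a shorter conjugacy class, where the argument restarts). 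Since each step either ends in $\overline{\mathcal{D}}$ or reduces the length, this halts with a representative in $\overline{\mathcal{D}}$.

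The main obstacle is step (iii) in the Type $2b$ case: one has to check that, uniformly over the three regimes $j=k$, $j=2k-1$ and $k<j<2k-1$, and while respecting the constraint of \cref{def:even_split_reps}(iii) that $A_1$ and $A_{\tau+1}$ are not simultaneously empty, the cyclic rearrangement of the $A_s$-blocks together with RR1--RR3 can always be carried out so as to land in a word genuinely of the prescribed form --- that is, that the normalisation into $\overline{\mathcal{D}}$ is at once possible, well-defined, and compatible with the length drop caused by RR2. Steps (i) and (ii) are then essentially bookkeeping on top of the propositions already proved.
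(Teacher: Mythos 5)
Your proposal is correct and follows essentially the same route as the paper: part (ii) is exactly \cref{prop:conj geos cyc perm} and \cref{prop:sets B}, part (i) is the observation that those propositions show every conjugate of a word in $\overline{\mathcal{C}}$ (resp.\ $\overline{\mathcal{D}}$) is a length-preserving (split) cyclic permutation, and part (iii) is the paper's closing remark that any geodesic from \cref{prop:geos 2k} can be cyclically permuted into $\overline{\mathcal{C}}$ or $\overline{\mathcal{D}}$ using RR1 and RR3. You merely spell out the Type $2b$ normalisation in more detail than the paper, which disposes of it in one sentence.
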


\begin{proof}
    We have shown that the elements of $\overline{\mathcal{C}}$ are conjugacy geodesics, unique up to cyclic permutation by \cref{prop:conj geos cyc perm}. Similarly the elements of $\overline{\mathcal{D}}$ are conjugacy geodesics, unique up to a split cyclic permutation by \cref{prop:sets B}. It remains to show that any element in $G(m)$ is conjugate to an element represented by a word in $\overline{\mathcal{C}}$ or $\overline{\mathcal{D}}$. This follows immediately since any geodesic of the form in \cref{prop:geos 2k} can be cyclically permuted to an element of either $\overline{\mathcal{C}}$ or $\overline{\mathcal{D}}$, using the rewrite rules RR1 and RR3 if necessary. 
\end{proof}
\subsubsection{The even dihedral group $G(4)$: $p = 2k = 2$}\label{subsec: p=2}
The classification of geodesics follows similarly to \cref{prop:geos 2k}, with some small differences when $k=1$. This can be derived from \cite[Section 3]{EdjJohn92} and we collect the types in Table \ref{table:BS(2,2)}. Type (3) geodesics from \cref{prop:geos 2k} reduce to words of the form $y^{\beta}$ where $\beta \in \Z_{\neq 0}$. Also, similar to \cref{rmk:alternative}, any Type 2 geodesic can be written in the form
\[  w = A_{1}xA_{2}x\dots A_{\tau_{1}}xA_{\tau_{1}+1}x^{-1}A_{\tau_{1} + 2}x^{-1}\dots x^{-1}A_{\tau_{1} + \tau_{2} + 1},
\]
where each $A_{s}=y^t$ ($t \in \Z_{\neq 0}, 1 \leq s \leq \tau_{1}+\tau_{2}+1$). Note that if $\tau_{1} = 0$ or $\tau_{2} = 0$, then $w$ is in fact a Type 1 geodesic. This implies that the mutually exclusive geodesic normal forms are:
\begin{enumerate}
    \item[(i)] Type 1 geodesics, which are unique,
    \item[(ii)] Type 2 geodesics such that $\tau_{1}, \tau_{2} \geq 1$, which are non-unique, and
    \item [(iii)] Type 3 geodesics, which are unique.
\end{enumerate}
As before, let $\overline{\mathcal{C}}$ consist of Type 1 geodesics which start and end with opposite letters, and Type 3 geodesics. Let $\overline{\mathcal{D}}$ consist of Type 2 geodesics such that $\tau_{1}, \tau_{2} \geq 1$, which start and end with opposite letters. Analogous results to \cref{prop:conj geos cyc perm} and 
\cref{prop:sets B} follow using the same proof method. 

\begin{exmp}
    We consider examples of minimal length conjugacy representatives for the group
    $G(4) \cong \mathrm{BS}(2,2)$. First consider the word $u_{1} = xy^{5}xy^{-3}xy^{2}\Delta^{2} \in \overline{\mathcal{C}}$, which is of Type (1). Here $u_{1}$ is conjugate to the word $v_{1} = y^{-3}xy^{2}xy^{5}x\Delta^{2} \in \overline{\mathcal{C}}$, which is a cyclic permutation of $u_{1}$ (up to the Garside element), by the following relations:
    \[ w^{-1}u_{1}w = \left(xy^{5}x\right)^{-1}\cdot xy^{5}xy^{-3}xy^{2}\Delta^{2}\cdot xy^{5}x =_{G} y^{-3}xy^{2}xy^{5}x\Delta^{2} = v_{1}.
    \]
    Now consider the word $u_{2} = yxy^{-2}xy^{11}x^{-1} \in \overline{\mathcal{D}}$, which is of Type (2). Here $u_{2}$ is conjugate to $v_{2} = y^{-2}xy^{11}xyx^{-1} \in \overline{\mathcal{D}}$, which is a split cyclic permutation of $u_{2}$, by the following relations:
    \[ w^{-1}u_{2}w = (yx)^{-1}\cdot yxy^{-2}xy^{11}x^{-1} \cdot (yx) =_{G} y^{-2}xy^{11}x^{-1}yx =_{G} y^{-2}xy^{11}xyx^{-1} = v_{2}.
    \]
\end{exmp}

\subsection{The even dihedral group $G(4k+2)$: Case $p = 2k+1$}\label{sec:G(4k+2)}
From \cite{EdjJohn92}, the geodesic normal form is almost identical to the $p=2k$ case, with some minor changes in the parameters. The same proofs will follow through from \cref{sec:2k}, and so we give a summary of the key results only (see also Table \ref{tab:p=2k+1}). 
\begin{prop}\label{prop:geos 2k+1}
    Every element in $G(4k+2)$ has a geodesic representative of one of:
    \begin{enumerate}
        \item[Type 1.] $(x^{\mu_{1}}y^{\ep_{1}}\dots x^{\mu_{n}}y^{\ep_{n}}\Delta^{c})^{\pm 1}$ where $c, n \geq 1$, $\mu_{1} \in [0, 2k] \cap \Z$,  $\mu_{i} \in [1, 2k] \cap \Z$ $(2 \leq i \leq n)$, $\ep_{i} \in \Z$ $(1 \leq i \leq n)$, $\ep_{i} \neq 0$ $(1 \leq i \leq n-1)$.  
        \item[Type 2.] $[(x^{\mu_{1j}}y^{\ep_{1j}}\dots y^{\ep_{n_{j}j}})x^{j}(y^{\delta_{1j}}x^{\gamma_{1j}}\dots x^{\gamma_{m_{j}j}})]^{\pm 1}$ for some $k+1 \leq j \leq 2k$, where $n_{j}, m_{j} \geq 0$, $\ep_{sj}, \delta_{tj} \in \Z_{\neq 0}$ $(1 \leq s \leq n_{j}, 1 \leq t \leq m_{j})$.
        Three separate cases for different values of $j$:
        \begin{enumerate}
            \item If $j = k+1$, $\mu_{s(k+1)} \in [-(k-1), k+1] \cap \Z_{\neq 0}, \; \gamma_{t(k+1)} \in [-k, k] \cap \Z_{\neq 0}$ $(1 \leq s \leq n_{j}, 1 \leq t \leq m_{j})$.
            \item If $j = 2k$, $\mu_{s(2k)} \in [1, 2k] \cap \Z, \gamma_{t(2k)} \in [-1, 2k-1] \cap \Z_{\neq 0}$ $(1 \leq s \leq n_{j}, 1 \leq t \leq m_{j})$.
            \item If $k < j < 2k$, $\mu_{sj} \in [-(2k-j), j] \cap \Z_{\neq 0}, \; \gamma_{tj} \in [-(2k-j+1), j-1] \cap \Z_{\neq 0}$ $(1 \leq s \leq n_{j}, 1 \leq t \leq m_{j})$.
        \end{enumerate}
        In all cases, $x^{\mu_{1j}}, x^{\gamma_{m_{j}j}}$ may also equal 0.
        \item[Type 3.] $x^{\alpha}y^{\ep_{1}}x^{\nu_{1}}\dots x^{\nu_{n}}$ where $n \geq 0$, $\ep_{i} \in \Z_{\neq 0}$ $(1 \leq i \leq n)$, $\alpha, \nu_{n} \in [-k, k] \cap \Z$, $\nu_{i} \in [-k, k] \cap \Z_{\neq 0}$ $(1 \leq i \leq n-1)$.
    \end{enumerate}
     Moreover, Types 1,2 and 3 are mutually exclusive.
\end{prop}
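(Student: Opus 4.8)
The plan is to follow the proof of \cref{prop:geos 2k} almost verbatim, replacing the central exponent $2k$ by $p = 2k+1$ throughout; the underlying classification is due to Edjvet and Johnson \cite{EdjJohn92}, so the point is really to repackage their normal forms in the syllable shape above. First I would record the $p = 2k+1$ analogues of the three rewrite rules: RR1 collects powers of $\Delta = x^{2k+1}$ to the right; RR2 replaces $u x^{\delta} v x^{\sigma} w$ (resp.\ $u x^{\sigma} v x^{\delta} w$) by $u x^{\delta + (2k+1)} v x^{\sigma - (2k+1)} w$ (resp.\ its mirror) whenever $\sigma > 0 > \delta$ and $\sigma + |\delta| > 2k+1$, strictly shortening the word; and RR3 swaps $x^{\delta}$ and $x^{\sigma}$ when $\sigma + |\delta| = 2k+1$, preserving length. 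As before, only RR2 changes length, so no application of RR2 is possible to a geodesic word.

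Next I would take an arbitrary $g \in G(4k+2)$, fix a geodesic word for it, apply RR1 to collect the central part as $W\Delta^{c}$, and freely reduce. The key structural input --- exactly as in the $p = 2k$ argument --- is that geodesicity forbids RR2, so any positive and negative powers of $x$ that can be brought together through central material have absolute values summing to at most $2k+1$; in particular every $x$-syllable of $W$ lies in $[-(p-1), p-1] = [-2k, 2k]$. Splitting into the cases $c \neq 0$, then $c = 0$ with some $x$-syllable of absolute value $\geq k+1$, then $c = 0$ with all $x$-syllables of absolute value $\leq k$, one reads off Types 1, 2 and 3 respectively. In Type 2, RR3 is used to slide the (essentially unique) maximal positive syllable $x^{j}$, $k+1 \leq j \leq 2k$, past all the syllables equal to its negative complement $x^{-(2k+1-j)}$, yielding the displayed $x^{j}$-then-$x^{-(2k+1-j)}$ factorisation in the spirit of \cref{rmk:alternative}, with the three parameter regimes according to whether $j = k+1$, $j = 2k$, or $k+1 < j < 2k$.

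It then remains to check mutual exclusivity and geodesicity. For exclusivity: a Type 1 word has nonzero $\Delta$-exponent, a Type 2 word has an $x$-syllable of absolute value $\geq k+1$ but zero $\Delta$-exponent, and a Type 3 word has all $x$-syllables in $[-k,k]$ and zero $\Delta$-exponent, so the three classes are disjoint. For geodesicity: no word in the three forms admits free reduction or an application of RR2 (one checks that no positive and negative $x$-powers reachable through central material sum in absolute value beyond $2k+1$), and, as established in \cref{appen2: even} for $G(4k)$, in $\mathrm{BS}(p,p)$ free reduction and RR2 are the only ways a word can fail to be geodesic; hence each form is geodesic.

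The only real work is bookkeeping, and that is where I expect the friction: verifying that the $\mu_{sj}$- and $\gamma_{tj}$-ranges in Type 2 are exactly those forced by RR3 across the three regimes, in particular in the degenerate boundary cases $j = k+1$ (where the negative complement $x^{-(2k+1-j)} = x^{-k}$ is as large as the admissible interior powers) and $j = 2k$ (where the complement is merely $x^{-1}$), and confirming the deliberate asymmetry between the pre-pivot ranges, which may attain the value $j$, and the post-pivot ranges, which stop at $j-1$. Since every ingredient is the literal $p = 2k+1$ counterpart of one already used for $G(4k)$, no genuinely new idea is needed beyond this parameter chase.
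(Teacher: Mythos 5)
Your overall strategy --- transporting the $p=2k$ argument with the central element $\Delta = x^{2k+1}$ and the rewrite rules RR1--RR3 adjusted accordingly --- is exactly what the paper intends: it says only that ``the same proofs will follow through'' from the $G(4k)$ case, and your treatment of existence and geodesicity matches that template (with geodesicity ultimately resting on the classification in \cite{EdjJohn92}, just as in the paper).

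The gap is in mutual exclusivity. You argue only that the three classes of \emph{words} are syntactically disjoint (nonzero versus zero $\Delta$-exponent; presence or absence of an $x$-syllable of absolute value $\geq k+1$). But the proposition, as proved for $G(4k)$ in \cref{appen2: even} and as used later when conjugacy classes are counted type by type, asserts something stronger: no \emph{group element} admits geodesic representatives of two different types. The $\Delta$-exponent and the syllable data of a word are not a priori invariants of the element it represents (for instance $x^{k+1}\cdot x^{k+1} =_G x\Delta$ in $G(4k+2)$, so a word ``without central part'' can equal an element whose normal form carries one), so syntactic disjointness of the displayed forms does not by itself rule out $g =_G h$ with $g$ of Type 1 and $h$ of Type 2. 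The paper's proof handles this by viewing $\mathrm{BS}(p,p)$ as an HNN extension and applying Britton's lemma to $gh^{-1}$: the forced pinches determine the $x$-syllables modulo $p$, and the stated exponent ranges then force either literal equality of the two words or a contradiction, with torsion-freeness killing any leftover power of $\Delta$. You would need to reproduce that six-case HNN-reduction argument with $2k$ replaced by $2k+1$; without it the ``mutually exclusive'' clause remains unproved.
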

\begin{defn}\label{rmk:alternative1}
    Let $\tau = \tau_{1}+\tau_{2}$. Any Type 2 geodesic from \cref{prop:geos 2k+1} can be written in the form $w$ as follows, for all $k \leq j \leq 2k-1$.
    \begin{equation}\label{eq:any j2}
    w = A_{1}x^{j}A_{2}x^{j}\dots A_{\tau_{1}}x^{j}A_{\tau_{1}+1}x^{-(2k-j+1)}A_{\tau_{1}+2}x^{-(2k-j+1)}\dots x^{-(2k-j+1)}A_{\tau_{1}+\tau_{2}+1},
\end{equation}
    where $A_{s}$ $(1 \leq s \leq \tau+1)$ are reduced words over $x,y$, with any maximal subword of the form $x^{\alpha}$ in $A_{s}$ satisfying $\alpha \in [-(2k-j), j-1] \cap \Z_{\neq 0}$. Here $\tau_{1} \geq 1, \tau_{2} \geq 2$, $A_{1}$ ($A_{\tau+1}$ resp.) is either empty or ends (starts) with  $y^{\beta}$ for some $\beta \in \Z_{\neq 0}$, and $A_{s}$ blocks start and end with $y^{\beta}$ for some $\beta \in \Z_{\neq 0}$ $(2 \leq s \leq \tau)$.

    In this form, we split Type 2 geodesics into two disjoint sets, denoted Type $2a \; \sqcup$ Type $2b$, where Type $2a$ is the set of all Type 2 geodesics where $\tau_{2} = 0$, i.e. of the form 
     \[ w = A_{1}x^{j}A_{2}x^{j}\dots A_{\tau_{1}}x^{j}A_{\tau_{1}+1},
     \]
    and Type $2b$ is the set of Type 2 geodesics with $\tau_{2} \geq 1$, i.e. of the form
    \[ w =_{G} A_{1}x^{j}A_{2}x^{j}\dots A_{\tau_{1}}x^{j}A_{\tau_{1}+1}x^{-(2k-j+1)}A_{\tau_{1} + 2}x^{-(2k-j+1)}\dots x^{-(2k-j+1)}A_{\tau_{1}+\tau_{2}+1}.
    \] 
\comm{
    $(a)$ If $j = k+1$, then
    \begin{equation}\label{eq:j=k}
    w = (A_{1}x^{k+1}A_{2}x^{k+1}\dots A_{n}x^{k+1})(B_{1}x^{-k}B_{2}x^{-k}\dots x^{-k}B_{m}),
\end{equation}
where $A_{s}, B_{t}, (1 \leq s \leq n, 1 \leq t \leq m),$ are reduced words over $x,y$, with any maximal subword of the form $x^{\alpha}$ in $A_{s}, B_{t}$ satisfying $\alpha \in \{\pm 1, \dots, \pm (k-1) \} \; (1 \leq s \leq n, 1 \leq t \leq m)$.

$(b)$ If $j=2k$ then
\begin{equation}
    w = (A_{1}x^{2k}A_{2}x^{2k}\dots A_{n}x^{2k})(B_{1}x^{-1}B_{2}x^{-1}\dots x^{-1}B_{m}),
\end{equation}
where $A_{s}, B_{t}, (1 \leq s \leq n, 1 \leq t \leq m),$ are reduced words over $x,y$, with any maximal subword of the form $x^{\alpha}$ in $A_{s}, B_{t}$ satisfying $\alpha \in \{1, \dots, 2k-1\} \; (1 \leq s \leq n, 1 \leq t \leq m)$.

$(c)$ If $k+1 < j < 2k$ then
\begin{equation}\label{eq:any j2}
    w = (A_{1}x^{j}A_{2}x^{j}\dots A_{n}x^{j})(B_{1}x^{-(2k-j-1)}B_{2}x^{-(2k-j-1)}\dots x^{-(2k-j-1)}B_{m}),
\end{equation}
where $A_{s}, B_{t}, (1 \leq s \leq n, 1 \leq t \leq m),$ are reduced words over $x,y$, with any maximal subword of the form $x^{\alpha}$ in $A_{s}, B_{t}$ satisfying $\alpha \in \{\pm 1, \dots \pm 2k-j, 2k-j+1, \dots, j-1\} \; (1 \leq s \leq n, 1 \leq t \leq m)$.}
\end{defn}
Let $\mathcal{C}_{1}$ be the set of all Type 1 geodesics, $\mathcal{C}_{2}$ be the set of all Type $2a$ geodesics, and $\mathcal{C}_{3}$ be the set of all Type 3 geodesics, and define $\mathcal{C}, \mathcal{D}$ as 
\[ \mathcal{C} := \mathcal{C}_{1} \sqcup \mathcal{C}_{2} \sqcup \mathcal{C}_{3}
\]
while $\mathcal{D}$ is the set of all remaining geodesics, i.e. all Type $2b$ geodesics.
\begin{prop}
    The set $\mathcal{C}$ consists of unique geodesics, and $\mathcal{D}$ of non-unique geodesics. 
\end{prop}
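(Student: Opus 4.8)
The plan is to run the proof of \cref{prop:even unique non unique} essentially verbatim, changing only the arithmetic. Here the central element is $\Delta = x^{p}$ with $p = 2k+1$, and the rules RR1--RR3 of \cref{sec:2k} are replaced by their analogues with $2k$ replaced by $p$: RR1 collects the central power $x^{p}$ to the right; RR2 (length-decreasing) and RR3 (length-preserving) are triggered by a pair $x^{\sigma}, x^{\delta}$ with $\sigma > 0 > \delta$ according to whether $\sigma + |\delta| > p$ or $\sigma + |\delta| = p$, respectively. Every element of $G(4k+2)$ is carried by these rules together with free reduction into one of the three mutually exclusive forms of \cref{prop:geos 2k+1}.

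First I would show that $\mathcal{D}$ consists of non-unique geodesics. Take a Type $2b$ word $w$ written as in \cref{rmk:alternative1}, so $w$ contains at least one block $x^{j}$ and at least one block $x^{-(2k-j+1)}$, with $\tau_{1}, \tau_{2} \ge 1$. The key identity is $j + (2k-j+1) = 2k+1 = p$, so RR3 applies to the consecutive pair $x^{j}A_{\tau_{1}+1}x^{-(2k-j+1)}$ sitting at the junction of the positive and negative blocks, swapping the two powers of $x$. I would then check that the result $w'$ is a genuinely different freely reduced word of the same length: the block $A_{\tau_{1}+1}$ is flanked by two $x$-blocks (as $\tau_{1}, \tau_{2} \ge 1$), hence begins and ends with a nonzero power of $y$, so no new free reduction is created; and $j \ne 2k-j+1$, since the two sides sum to the odd number $p$, so $w'$ is a distinct geodesic word. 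Hence $w$ is not uniquely geodesic. This part is short and practically identical to the $p = 2k$ case.

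Next I would show that $\mathcal{C} = \mathcal{C}_{1} \sqcup \mathcal{C}_{2} \sqcup \mathcal{C}_{3}$ consists of unique geodesics. For $w \in \mathcal{C}$ representing $g$, \cref{prop:geos 2k+1} shows that every geodesic for $g$ lies in one of the three types, and the type is an invariant of $g$, so every geodesic for $g$ has the same type as $w$. Within each type no nontrivial length-preserving rewrite is available to $w$: in Type 1 all powers of $x$ in the body lie in $[0, 2k]$, hence are nonnegative, and $\Delta^{c}$ with $c \ge 1$ is already collected on the right, so no pair $\sigma > 0 > \delta$ exists and neither RR2 nor RR3 fires; in Type $2a$ the only repeated large power is $x^{j}$, which has no negative partner in $w$, so again nothing fires; in Type 3 every power of $x$ lies in $[-k, k]$, so any pair $\sigma > 0 > \delta$ satisfies $\sigma + |\delta| \le 2k < p$, and neither rule fires. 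Together with the fact that distinct reduced words of $\mathcal{C}$ represent distinct elements, this forces the normal form uniquely on $g$, so $w$ is the unique geodesic for $g$.

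The main obstacle is exactly that last point: injectivity of the assignment $g \mapsto w \in \mathcal{C}$, i.e. that two distinct words of $\mathcal{C}$ cannot represent the same element. This is what the appendix proof of \cref{prop:even unique non unique} extracts from the classification of geodesics in $G(4k)$, and I expect it to transfer once the parameter ranges of \cref{prop:geos 2k+1} and \cref{rmk:alternative1} are substituted for those of \cref{prop:geos 2k} and \cref{rmk:alternative}. The one place requiring care is confirming that each step of that appendix argument used only the inequality $\sigma + |\delta| \ge p$ (with $p$ the central exponent), and never the specific value $p = 2k$, so that it remains valid for $p = 2k+1$.
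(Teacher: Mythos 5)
Your proposal is correct and takes essentially the same route as the paper, which gives no separate proof here: Section 5.3 (the $p=2k+1$ case) explicitly states that the proofs carry over from the $p=2k$ case, whose appendix argument for Proposition 4.4 is exactly what you adapt. Your arithmetic is the right substitution --- for Types 1, 2a and 3 every pair $\sigma>0>\delta$ of $x$-exponents satisfies $\sigma+|\delta|\le 2k < p=2k+1$ so the length-preserving rule never fires, while for Type 2b the identity $j+(2k-j+1)=p$ makes it fire and produces a distinct freely reduced geodesic.
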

Define the following subset of $\mathcal{C}$:
    \[
    \overline{\mathcal{C}} = 
    \begin{cases}
        w = (x^{\mu_{1j}}y^{\ep_{1}}\dots x^{\mu_{n}}y^{\ep_{n}}\Delta^{c})^{\pm 1} & w \in \mathcal{C}_{1}, \; \mu_{1j} = 0 \Leftrightarrow \ep_{n} = 0, \\
        w = x^{\mu_{1j}}y^{\ep_{1j}}\dots y^{\ep_{n_{j}j}} & w \in \mathcal{C}_{2}, \; \mu_{1j} = 0 \Leftrightarrow \ep_{n_{j}j} = 0,\\
        w = x^{\mu_{1}}y^{\ep_{1}}\dots y^{\ep_{n}} & w \in \mathcal{C}_{3}, \; \mu_{1} = 0 \Leftrightarrow \ep_{n} = 0.
    \end{cases}
    \]
Define $\overline{\mathcal{D}}$ to be all words in $\mathcal{D}$ of the form in \cref{rmk:alternative1}, such that words start and end with opposite factors.
\begin{cor}\label{cor:CDeven}
    Modulo cyclic permutations, the set $\overline{\mathcal{C}}$ gives a set of minimal length conjugacy representatives. Modulo split cyclic permutations, the set $\overline{\mathcal{D}}$ gives a set of minimal length conjugacy representatives. 
\end{cor}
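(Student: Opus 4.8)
The plan is to follow the proof of \cref{cor:CDodd} almost verbatim. Recall that \cref{prop:geos 2k+1} differs from \cref{prop:geos 2k} only by replacing the central power $\Delta = x^{2k}$ with $\Delta = x^{2k+1}$ and the maximal negative $x$-exponent $-(2k-j)$ attached to the $x^{j}$ blocks with $-(2k-j+1)$; correspondingly the rewrite rules RR1, RR2, RR3 hold with $2k$ replaced by $2k+1$, and \cref{rmk:alternative1} plays the role that \cref{rmk:alternative} played in the $p=2k$ case. So the work reduces to re-running the three conjugacy arguments of \cref{sec:2k} in these shifted parameter ranges.

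\textbf{Step 1 (conjugacy criteria).} First I would establish the analogues of \cref{prop:conj geos cyc perm} and \cref{prop:sets B} for $G(4k+2)$: for $w,v \in \overline{\mathcal{C}}$ of the shape appearing in the definition of $\overline{\mathcal{C}}$, one has $w \sim v$ if and only if they have equal Garside powers, equal syllable length, and their $x$--$y$ parts are cyclic permutations of one another; and for $w,v \in \overline{\mathcal{D}}$ based on tuples $X_{\tau_{1},\tau_{2}}$ and $X_{\sigma_{1},\sigma_{2}}$, one has $w \sim v$ if and only if $X_{\tau_{1},\tau_{2}} = X_{\sigma_{1},\sigma_{2}}$ and $w$, $v$ coincide up to a split cyclic permutation. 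Each is proved by the case analysis used in \cref{prop:prod equal}, \cref{prop:conj geos cyc perm} and \cref{prop:sets B}: writing a conjugator as $u = x^{a_{1}}y^{b_{1}}\cdots x^{a_{\tau}}y^{b_{\tau}}\Delta^{c}$, one tracks the concatenations occurring in $u^{-1}w$ and in $wu$. The only cases not reducing immediately to a (split) cyclic permutation are those in which two adjacent $x$-exponents sum to something of absolute value at least $2k+1$; there one moves the resulting $x^{\pm(2k+1)}$ into $\Delta$ and then pulls an $x^{\mp(2k+1)}$ back out of the neighbouring syllable of $u$, restoring the Garside power, so that the net effect on $w$ is a cyclic permutation; in the $\overline{\mathcal{D}}$ case this last step is an application of RR3, i.e.\ a split cyclic permutation.

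\textbf{Step 2 (these represent all conjugacy classes).} Step 1 says exactly that the elements of $\overline{\mathcal{C}}$ are conjugacy geodesics, pairwise non-conjugate modulo cyclic permutation, and that the elements of $\overline{\mathcal{D}}$ are conjugacy geodesics, pairwise non-conjugate modulo split cyclic permutation. It then remains to check that every $g \in G(4k+2)$ is conjugate to a word in $\overline{\mathcal{C}} \sqcup \overline{\mathcal{D}}$. Take a geodesic representative of $g$ in one of the three forms of \cref{prop:geos 2k+1}, cyclically permute it so that it starts and ends with syllables from opposite free-product factors, and apply RR1 (to collect the central part on the right) and RR3 where needed: a Type 1 geodesic then lies in $\overline{\mathcal{C}_{1}}$, a Type $2a$ geodesic in $\overline{\mathcal{C}_{2}}$, a Type 3 geodesic in $\overline{\mathcal{C}_{3}}$, and a Type $2b$ geodesic in $\overline{\mathcal{D}}$ (written as in \cref{rmk:alternative1}).

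\textbf{Main obstacle.} The bookkeeping in Step 1 is the only real work: although structurally identical to the $p = 2k$ case, one must recheck every boundary case in the shifted parameter ranges of \cref{prop:geos 2k+1} --- in particular the $j = k+1$, $j = 2k$ and $k+1 < j < 2k$ subcases of Type 2, and the interaction of the $y$-exponent bounds with the new $\Delta = x^{2k+1}$ --- to confirm that the ``move into $\Delta$, pull back out'' manoeuvre always returns a word of the same geodesic type, and that for $\overline{\mathcal{D}}$ the separating tuple $X_{\tau_{1},\tau_{2}}$ is genuinely a conjugacy invariant. Once this is in place, Step 2 is routine, and the corollary follows.
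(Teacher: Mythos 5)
Your proposal is correct and follows essentially the same route as the paper, which for this corollary simply asserts that the proofs of \cref{prop:conj geos cyc perm}, \cref{prop:sets B} and \cref{cor:CDodd} carry over to the shifted parameters of \cref{prop:geos 2k+1} and \cref{rmk:alternative1}; your Step 1 and Step 2 are a faithful expansion of exactly that argument, including the ``move into $\Delta$, pull back out'' manoeuvre and the final reduction of an arbitrary geodesic to $\overline{\mathcal{C}} \sqcup \overline{\mathcal{D}}$ via cyclic permutation and the rewrite rules.
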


\section{Conjugacy growth}\label{growth}
In this section we show that the conjugacy growth series of dihedral Artin groups is transcendental with respect to the free product generating set $X = \{x,y\}$. This follows from determining the asymptotics of conjugacy growth $c(n)$ using the conjugacy geodesic representatives found in \cref{sec:odd conj geos} and \ref{even conj geos}, which are collected in Tables 1--4. 
  The general strategy for computing the asymptotics for $c(n)$ is to use the fact that 
  finding conjugacy class representatives amounts, in most cases (but not all!), to picking words up to cyclic permutations.
\begin{defn}\label{def:eq}
    A sequence $a_n$, $n \geq 1$, has asymptotics $f(n)$, denoted as $a_n \sim f(n)$, if there exist two constants $c_1, c_2$ such that $c_1 f(n) \leq a_n \leq c_2 f(n)$ for all $n \geq 1$.
\end{defn}
\begin{exmp}\label{ex:standard}
In $G(3)$ (see also Example \ref{example:G(3)}) the six words $xyxyxy^{-1}$, $yxyxy^{-1}x$, $xyxy^{-1}xy$, $\dots$, $y^{-1}xyxyx$ are all the conjugacy geodesics in $[xyxyxy^{-1}]$ by Prop. \ref{prop:prod equal}, are of Type $(3^{0+}U)$, and are cyclic permutations of each other. It suffices to pick one of these $6$ words as representative for the conjugacy class $[xyxyxy^{-1}]$ and call this a `standard' case.
\end{exmp}
In the `standard' cases, when the geodesics are unique and conjugacy is essentially cyclic permutation of letters, there are  $\sim c\frac{\alpha^n}{n}$ representatives of length $n$, up to cyclic permutations, where $\alpha^n$ is the asymptotics for the number of words of length $n$ in a set like (\ref{set:alt}), and $c$ is a constant. It will be occasionally convenient to take representatives up to cyclic permutation of `syllables' $x^{\sigma}y^{\delta}$ instead of single letters, with similar arguments but different constants. 

However, for several `non-standard' types of conjugacy geodesics, cyclic permutation representatives do not capture conjugacy correctly, and split cyclic permutation representatives must be considered (recall Corollaries \ref{cor:CDodd}, \ref{cor:CDeven}). This makes the computations significantly harder.  

\begin{exmp}\label{ex:non-standard}
In $G(3)$, taking all the words of length $n$ in a set of the form (\ref{set:alt}) and dividing by $n$ will not give the correct number of conjugacy classes for each type. Take the word $u_{2} = x^{-1}yx^{-1}y^{-1}xy$ of Type $(3^{0*})$; $u_2$ is not a unique geodesic because $x^{-1}y^{-1}x =_{G} xy^{-1}x^{-1}$. So for example $u'_{2} = x^{-1}yxy^{-1}x^{-1}y=_G u_2$ is also a geodesic, and conjugacy geodesic as well.

We could consider the $6$ cyclic permutations of $u_2$ and pick out one representative, but this would be in the same conjugacy class as $u'_2$, which is not a cyclic permutation of $u_2$. This is why we restrict to counting words of Type $(3^{0*})$ only: we order all $x$ and $x^{-1}$ to have the negative powers first, so $(x^{-1}, x^{-1}, x)$, and take representatives of the $y$ terms $(y,y^{-1},y)$, up to cyclic permutations of the latter. This leads to split cyclic permutations of $u_2$ such as $x^{-1}yx^{-1}y^{-1}xy$, $x^{-1}yx^{-1}yxy^{-1}$, $\dots$, which preserve the order $(x^{-1}, x^{-1}, x)$, are all of Type $(3^{0*})$, and in the same conjugacy class. See also Example \ref{ex:split} and Definition \ref{defn: split odds}.
\end{exmp}
In all instances finding conjugacy growth relies on first finding the standard growth of sets of words with a given structure, as follows. Let $x,y$ be two fixed letters, and let $\Sigma$ and $\Lambda$ be two (finite or infinite) subsets of $\mathbb{Z}_{\neq 0}$. Finding the number of elements of length $n$, the growth series, and the growth rate for a set $\mathcal{A}_{\Sigma, \Lambda}$ as in (\ref{set:alt}), follows standard techniques (see \cite[Thm. I.1, p.27]{Flajolet2009} or \cite{EdjJohn92}). Note that computing growth asymptotics of alternating products of $x$ and $y$ does not depend on the nature of prefixes and suffixes, as explained below. Define 
\begin{equation}\label{set:alt}
\mathcal{A}_{\Sigma, \Lambda}:=\{x^{\sigma_1}y^{\delta_1} x^{\sigma_2} y^{\delta_2}\cdots x^{\sigma_r}y^{\delta_r} \mid 1\leq i \leq r, \sigma_i\in \Sigma, \delta_i\in \Lambda\},
\end{equation}
and let $\mathcal{A^e}_{\Sigma, \Lambda} \supset \mathcal{A}_{\Sigma, \Lambda}$ be the set containing $\mathcal{A}_{\Sigma, \Lambda}$, where the words are allowed to start and end with any letters, that is, are not required to start with $x^{\sigma}$ and end with $ y^{\delta}$:
$$\mathcal{A^e}_{\Sigma, \Lambda}:=\{y^{\sigma_0}x^{\sigma_1}y^{\delta_1} x^{\sigma_2} \cdots y^{\delta_r}x^{\sigma_{r+1}} \mid 1\leq i \leq r, \sigma_i\in \Sigma, \delta_i\in \Lambda, \sigma_0 \in \Lambda \cup \{0\}, \sigma_{r+1}\in \Sigma \cup \{0\}  \}.$$
When the sets $\Sigma$ and $\Lambda$ are given explicitly it is often possible to find the growth rate of the set $\mathcal{A}_{\Sigma, \Lambda}$, following \cite[Lemma 2]{EdjJohn92}, and compare this to the growth of $\mathcal{A^{e}}_{\Sigma, \Lambda}$.
\begin{prop}\label{prop:product growth} (see \cite[Lemma 2]{EdjJohn92})
Let $\Sigma(z)$ be the growth series of the set $\{x^{\sigma}\mid \sigma \in \Sigma\}$ and $\Lambda(z)$ be the growth series of the set $\{y^{\delta}\mid \delta \in \Lambda\}$. The growth series of the set $\mathcal{A}_{\Sigma, \Lambda}$ and the growth series of $\mathcal{A^e}_{\Sigma, \Lambda}$ have the same denominator $1-\Sigma(z)\Lambda(z)$, and the growth rates of the two sets are the same. 
\end{prop}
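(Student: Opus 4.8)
The plan is to compute the two length-counting generating functions explicitly by the symbolic method, read off the common denominator, and then deduce the growth-rate equality by comparing dominant singularities. First I would note that the decomposition defining $\mathcal{A}_{\Sigma,\Lambda}$ is unambiguous: writing an element as $x^{\sigma_1}y^{\delta_1}\cdots x^{\sigma_r}y^{\delta_r}$ with all $\sigma_i,\delta_i\neq 0$ produces a freely reduced word over $\{x^{\pm1},y^{\pm1}\}$, and its maximal-block decomposition recovers the tuple $(\sigma_1,\delta_1,\dots,\sigma_r,\delta_r)$ uniquely. Hence $\mathcal{A}_{\Sigma,\Lambda}$ is in length-preserving bijection with the set of finite sequences of pairs from $\{x^{\sigma}\mid\sigma\in\Sigma\}\times\{y^{\delta}\mid\delta\in\Lambda\}$, so by the sequence construction of the symbolic method (\cite[Thm. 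I.1]{Flajolet2009}) the length-counting generating function is
$$A(z)=\sum_{r\geq 0}\bigl(\Sigma(z)\Lambda(z)\bigr)^{r}=\frac{1}{1-\Sigma(z)\Lambda(z)}$$
(if one insists $r\geq 1$ the numerator becomes $\Sigma(z)\Lambda(z)$, which changes nothing below). Likewise every word of $\mathcal{A}^{e}_{\Sigma,\Lambda}$ decomposes uniquely as an optional initial $y$-block (counting series $1+\Lambda(z)$), followed by an element of $\mathcal{A}_{\Sigma,\Lambda}$, followed by an optional terminal $x$-block (counting series $1+\Sigma(z)$), so
$$A^{e}(z)=\bigl(1+\Lambda(z)\bigr)A(z)\bigl(1+\Sigma(z)\bigr)=\frac{(1+\Sigma(z))(1+\Lambda(z))}{1-\Sigma(z)\Lambda(z)}.$$
Both series are thus expressed over the common denominator $1-\Sigma(z)\Lambda(z)$, which proves the first assertion.

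For the growth rates, recall that a set of words with counting series $f(z)=\sum_n s_n z^n$ has growth rate $\limsup_n s_n^{1/n}=1/\rho$, where $\rho$ is the radius of convergence of $f$. Since $\Sigma(z),\Lambda(z)$ have non-negative coefficients, so does $A(z)$, and by Pringsheim's theorem its radius of convergence $\rho$ is itself a singularity; as $A(z)=1/(1-\Sigma(z)\Lambda(z))$ with $\Sigma(z)\Lambda(z)<1$ on $[0,\rho)$, this forces $\rho$ to be the smallest positive root of $1-\Sigma(z)\Lambda(z)$, and forces $\Sigma(z),\Lambda(z)$ (hence $1+\Sigma(z)$, $1+\Lambda(z)$) to be analytic on the open disc of radius $\rho$. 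Then $A^{e}(z)$ has no singularity in that disc, while $\rho$ \emph{is} a singularity of $A^{e}(z)$: the numerator value $(1+\Sigma(\rho))(1+\Lambda(\rho))$ is strictly positive (non-negative coefficients, $\rho>0$) and so cannot cancel the zero of the denominator at $\rho$. Therefore $A(z)$ and $A^{e}(z)$ have the same radius of convergence $\rho$, hence $\mathcal{A}_{\Sigma,\Lambda}$ and $\mathcal{A}^{e}_{\Sigma,\Lambda}$ have the same growth rate $1/\rho$; this is consistent with the inclusion $\mathcal{A}^{e}_{\Sigma,\Lambda}\supseteq\mathcal{A}_{\Sigma,\Lambda}$, which already gives one inequality.

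The two generating-function identities are routine once the decompositions are seen to be unambiguous. The one step that needs genuine care is the growth-rate comparison, namely checking that multiplying $A(z)$ by $(1+\Sigma(z))(1+\Lambda(z))$ neither introduces a singularity closer to the origin than $\rho$ nor cancels the dominant pole; this is exactly where non-negativity of the coefficients of $\Sigma$ and $\Lambda$ (via Pringsheim's theorem) is essential. In every application in this paper the sets $\Sigma,\Lambda$ are given explicitly, so $\Sigma(z),\Lambda(z)$ are concrete rational functions and $\rho$ can simply be computed, which makes these analytic points transparent.
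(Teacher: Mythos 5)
Your proposal is correct, and it is exactly the standard symbolic-method argument that the paper itself invokes (via \cite[Thm. I.1]{Flajolet2009} and \cite[Lemma 2]{EdjJohn92}) without writing out a proof: unambiguous block decomposition gives $A(z)=1/(1-\Sigma(z)\Lambda(z))$ and $A^{e}(z)=(1+\Sigma(z))(1+\Lambda(z))A(z)$, whence the common denominator and equal radii of convergence. You also correctly isolate and handle the one genuinely delicate point, namely that the numerator factors neither create a closer singularity nor cancel the dominant one, via non-negativity of coefficients and Pringsheim, so nothing is missing.
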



\subsection{Conjugacy growth in odd dihedral Artin groups}\label{subsec:odd}
Recall that for odd $m \geq 3$ we have $G(m)=\langle x, y \mid x^2=y^m \rangle$, and let $k=\frac{m-1}{2}.$ Any element in $G(m)$ can be represented by a geodesic word $w = x^{a_{1}}y^{b_{1}}\dots y^{b_{\tau}}\Delta^{c}$ with $\Delta=x^2$, and we classify these geodesics into several types, as in Table \ref{table:oddDA}.
We first compute the asymptotics for the numbers of conjugacy classes with unique geodesic representatives, so
of Types (1), (2), ($3^+$), ($3^-$), or ($3^+ \cap 3^-$). These exhibit `standard' behaviour (as in Example \ref{ex:standard}), in that picking out cyclic permutation representatives is sufficient. 

{\small
\begin{table}[!h]
\begin{adjustbox}{width=1\textwidth}
    \begin{tabular}{|c|c|c|c|}
        \hline
        Reference & Type & Conditions on $w = x^{a_{1}}y^{b_{1}}\dots x^{a_{\tau}}y^{b_{\tau}}\Delta^{c}$, where $\tau \in \Z_{>0}$ & Unique/Non-unique \\
        \hline \hline
        \multirow{5}{*}{\cref{prop:geos} }& $(1)$ & $\begin{array}{cc}
        c>0, &\\
        0 \leq a_{i} \leq 1 \; (1 \leq i \leq \tau), & a_{i} \neq 0 \; (2 \leq i \leq \tau),\\
        -(k-1) \leq b_{i} \leq k+1 \; (1 \leq i \leq \tau), & b_{i} \neq 0 \; (1 \leq i \leq \tau - 1).
        \end{array}$ & \multirow{7}{*}{Unique}\\ \cline{2-3}
        & $(2)$ & $\begin{array}{cc}
            c<0, & \\
            -1 \leq a_{i} \leq 0 \; (1 \leq i \leq \tau), & a_{i} \neq 0 \; (2 \leq i \leq \tau), \\
            -(k+1) \leq b_{i} \leq k-1 \; (1 \leq i \leq \tau), & b_{i} \neq 0 \; (1 \leq i \leq \tau - 1).
        \end{array}$ & \\\cline{2-3}
        & $(3^{+})$ & $\begin{array}{cc}
        c=0, & \\
        0 \leq a_{i} \leq 1 \; (1 \leq i \leq \tau), & a_{i} \neq 0 \; (2 \leq i \leq \tau),\\
        -(k-1) \leq b_{i} \leq k+1 \; (1 \leq i \leq \tau), & b_{i} \neq 0 \; (1 \leq i \leq \tau - 1).
    \end{array}$ & \\\cline{2-3}
        & $(3^{-})$ & $\begin{array}{cc}
        c=0, & \\
        -1 \leq a_{i} \leq 0 \; (1 \leq i \leq \tau), & a_{i} \neq 0 \; (2 \leq i \leq \tau), \\
            -(k+1) \leq b_{i} \leq k-1 \; (1 \leq i \leq \tau), & b_{i} \neq 0 \; (1 \leq i \leq \tau - 1).
    \end{array}$ & \\\cline{2-3}
         & $(3^{+}\cap 3^{-})$ & $w = y^{b}$ where $-(k-1) \leq b \leq k-1$.  & \\\cline{1-3}
         $\begin{array}{c}
              \text{\cref{30 cases}} \\
              \text{(\cref{rmk:alternative odd a})} 
         \end{array}$
            & $(3^{0+}U)$ & $\begin{array}{cc}
        c = 0 & \\
        0 \leq a_{i} \leq 1 \; (1 \leq i \leq \tau), & a_{i} \neq 0 \; (2 \leq i \leq \tau),\\ 
        -k \leq b_{i} \leq k \; (1 \leq i \leq \tau), & b_{i} \neq 0 \; (1 \leq i \leq \tau -1), \\ \text{There exists at least one} \; y^{-k} \; \text{term}. & \\
    \end{array}$&  \\\cline{1-3}
    $\begin{array}{c}
              \text{\cref{30 cases}} \\
              \text{(\cref{rmk:alternative odd b})} 
         \end{array}$
         & $(3^{0-}U)$ & $\begin{array}{cc}
        c = 0 & \\
        -1 \leq a_{i} \leq 0 \; (1 \leq i \leq \tau), & a_{i} \neq 0 \; (2 \leq i \leq \tau),\\ 
        -k \leq b_{i} \leq k \; (1 \leq i \leq \tau), & b_{i} \neq 0 \; (1 \leq i \leq \tau -1), \\ \text{There exist at least one} \; y^{k} \; \text{term}. & \\
    \end{array}$ & \\
        \hline
        $\begin{array}{c}
              \text{\cref{30 cases}} \\
              \text{(\cref{rmk:alternative odd a})} 
         \end{array}$
         & $(3^{0+}N)$ & $\begin{array}{cc}
        c = 0 & \\
        0 \leq a_{i} \leq 1 \; (1 \leq i \leq \tau), & a_{i} \neq 0 \; (2 \leq i \leq \tau),\\ 
        -k \leq b_{i} \leq k+1 \; (1 \leq i \leq \tau), & b_{i} \neq 0 \; (1 \leq i \leq \tau -1), \\ 
        \text{There exist both} \; y^{-k} \; \text{and} \; y^{k+1} \; \text{terms}. & \\
    \end{array}$ & \multirow{3}{*}{Non-unique} \\\cline{1-3}
         $\begin{array}{c}
              \text{\cref{30 cases}} \\
              \text{(\cref{rmk:alternative odd b})} 
         \end{array}$
     & $(3^{0-}N)$ & $\begin{array}{cc}
        c = 0 & \\
        -1 \leq a_{i} \leq 0 \; (1 \leq i \leq \tau), & a_{i} \neq 0 \; (2 \leq i \leq \tau),\\ 
        -(k+1) \leq b_{i} \leq k \; (1 \leq i \leq \tau), & b_{i} \neq 0 \; (1 \leq i \leq \tau -1), \\ \text{There exist both} \; y^{k} \; \text{and} \; y^{-(k+1)} \; \text{terms.}  &
    \end{array}$ & \\\cline{1-3}
    $\begin{array}{c}
              \text{\cref{30 cases}} \\
              \text{(\cref{defn:30*})} 
         \end{array}$
     & $(3^{0*})$ & $\begin{array}{cc}
        c = 0 & \\
        -1 \leq a_{i} \leq 1 \; (1 \leq i \leq \tau), & a_{i} \neq 0 \; (2 \leq i \leq \tau), \\ 
        -k \leq b_{i} \leq k \; (1 \leq i \leq \tau), & b_{i} \neq 0 \; (1 \leq i \leq \tau -1), \\
        \text{There exist both} \; x, x^{-1} \;\text{terms.} &
    \end{array}$ &   \\
    \hline
    \end{tabular}
    \end{adjustbox}
    \caption{Geodesics in odd dihedral Artin groups}
    \label{table:oddDA}
\end{table}
}

\begin{prop}\label{prop:oddA}
    The number of all conjugacy classes of length $n \geq 1$ with representatives of Types (1), (2), ($3^+$), ($3^-$), or ($3^+ \cap 3^-$) has asymptotics $\sim \frac{\rho^{-n}}{n}$, where $\rho$ is the real root of minimal absolute value of $p(z)=1-2z^2- \dots - 2z^{k} - z^{k+1} - z^{k+2}.$
\end{prop}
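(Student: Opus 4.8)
The plan is to identify the combinatorial shape of the five relevant (unique) geodesic types, count the length-$n$ words of each type via product sets of the form (\ref{set:alt}) using \cref{prop:product growth}, and then pass from words to conjugacy classes using \cref{prop:prod equal}: on $\overline{\mathcal{A}}$ (\cref{A bar}) conjugacy is exactly cyclic permutation of the syllable sequence together with equality of the (central, hence conjugacy-invariant) Garside power.

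First I would pin down $p(z)$. Up to a possibly empty leading $x^{0}$ and trailing $y^{0}$, a Type $(3^{+})$ geodesic is a word $x y^{b_{1}} x y^{b_{2}}\cdots x y^{b_{\tau}}$ with each $b_{i}\in\{-(k-1),\dots,-1,1,\dots,k+1\}$, so it lies in a set $\mathcal{A}^{e}_{\Sigma,\Lambda}$ with $\Sigma=\{1\}$ and $\Lambda=\{\pm1,\dots,\pm(k-1),k,k+1\}$. By \cref{prop:product growth} its growth rate equals that of $\mathcal{A}_{\Sigma,\Lambda}$, whose growth series has denominator $1-\Sigma(z)\Lambda(z)$ with $\Sigma(z)=z$ and $\Lambda(z)=2z+2z^{2}+\dots+2z^{k-1}+z^{k}+z^{k+1}$, i.e.\ exactly $p(z)=1-\Sigma(z)\Lambda(z)$. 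Type $(3^{-})$ is the mirror image with the same series; $(3^{+}\cap 3^{-})$ contributes only finitely many words; and Types $(1)$, $(2)$ have the same word part as $(3^{+})$, $(3^{-})$ but carry an extra Garside factor $\Delta^{\pm c}$, $c\ge1$, contributing a factor $z^{2}/(1-z^{2})$ to the generating function. Since $p(0)=1>0$ and $p(1)=1-2k<0$ for $k\ge1$, and $\Sigma(z)\Lambda(z)$ is strictly increasing on $(0,\infty)$, $p$ has a unique positive real root $\rho\in(0,1)$, which is simple; as all Taylor coefficients of $1/p$ are non-negative, Pringsheim's theorem identifies $\rho$ as the dominant singularity, and aperiodicity of the exponent set $\{2,3,\dots,k+2\}$ makes it the only singularity on $|z|=\rho$. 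The factor $z^{2}/(1-z^{2})$ has radius of convergence $1>\rho$, hence does not move the dominant singularity. Therefore the number $s(n)$ of length-$n$ geodesics of one of these five (mutually exclusive, by \cref{prop:geos}) types satisfies $s(n)\sim\rho^{-n}$.

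Next I would pass to conjugacy classes. For these types the geodesic is unique per element, and by \cref{prop:prod equal} two words of $\overline{\mathcal{A}}$ are conjugate iff they have the same Garside power and their syllable sequences $x^{a_{1}}y^{b_{1}}\cdots x^{a_{\tau}}y^{b_{\tau}}$ are cyclic permutations of one another. So, at a fixed Garside power, conjugacy classes of length $n$ correspond to necklaces, i.e.\ cyclic-equivalence classes of those syllable words. A word of weight $n$ has between $n/(k+2)$ and $n/2$ syllables, so a primitive one has $\asymp n$ distinct cyclic shifts; a standard Burnside/M\"obius count then gives that the number of necklaces of weight $n$ is $\asymp s(n)/n$, the non-primitive contribution being bounded by $\sum_{d\mid n,\, d>1}O(\rho^{-n/d})=o(\rho^{-n}/n)$ since $\rho<1$. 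Summing over the Garside power $c$ (which merely shifts $n$ by $2c$ and contributes the convergent factor $\sum_{c\ge1}\rho^{2c}$) and over the five types yields the asserted $\sim\rho^{-n}/n$, with the two-sided constants of \cref{def:eq}.

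I expect the main obstacle to be the bookkeeping underlying the necklace count: one must be careful that ``cyclic permutation'' is taken at the level of syllables $x^{a}y^{b}$ rather than single letters (so that the geodesic representatives of a class are counted with the right multiplicity), and one must prove a genuine two-sided bound --- a lower bound that every class of syllable-length $r$ contributes $\asymp r\asymp n$ geodesic words of length $n$, so that dividing by $n$ is legitimate up to constants, and an upper bound controlling periodic words. The interaction of the partial leading/trailing syllables with the set $\overline{\mathcal{A}}$ (cyclic words that become $\overline{\mathcal{A}}$-normal only after a rewrite) is the other delicate point, but \cref{prop:product growth} localises all the analytic content in the single identity $p(z)=1-\Sigma(z)\Lambda(z)$.
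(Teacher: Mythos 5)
Your proposal is correct and follows essentially the same route as the paper: the same syllable decomposition giving the denominator $p(z)=1-\Sigma(z)\Lambda(z)$, the same treatment of the Garside factor as a product with $z^{2}/(1-z^{2})$, and the same passage to conjugacy classes by dividing by the syllable count $r\asymp n$ while discarding the negligible proper powers. The only cosmetic differences are that you invoke Pringsheim plus aperiodicity where the paper argues directly about real roots of $p$, and you phrase the cyclic count as a Burnside/M\"obius necklace argument where the paper gives the explicit two-sided bound (\ref{inequality:odd}).
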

\begin{proof}
  There are only finitely many conjugacy classes with representatives of Type ($3^+ \cap 3^-$) (see Table \ref{table:oddDA}), so these will not play any role when computing the conjugacy growth rates, 
  or when establishing the complexity of the group conjugacy series. 
  
  We first consider Type ($3^+$). Define the set $\mathcal{A}_{(3^+)}:=\{xy^{b_1} x y^{b_1}\cdots xy^{b_{r}} \mid 1 \leq i \leq r, -k+1 \leq b_{i} \leq k+1, b_i\neq 0\}$ consisting of words of Type ($3^+$); these belong to $\overline{\mathcal{A}}$ (recall \cref{A bar}).
Let $\mathcal{S}_{(3^+)}:=\{xy^{b_i} \mid -k+1 \leq b_{i} \leq k+1, b_i\neq 0\}$ be the set of `syllables' that make up the words in $\mathcal{A}_{(3^+)}$. Then $\mathcal{A}_{(3^+)}=\mathcal{S}_{(3^+)}^+$ (the free semigroup on $\mathcal{S}_{(3^+)}$), so the generating function for $\mathcal{A}_{(3^+)}$ is $\mathcal{A}_{(3^+)}(z)= \frac{1}{1- \mathcal{S}_{(3^+)}(z)}$ (see \cite[Thm. I.1, p.27]{Flajolet2009}), where 
\begin{equation}\label{Sformula}
\mathcal{S}_{(3^+)}(z)=2z^2+ \dots + 2z^{k}+ z^{k+1}+ z^{k+2}
\end{equation}
 is the generating function of $\mathcal{S}_{(3^+)}$, so 
\begin{equation}\label{nonZseriesOdd}
\mathcal{A}_{(3^+)}(z)=\frac{1}{1-2z^2- \dots - 2z^{k} - z^{k+1} - z^{k+2}}.
\end{equation}
Its denominator, that is, the polynomial $p(z)=1-2z^2- \dots - 2z^{k} - z^{k+1} - z^{k+2}$, satisfies $p(0)=1$ and $p\left(\frac{1}{\sqrt{2}}\right)<0$, so it has a root $\rho \in \left(0,\frac{1}{\sqrt{2}}\right)$. Moreover, $p'(\alpha)<0$ for $0<\alpha<\frac{1}{\sqrt{2}}$, so $\rho$ is a simple root. Also, $p(z)=(1-2z^2)+(-2z^3-2z^4)+ \dots + (-z^{k} - z^{k+1})+( -z^{k}-z^{k+2})$ if $k$ is odd, and $p(z)=(1-2z^2)+(-2z^3-2z^4)+ \dots + ( -z^{k+1}-z^{k+2})$ if $k$ is even, and since each parenthesis is $>0$, $p$ has no root in $(-\frac{1}{\sqrt{2}},0]$. Thus the growth rate of the set $\mathcal{A}_{(3^+)}$ is $\frac{1}{\rho}>\sqrt{2}$, which implies (see \cite[Section IV.5]{Flajolet2009}) that the number of words of length $n$ in $\mathcal{A}_{(3^+)}$ is asymptotically $c(k) \rho^{-n}$, where $c(k)$ is a constant depending on $k$ (and therefore on $m$). 

In order to find the growth of the conjugacy classes of Type ($3^+$), we count the words in $\mathcal{A}_{(3^+)}$ up to the cyclic permutation of the syllables in $\mathcal{S}_{(3^+)}$.  For each word in $\mathcal{A}_{(3^+)}$ with $r$ syllables there are $r$ possible distinct cyclic permutations unless that word is a non-trivial power. Given that the number of powers is negligible compared to the total number of words (to see this, suppose that an alphabet $X$ consists of $x>1$ letters; while the number of words of length $n$ over $X$ is $x^n$, the number of proper powers of length $n$ is $\sum_{d|n, d\neq n}x^d < \sum_{i=1}^{n/2}x^i <x^{\frac{n}{2}+1}$, and $\frac{x^{\frac{n}{2}+1}}{x^n} \rightarrow 0$ as $n \rightarrow \infty$), for fixed $n$ and $r$ the number of cyclic representatives of words in $\mathcal{A}_{(3^+)}$ is $\sim c(k)\frac{\rho^{-n}}{r}$. In a word of length $n$ and $r$ `syllables' $xy^{b_i}$, since each syllables has length at least $2$ and at most $k+2$, we get  $\frac{n}{k+2} \leq r \leq \frac{n}{2}$,  
and so the number $a_n$ of cyclic representatives satisfies 
\begin{equation}\label{inequality:odd}
c(k)\frac{\rho^{-n}}{n} \leq a_n\leq c(k)\frac{(k+2)\rho^{-n}}{n}.
\end{equation}
The counting for Type ($3^-$) is exactly the same as there is a length-preserving bijection between the sets ($3^+$) and ($3^-$). Furthermore, there is a length-preserving bijection between Types ($1$) and ($2$), so it suffices to consider Type ($1$). For Type ($1$), we count words of a given length in the direct (or cartesian) product of the sets $\mathcal{A}_{(3^+)}$ and $\{\Delta^c=x^{2c} \mid c\geq 1\}$; the growth series of the latter is $\frac{z^2}{1-z^2}$, and thus the growth series of geodesics of Type (1) is $\mathcal{A}_{(3^+)}(z)\frac{z^2}{1-z^2}$ (see \cite[Thm. I.1, p.27]{Flajolet2009}), with the same minimal value real root of the denominator as that of $\mathcal{A}_{(3^+)}(z)$, and therefore the same asymptotics as $\mathcal{A}_{(3^+)}$. The cyclic representatives are counted as for $\mathcal{A}_{(3^+)}(z)$.

Thus the conjugacy classes of length $n \geq 1$ of each of the Types (1), (2), ($3^+$) and ($3^-$) have asymptotics $\sim \frac{\rho^{-n}}{n}$, and their union will therefore have the same asymptotics.
\end{proof}
We still need to consider several kinds of conjugacy classes, those with representatives of Types $(3^{0+}U)$, $(3^{0+}N)$, $(3^{0-}U)$, $(3^{0-}N)$ and $(3^{0*})$. 
It will be convenient to consider unique and non-unique geodesic representatives together in the context of Type $(3^{0+})$, which are all the words of Types $(3^{0+}U)$ or $(3^{0+}N)$ (see Proposition \ref{30 cases}) (similarly, 
Type $(3^{0-})$ geodesics are the words of Types $(3^{0-}U)$ or $(3^{0-}N)$) and we proceed with Types $(3^{0+})$ and $(3^{0-})$.

\begin{prop}\label{prop:oddB}
    The number of all conjugacy classes of length $n \geq 1$ with representatives of Types $(3^{0+})$ and $(3^{0-})$ has  asymptotics $\sim \frac{\mu^{-n}}{n}$, where $\mu$ is the real root of minimal absolute value of $q(z)=1-2z^2- \dots - 2z^{k+1}$.
\end{prop}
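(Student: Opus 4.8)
The plan is to follow the strategy of \cref{prop:oddA}, but now accounting for the fact that Types $(3^{0+})$ and $(3^{0-})$ contain non-unique geodesics, so split cyclic permutations (\cref{defn: split odds}) must replace ordinary cyclic permutations. Since the map $x\leftrightarrow x^{-1}$, $y\leftrightarrow y^{-1}$ is a length-preserving bijection carrying Type $(3^{0+})$ onto Type $(3^{0-})$, it suffices to count conjugacy classes with a representative of Type $(3^{0+})$ and double. Recalling $(3^{0+}) = (3^{0+}U)\sqcup(3^{0+}N)$ (\cref{30 cases}) and that, by \cref{prop:prod equal} and \cref{prop:conj geo B}, a conjugacy class with a Type $(3^{0+}U)$ (resp.\ $(3^{0+}N)$) representative has all its conjugacy geodesics in $\overline{\mathcal{A}}$ and equal up to cyclic permutation (resp.\ in $\overline{\mathcal{B}_{+}}$ and equal up to split cyclic permutation), the two subcases can be handled separately.

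First I would compute the growth rate of the set of Type $(3^{0+})$ \emph{words}. By \cref{prop:product growth} the prefix/suffix blocks do not affect the growth rate, so using the normal form of \cref{rmk:alternative odd a} a Type $(3^{0+})$ word is built from an initial block, a nonempty run of ``negative macro-syllables'' $y^{-k}A_s$, and a (possibly empty, and nonempty exactly for Type $(3^{0+}N)$) run of ``positive macro-syllables'' $y^{k+1}A_s$, where each middle block $A_s$ is a product of syllables $xy^{\beta}$ with $\beta\in[-(k-1),k]\cap\mathbb{Z}_{\neq 0}$ followed by a terminal $x$. Hence the middle-block generating function is $B(z) = z/p_0(z)$ with $p_0(z) = 1-2z^2-\dots -2z^k-z^{k+1}$, so a nonempty run of negative macro-syllables has generating function with denominator $p_0(z)-z^{k+1} = q(z)$, while a run of positive macro-syllables has denominator $p_0(z)-z^{k+2} = p(z)$, the polynomial of \cref{prop:oddA}. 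Since $q(z)-p(z) = -z^{k+1}(1-z)<0$ on $(0,1)$ and $q(z)-p_0(z) = -z^{k+1}<0$ on $(0,\infty)$, the least positive roots satisfy $\mu<\rho$ and $\mu<\mu_0$ (with $\mu_0$ the least root of $p_0$), so $\mu$ is the dominant singularity of the full generating function; the elementary estimates as in \cref{prop:oddA} ($q(0)=1$, $q(1/\sqrt2)<0$, $q'<0$ on $(0,\infty)$, and pairing consecutive monomials to exclude roots in $(-1/\sqrt2,0]$ and on $|z|=\mu$) show $\mu$ is a simple root of minimal modulus. Thus the number of Type $(3^{0+})$ words of length $n$ is $\sim c\,\mu^{-n}$; imposing ``at least one $y^{-k}$'' (and, for Type $N$, ``at least one $y^{k+1}$'') only removes subsets of strictly smaller growth rate, so it leaves this unchanged.

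Next I would pass to conjugacy classes. For Type $(3^{0+}U)$ the argument is verbatim that of \cref{prop:oddA}: conjugacy is cyclic permutation, a word of length $n$ with $r$ syllables ($\tfrac{n}{k+2}\le r\le \tfrac n2$) has between $\tfrac{n}{k+2}$ and $n$ cyclic permutations, proper powers are negligible, and one gets $\sim \mu^{-n}/n$ conjugacy classes. For Type $(3^{0+}N)$, \cref{prop:conj geo B} reduces us to counting $\overline{\mathcal{B}_{+}}$-words of length $n$ up to split cyclic permutation with fixed separator tuple $Y_{a_{\tau_1,\tau_2}}$. The key estimate is that such a word has $\Theta(n)$ split cyclic permutations inside $\overline{\mathcal{B}_{+}}$: the Case 1 rotations give $\tau+1$ of them, and for each block $A_t$ the admissible Case 2 splits $A_t = A_{t_1}A_{t_2}$ (those still satisfying the end-conditions of \cref{B bar}) form a positive fraction of $|A_t|$; since each separator has length $\ge k$ while each middle block has length $\ge 1$, one computes $n\le (k+2)L + (k+1)$ where $L$ is the total block length, so $L \ge \tfrac{n}{k+2}-O(1)$, hence $\Theta(n)$. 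Dividing, the number of Type $(3^{0+}N)$ conjugacy classes of length $n$ is again $\sim \mu^{-n}/n$. Adding the two contributions and doubling for Type $(3^{0-})$ gives the claim.

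The main obstacle is the split-cyclic-permutation bookkeeping in the last step: establishing the \emph{two-sided} bound $c_1 n \le \#\{\text{split cyclic permutations of } w\} \le c_2 n$ for a length-$n$ word of Type $(3^{0+}N)$. This requires verifying that a fixed positive fraction of the internal positions of the blocks produce split cyclic permutations that still lie in $\overline{\mathcal{B}_{+}}$ (i.e.\ respect the opposite-endpoint and no-$x^2$ boundary conditions), and that words with non-trivial split-cyclic symmetry are negligible — both analogous to, but more delicate than, the treatment of proper powers in \cref{prop:oddA}.
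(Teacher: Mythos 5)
Your proposal is correct, but it takes a genuinely different route from the paper on the key point — how to handle the non-uniqueness and the split cyclic permutations. The paper's proof exploits the relation $y^{k+1}=_G y^{-k}\Delta=y^{-k}x^2$ and the centrality of $\Delta$ to replace each Type $(3^{0+})$ word $w$ of length $n$ by a word $u$ in which \emph{every} separator is $y^{-k}$, at the cost of shortening the word by $\tau_2$; since all separators of $u$ are then identical, split cyclic permutation of $w$ becomes ordinary cyclic permutation of $u$, the relevant word set is exactly $\mathcal{A}_{(3^{0+})}$ with generating function $1/q(z)$, and the length discrepancy is absorbed by the cumulative-growth estimate $\sum_{i=0}^{n/(k+1)}\mu^{-(n-i)}/(n-i)\sim\mu^{-n}/n$ from \cite[Lemma 3.2]{coornaert_asymptotic_2005}. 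You instead keep the words as they are, compute the generating function of Type $(3^{0+})$ directly via the macro-syllable decomposition (arriving at the same dominant denominator $q(z)=p_0(z)-z^{k+1}$, which is a good consistency check, and correctly verifying $\mu<\rho$ and $\mu<\mu_0$), and then divide by a direct two-sided $\Theta(n)$ bound on the size of a split-cyclic orbit. What the paper's substitution buys is precisely the avoidance of your self-identified "main obstacle": there is no orbit-size bookkeeping at all, since cyclic permutation of words over a single separator is the standard situation already handled in \cref{prop:oddA}. What your route buys is that it stays length-preserving and needs no external cumulative-growth lemma. Your orbit-size estimate is sound in outline — admissible Case 2 splits sit at the $x$/$y$ boundaries inside the blocks and hence number at least $L/(k+1)\geq c\,n$, while words with non-trivial split-cyclic symmetry are negligible by the same power-counting as in \cref{prop:oddA} — but these verifications do need to be written out, whereas the paper's argument sidesteps them entirely.
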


\begin{proof}
    Words in $(3^{0+})$ have the form $xy^{b_1}\dots xy^{b_r}$ with $-k \leq b_{i} \leq k+1$ for $1 \leq i \leq r$, where all the $y^{-k}$ terms must appear before the $y^{k+1}$ terms. That is, as in Definition \ref{defn: split odds}, a word $w \in (3^{0+})$ will have the form 
    $$w =A_{1}y^{-k}\dots A_{\tau_{1}}y^{-k}A_{\tau_{1} + 1}y^{k+1}A_{\tau_{1}+2}y^{k+1}\dots y^{k+1}A_{\tau_{1} + \tau_{2} + 1},$$
    where $\tau_{2} \geq 0$; the $A_i$ are words on $\{x, y^j \mid -(k-1) \leq j \leq k\}$, starting and ending with $x$.
Using the relation $y^{k+1}=_{G} y^{-k}\Delta=_{G} y^{-k}x^2$ and the fact that $x^{2}$ is central, we can replace all occurrences of $y^{k+1}$ in $w$ by $y^{-k}$s to get \begin{equation}\label{eqn:30+}
    w=_G w'=\underbrace{\left(A_{1}y^{-k}\dots A_{\tau_{1}}y^{-k}A_{\tau_{1} + 1}y^{-k}A_{\tau_{1}+2}y^{-k}\dots y^{-k}A_{\tau_{1} + \tau_{2} + 1}\right)}_u x^{2\tau_2},
\end{equation}
    and write $w'=ux^{2\tau_2}$, where $u$ is the word inside the parentheses; note that $$|w|=|w'|-\tau_2=|u|+\tau_2.$$
    By Proposition \ref{prop:conj geo B} (see also Definition \ref{defn: split odds}) finding conjugacy representatives for Type $(3^{0+})$ is equivalent to picking representatives, up to cyclic permutations, for the tuples $(A_1,\dots, A_{\tau_{1} + \tau_{2} + 1})$, where splitting some $A_i$ into two subwords is allowed (see Definition \ref{defn: split odds} (ii) Case 2). Counting words $w$ of length $n$ and Type $(3^{0+})$, up to the cyclic permutations of the $A_i$ blocks, is difficult, since $y^{-k}$ and $y^{k+1}$ have different lengths and number of occurrences.

    We will instead consider the word $u$ associated to each $w$ of Type $(3^{0+})$, as in (\ref{eqn:30+}). Since $|u|=|w|-\tau_2$, this is not a length-preserving correspondence, however $u$ and $w$ have the same $A_i$-tuple structure and for every $u$ of length $\leq n$, there is a unique $w$ of length $n$ corresponding to $u$. Then counting words of length $n$ and Type $(3^{0+})$, up to cyclic permutation, is equivalent to counting words, up to cyclic permutation, with structure
\begin{equation}\label{type_u}
    A_{1}y^{-k}\dots A_{\tau_{1}}y^{-k}A_{\tau_{1} + 1}y^{-k}A_{\tau_{1}+2}y^{-k}\dots y^{-k}A_{\tau_{1} + \tau_{2} + 1}
    \end{equation}
of length $n-\tau_2$, for $0\leq \tau_2 \leq \frac{n}{k+1}$.
Words as in (\ref{type_u}) can be collected (after a cyclic permutation) to form the set $\mathcal{A}_{(3^{0+})}=\{xy^{b_1} x y^{b_1}\cdots xy^{b_{r}} \mid 1 \leq i \leq r, -k \leq b_{i} \leq k, b_i\neq 0\}$. This set can be treated as in the proof of Proposition \ref{prop:oddA}: start with the set $\{xy^{b_i} \mid -k \leq b_{i} \leq k, b_i\neq 0\}$, with generating function $2z^2+ \dots + 2z^{k}+ 2z^{k+1}$ to obtain
\begin{equation}\label{case3_0}
\mathcal{A}_{(3^{0+})}(z)=\frac{1}{1-2z^2- \dots - 2z^{k+1}}=\frac{1}{q(z)}
\end{equation}
as the generating function for the set $\mathcal{A}_{(3^{0+})}$. The denominator satisfies $q(0)=1$ and $q\left(\frac{1}{\sqrt{2}}\right)<0$, so it has a root $\mu \in \left(0,\frac{1}{\sqrt{2}}\right)$. Moreover, $q'(\alpha)<0$ for $0<\alpha$, so $\mu$ is a simple root. Also, $q(z)=(1-2z^2)+(-2z^3-2z^4)+ \dots + (-2z^{k} - 2z^{k+1})$ if $k$ is odd, and $q(z)=(1-2z^2)+(-2z^3-2z^4)+ \dots + ( -2z^{k+1})$ if $k$ is even, and since each parenthesis is positive, $q$ has no root in $(-\frac{1}{\sqrt{2}},0]$. Thus the denominator $q(z)$ gives asymptotics of $\frac{\mu^{-n}}{n}$ for the representatives, up to cyclic permutation, of words of length $n$ in $\mathcal{A}_{(3^{0+})}$. 

However, words of Type $(3^{0+})$ of length $n$ correspond to words of a range of lengths in $\mathcal{A}_{(3^{0+})}$ (as explained before, we consider the $u$ counterpart for each $w$), so `spherical' growth for Type $(3^{0+})$ is equivalent to `cumulative' or `ball' growth for words of Type (\ref{type_u}); the asymptotics for conjugacy representatives in $(3^{0+})$ is 
\begin{equation}\label{eq:Coornaert}
\sum_{i=0}^{\frac{n}{k+1}} \frac{\mu^{-(n-i)}}{n-i} \sim \frac{\mu^{-n}}{n},
\end{equation}
where the $\sim$ holds by \cite[Lemma 3.2]{coornaert_asymptotic_2005}, which proves the proposition.
\end{proof}

We finally consider the last type of conjugacy classes for $G(2k+1)$, that is, the ones with representatives of Type $(3^{0*})$; here conjugacy behaviour is `non-standard', as in Example \ref{ex:non-standard}.
As in Proposition \ref{prop:oddB} and its proof, $\mu$ is the real root of minimal absolute value of $q(z)=1-2z^2- \dots - 2z^{k+1}$ and $\mathcal{A}_{(3^{0+})}(z)=\frac{1}{q(z)}$.

\begin{prop}\label{prop:oddBC}
    The growth series for the conjugacy classes with representatives of Type $(3^{0*})$ is given by $\mathcal{A}_{(3^{0+})}(z)+N(z),$ where $\mathcal{A}_{(3^{0+})}(z)$ is a rational series with coefficients having asymptotics $\sim\mu^{-n}$, and $N(z)$ is a series with coefficients having growth rate $< \mu^{-1}$. 
\end{prop}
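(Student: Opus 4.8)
The plan is to compute the generating series $C_{(3^{0*})}(z)$ for the conjugacy classes with a Type $(3^{0*})$ representative exactly, and then isolate its dominant part by singularity analysis. By \cref{prop:conj geo B} together with \cref{defn:30*} and \cref{def:X}, such a conjugacy class is determined by a pair $(\tau_1,\tau_2)$ with $\tau_1,\tau_2\ge 1$ (equivalently, the fixed sign tuple $X_{\tau_1,\tau_2}$) together with the cyclic equivalence class (a ``necklace'') of the tuple $(y^{b_1},\dots,y^{b_\tau})$, $\tau=\tau_1+\tau_2$, where $b_i\in[-k,k]\cap\Z_{\ne 0}$; conversely every such datum arises. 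A representative has length $\tau+\sum_i|b_i|$, so giving the $s$-th necklace bead the weight $z^{1+|b_s|}$ (the $1$ for the $x^{\pm1}$ preceding it, the $|b_s|$ for $y^{b_s}$), the per-bead length series is $S(z):=2z^2+2z^3+\dots+2z^{k+1}$ --- exactly the Type $(3^{0+})$ syllable series, with $\mathcal{A}_{(3^{0+})}(z)=\frac1{1-S(z)}=\frac1{q(z)}$. Since $(\tau_1,\tau_2)$ is recovered from the abelianisation ($G(m)^{\mathrm{ab}}\cong\Z$ for $m$ odd) and hence is a conjugacy invariant, distinct pairs give distinct classes, and for each fixed $\tau$ there are exactly $\tau-1$ admissible pairs; therefore $C_{(3^{0*})}(z)=\sum_{\tau\ge 2}(\tau-1)\,\mathrm{Neck}_\tau(z)$, where $\mathrm{Neck}_\tau(z)$ is the length series of necklaces of $\tau$ beads with bead series $S(z)$.

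Next I would evaluate this sum in closed form. Burnside's lemma applied to the $\Z/\tau$-rotation action gives $\mathrm{Neck}_\tau(z)=\frac1\tau\sum_{d\mid\tau}\phi(d)\,S(z^d)^{\tau/d}$ (a rotation-fixed necklace is $d$ copies of a primitive block of $\tau/d$ beads). Writing $F(z,u)=\sum_{\tau\ge 1}\mathrm{Neck}_\tau(z)u^\tau$, re-indexing by $\tau=de$ and using $\sum_{e\ge1}t^e/e=-\log(1-t)$ yields $F(z,u)=\sum_{d\ge1}\frac{\phi(d)}{d}\bigl(-\log(1-u^dS(z^d))\bigr)$, whence $C_{(3^{0*})}(z)=\partial_u F(z,u)\big|_{u=1}-F(z,1)$ equals
\[
\sum_{d\ge1}\phi(d)\,\frac{S(z^d)}{1-S(z^d)}+\sum_{d\ge1}\frac{\phi(d)}{d}\,\log\bigl(1-S(z^d)\bigr).
\]
The $d=1$ summands equal $\frac{S(z)}{1-S(z)}+\log(1-S(z))=\bigl(\mathcal{A}_{(3^{0+})}(z)-1\bigr)+\log q(z)$, so putting
\[
N(z):=-1+\log q(z)+\sum_{d\ge2}\Bigl(\phi(d)\,\frac{S(z^d)}{1-S(z^d)}+\frac{\phi(d)}{d}\,\log\bigl(1-S(z^d)\bigr)\Bigr)
\]
we obtain $C_{(3^{0*})}(z)=\mathcal{A}_{(3^{0+})}(z)+N(z)$.

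Finally, I would analyse the two pieces. The series $\mathcal{A}_{(3^{0+})}(z)=1/q(z)$ is rational, and as shown in the proof of \cref{prop:oddB} its denominator $q$ has a simple real root $\mu\in(0,1/\sqrt2)$ of minimal modulus; since the exponents occurring in $S$ have gcd $1$ (for $k\ge2$) it is the unique dominant singularity, so $[z^n]\mathcal{A}_{(3^{0+})}(z)\sim c\,\mu^{-n}$. For $N(z)$: each $d\ge2$ summand is analytic on $|z|<\mu^{1/d}$, and since $0<\mu<1$ we have $\mu^{1/d}>\mu$, so these contribute coefficients of exponential rate $\mu^{-1/d}<\mu^{-1}$; while $\log q(z)$ has only a logarithmic singularity at $z=\mu$ (as $q(\mu)=0$, $q'(\mu)\ne0$), so by the standard transfer theorem $[z^n]\log q(z)=\Theta(\mu^{-n}/n)$. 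Hence $[z^n]N(z)=o(\mu^{-n})$, so $N(z)$ contributes only a lower-order correction to the $\sim\mu^{-n}$ growth coming from $\mathcal{A}_{(3^{0+})}(z)$.

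The only genuinely delicate step is the front end: one must trust that split cyclic permutation captures Type $(3^{0*})$ conjugacy exactly (\cref{prop:conj geo B}), that the sign datum $(\tau_1,\tau_2)$ is a conjugacy invariant so the $(\tau-1)$-fold multiplicity is not an overcount, and that the length weighting is as stated; everything after --- the Burnside manipulation and the singularity analysis of a simple pole plus a logarithm --- is routine. A minor caveat is $m=3$ (so $k=1$): there $q(z)=1-2z^2$ has the two dominant roots $\pm1/\sqrt2$, so the coefficients of $\mathcal{A}_{(3^{0+})}(z)$ oscillate with the parity of $n$; this changes neither the exponential rate nor the decomposition $C_{(3^{0*})}(z)=\mathcal{A}_{(3^{0+})}(z)+N(z)$, and can be recorded separately.
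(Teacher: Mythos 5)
Your proposal follows the same front end as the paper's proof: project Type $(3^{0*})$ words onto $\mathcal{A}_{(3^{0+})}$ by forgetting the signs of the $x$'s, observe via \cref{prop:conj geo B} that conjugacy classes correspond to necklaces of $y$-syllables together with a sign datum $(\tau_1,\tau_2)$, and recover $\mathcal{A}_{(3^{0+})}(z)$ as the dominant part. Where you differ is in execution: the paper argues by sandwiching ("the number of cyclic representatives is $\geq \sum_r l_{n,r}/r$, each with $r$ preimages, so the count is $l_n$ plus a power correction"), whereas you compute the series exactly with Burnside's lemma. Your version is sharper and, in fact, exposes a small slip in the paper: since Type $(3^{0*})$ requires both $x$ and $x^{-1}$ to occur, a necklace with $r$ beads has $r-1$ admissible sign data $(\tau_1,\tau_2)$, not $r$ as the paper asserts for $\#\{\phi^{-1}(u)\}$. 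The multiplicity $r-1$ is what produces the $\log q(z)$ term in your closed form.

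The consequence is worth flagging: your $N(z)$ contains $\log q(z)$, whose coefficients are $\Theta(\mu^{-n}/n)$ and hence have exponential growth rate exactly $\mu^{-1}$, not strictly less than $\mu^{-1}$ as the literal wording of the proposition (and the paper's proof, which attributes $N$ entirely to proper powers) would have it. You correctly conclude only $[z^n]N(z)=o(\mu^{-n})$, which is the honest statement and is all that Theorem \ref{thm:odd dihe trans} and Proposition \ref{prop:asymptotics} actually use — the dominant $\sim\mu^{-n}$ behaviour still comes from the rational piece $\mathcal{A}_{(3^{0+})}(z)=1/q(z)$, and one must separately note that adding a function with a logarithmic singularity at $z=\mu$ cannot restore algebraicity. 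So your argument is correct and proves a (correctly) amended version of the statement; if you submit it, state explicitly that the conclusion is ``$N(z)$ has coefficients $o(\mu^{-n})$'' rather than ``growth rate $<\mu^{-1}$'', and keep your parity caveat for $k=1$, where $q(z)=1-2z^2$ forces the coefficients of $1/q(z)$ to vanish in odd degree.
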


\begin{proof}
Recall that words of Type ($3^{0*}$) have the form $x^{-1}y^{b_1}x^{-1}\dots y^{b_{\tau_{1}}}xy^{b_{\tau_{1} + 1}}x\dots xy^{b_{\tau_{1} + \tau_{2}}}$ and that counting conjugacy classes of Type $(3^{0*})$ is equivalent to picking representatives, up to cyclic permutations, for the tuples $(y^{b_1}, \dots, y^{b_{\tau_{1}}},y^{b_{\tau_{1} + 1}},\dots, y^{b_{\tau_{1} + \tau_{2}}})$ while keeping the $X$-structure fixed (see Definition \ref{defn:30*} and Proposition \ref{prop:conj geo B}).

Denote by $L(3^{0*})$ the words of Type $(3^{0*})$, and recall the set $\mathcal{A}_{(3^{0+})}$  from the previous proof: $$\mathcal{A}_{(3^{0+})}=\{xy^{b_1} \cdots xy^{b_r}\mid -k \leq b_{i} \leq k, b_i\neq 0, 1\leq i \leq r\}.$$
Let $\phi:L(3^{0*}) \rightarrow \mathcal{A}_{(3^{0+})}$ be the map extending $\phi(x)=\phi(x^{-1})=x, \phi(y)=y$, 
and associate to every word $w = x^{-1}y^{b_1}x^{-1}\dots y^{b_{\tau_{1}}}xy^{b_{\tau_{1} + 1}}\dots xy^{b_{\tau_{1} + \tau_{2}}}$ of Type ($3^{0*}$) the word $u=\phi(w)$, 
so $u=xy^{b_{1}}\dots xy^{b_{r}}$, where $r=\tau_1+\tau_2$. Since the length of a syllable $xy^{b_i}$ is $\geq 2$, we have $1\leq r\leq \frac{|u|}{2}$.
    Then $|w|=|u|$, $u$ and $w$ have the same $y^{b_i}$-tuple structure, and the number of preimages of $u$ in $L(3^{0*})$ satisfies $\#\{\phi^{-1}(u)\}=r$; that is, there are $r$ different $w$'s which project to the same $u$.
     So we count representatives in $\mathcal{A}_{(3^{0+})}$ up to cyclic permutation, and then multiply the number of such representatives (of length $n$) by the appropriate number of preimages in $L(3^{0*})$ to get the result about conjugacy classes of Type ($3^{0*}$).
    
    Suppose there are $l_n$ words of length $n$, and $l_{n,r}$ words of length $n$ and $r$ syllables ($1\leq r \leq n/2$) in $\mathcal{A}_{(3^{0+})}$, respectively. The number of representatives of length $n$, up to the cyclic permutation of the syllables, in $\mathcal{A}_{(3^{0+})}$ is $\geq
    \sum_{r=1}^{n/2}\frac{l_{n,r}}{r}$ ($\ast$); moreover, since each representative with $r$ syllables has $r$ different preimages (under $\phi$) of Type ($3^{0*}$), the number of conjugacy representatives of length $n$ and Type $3^{0*}$ is $\geq \sum_{r=1}^{n/2}\frac{l_{n,r}}{r} r=\sum_{r=1}^{n/2}l_{n,r}=l_n$.
 The sum ($\ast$) is a lower bound for the number of cyclic representatives in $\mathcal{A}_{(3^{0+})}$ because non-trivial powers have fewer distinct cyclic permutations than non-powers, and the division should be done by a proper divisor of $r$, and not $r$ itself to get the correct number of representatives.

 The number $c_{3^{0*}}(n)$ of conjugacy classes of length $n$ and Type $(3^{0*})$ is thus equal to $l_n$, plus $e_n$, which is (a fraction of) the number of the non-trivial powers in $\mathcal{A}_{(3^{0+})}$. Thus the conjugacy growth series for Type $(3^{0*})$ is the sum of the generating function for $l_n$ and the generating function for $e_n$. The first is $\mathcal{A}_{(3^{0+})}(z)$, and the second is the growth series of a set which is negligible in $\mathcal{A}_{(3^{0+})}$: that is, the growth rate of $e_n$ is strictly smaller than that of $\mathcal{A}_{(3^{0+})}$, which by Proposition \ref{prop:oddB} is $\mu^{-1}$. The result follows. 
\end{proof}

\begin{theorem}\label{thm:odd dihe trans}
    The conjugacy growth series of an odd dihedral Artin group $G(2k+1)$ is transcendental with respect to the free product generating set $\{x,y\}$.
\end{theorem}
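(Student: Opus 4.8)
The plan is to assemble the three growth estimates of this subsection into a single decomposition of the conjugacy growth series $C(z)=C_{G(2k+1),\{x,y\}}(z)$, to split off an explicit \emph{rational} summand coming from the Type $(3^{0*})$ classes, and then to argue that everything left over has coefficients with asymptotics $\sim\frac{\mu^{-n}}{n}$, so that \cite[Thm.~D]{Flajolet1987} applies exactly as in \cite{AC2017, CiobanuE2020}.

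First I would record that the conjugacy classes of $G(2k+1)$ are partitioned according to the type of a minimal length representative: the Garside exponent $c$ and the signs of the $x$-syllables are conjugacy invariants, and \cref{prop:prod equal} together with \cref{prop:conj geo B} show that conjugation never moves a geodesic between the families of \cref{table:oddDA}. Hence
\[
c(n)=c_{\mathrm{std}}(n)+c_{(3^{0+})}(n)+c_{(3^{0-})}(n)+c_{(3^{0*})}(n),
\]
where $c_{\mathrm{std}}(n)$ counts the classes represented by a word of Type $(1),(2),(3^{+}),(3^{-})$ or $(3^{+}\cap 3^{-})$ and the remaining terms count the classes of the indicated types. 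By \cref{prop:oddBC} the Type $(3^{0*})$ contribution is $c_{(3^{0*})}(n)=a_n+N_n$, where $\sum_n a_nz^n=\mathcal{A}_{(3^{0+})}(z)=1/q(z)$ is rational with integer coefficients (since $q(0)=1$) and $\limsup_n|N_n|^{1/n}<\mu^{-1}$. Set $R(z):=C(z)-\mathcal{A}_{(3^{0+})}(z)$, a power series with integer coefficients $r_n=c_{\mathrm{std}}(n)+c_{(3^{0+})}(n)+c_{(3^{0-})}(n)+N_n$.

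The next step is the comparison $\mu<\rho$. Since $p(z)=q(z)+z^{k+1}(1-z)$, we have $p>q$ on $(0,1)$, so $q>0$ on $(0,\mu)$ forces $p>0$ on $(0,\mu)$, and $p(\mu)=\mu^{k+1}(1-\mu)>0$ because $0<\mu<1$; as $p(z)\to-\infty$, its least positive root $\rho$ therefore satisfies $\rho>\mu$. Consequently $c_{\mathrm{std}}(n)\sim\rho^{-n}/n=o(\mu^{-n}/n)$ by \cref{prop:oddA}, $N_n=o(\mu^{-n}/n)$ by the bound on its growth rate, and $c_{(3^{0+})}(n)+c_{(3^{0-})}(n)\sim\mu^{-n}/n$ by \cref{prop:oddB}. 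Since every summand of $r_n$ except possibly $N_n$ is non-negative, there are constants $0<c_1\le c_2$ with $c_1\frac{\mu^{-n}}{n}\le r_n\le c_2\frac{\mu^{-n}}{n}$ for all large $n$; thus $R(z)$ has (eventually) non-negative integer coefficients with asymptotics $\sim\frac{\mu^{-n}}{n}$ in the sense of \cref{def:eq}. By \cite[Thm.~D]{Flajolet1987} the series $R(z)$ is transcendental; as $\mathcal{A}_{(3^{0+})}(z)$ is rational, hence algebraic, and the difference of an algebraic series and a rational series is algebraic, $C(z)=R(z)+\mathcal{A}_{(3^{0+})}(z)$ cannot be algebraic, which is \cref{thm:odd dihe trans}.

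The main obstacle — and the reason the direct argument of \cite{AC2017, CiobanuE2020} does not apply — is that here $c(n)\sim\alpha^n$ rather than $\alpha^n/n$: the rational piece $\mathcal{A}_{(3^{0+})}(z)$ has a simple pole at $z=\mu=\alpha^{-1}$ and dominates, so transcendence is not visible in $c(n)$ itself. One must peel off precisely this rational contribution and then control, \emph{simultaneously} and with exactly matching exponential rate, the five remaining families of conjugacy types; this is where the strict inequality $\mu<\rho$ and the estimate on $N(z)$ from \cref{prop:oddBC} carry the weight of the proof.
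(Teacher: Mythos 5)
Your proposal is correct and follows essentially the same route as the paper: split off the rational summand $\mathcal{A}_{(3^{0+})}(z)$ contributed by the Type $(3^{0*})$ classes, establish $\mu<\rho$ so that the Types $(3^{0+})$ and $(3^{0-})$ contributions dominate with asymptotics $\sim\frac{\mu^{-n}}{n}$, and conclude via \cite[Thm.~D]{Flajolet1987}. Your derivation of $\mu<\rho$ from the identity $p(z)=q(z)+z^{k+1}(1-z)$ is a clean equivalent of the paper's direct comparison of the two root equations, and your explicit closing step (an algebraic series minus a rational one is algebraic, so $C(z)$ cannot be algebraic) merely spells out what the paper leaves implicit.
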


\begin{proof}
We now sum up and compare the asymptotics for all conjugacy classes. 
Let $\rho$ and $\mu$ be as in Propositions \ref{prop:oddA} and \ref{prop:oddB}. Since $\rho$ and $\mu$ are roots of $p(z)$ and $q(z)$, respectively,
$ 0=1-2\mu^{2}\left(1+\dots + \mu^{k-1}\right) = 1-2\rho^{2}-\dots -\left(\rho^{k+1}+\rho^{k+2}\right),
$
and $\rho^{k+2} < \rho^{k+1}$ since $\rho <1$, so
\[ 1-2\mu^{2}\left(1+\dots + \mu^{k-1}\right) > 1-2\rho^{2}\left(1+\dots + \rho^{k-1}\right).
\]
This implies that $\mu < \rho$, and so $\mu^{-1}>\rho^{-1}$. By Proposition \ref{prop:oddA}, the conjugacy asymptotics for Types (1), (2), ($3^+$), ($3^-$), or ($3^+ \cap 3^-$), which is $\sim\frac{\rho^{-n}}{n}$, is therefore dominated by that of 
Types $(3^{0+})$ and $(3^{0-})$, which is $\sim\frac{\mu^{-n}}{n}$. 

By Proposition \ref{prop:oddBC}, the growth series of Type $(3^{0*})$ splits into a rational power series, which will not influence the global conjugacy series complexity, and the series of a set whose growth is again dominated by Types $(3^{0+})$ and $(3^{0-})$. 

    Finally, by \cite[Thm. D]{Flajolet1987} the generating function for any sequence with asymptotics of the form (\ref{inequality:odd}), in our case $\frac{\mu^{-n}}{n}$ (up to multiplicative constants), is transcendental.
\end{proof}
\begin{rmk}\label{rmk:rates}
    We note that by \cite{EdjJohn92} the growth rate of $G(2k+1)$ is $\frac{1}{\mu}$, and a careful analysis of the results in this section shows that the conjugacy growth rate is $\frac{1}{\mu}$ as well, since $c(n)\sim \mu^{-n}$.
\end{rmk}

\subsection{Conjugacy growth for even dihedral Artin groups}\label{sec:even}
We now study the $G(2p)=\mathrm{BS}(p,p)$ groups, and treat $p=2$, $p>2$ even, and $p$ odd separately.

\subsubsection{Conjugacy growth for $G(4)=\mathrm{BS}(2,2)$}
We now consider the conjugacy growth of the $G(4)=\mathrm{BS}(2,2)$ group, and use Table \ref{table:BS(2,2)}.  
{\small
\begin{table}[!h]
    \begin{adjustbox}{width=1\textwidth}
    \begin{tabular}{|c|c|c|c|}  
        \hline
         Reference & Type & Form and conditions  & Unique/Non-unique \\
        \hline \hline
        \multirow{3}{*}{\cref{subsec: p=2}} & (1) & $\begin{array}{c}
            w = (x^{\mu_{1}}y^{\beta_{1}}\dots x^{\mu_{r}}y^{\beta_{r}}\Delta^{c})^{\pm 1}  \\
            c \geq 0, r \geq 1,  \\
            \mu_{1} \in \{0, 1\}, \; \mu_{i} = 1 \;(2 \leq i \leq r), \\
            \beta_{i} \in \Z \; (1 \leq i \leq r), \beta_{i} \neq 0 \; (1 \leq i \leq r-1).
        \end{array}$ & Unique \\
        \cline{2-4}
        & (2) & $\begin{array}{c}
             w = y^{\beta_1}xy^{\beta_2}x\dots y^{\beta_{\tau_{1}}}xy^{\beta_{\tau_{1}+1}}x^{-1}y^{\beta_{\tau_{1} + 2}}x^{-1}\dots x^{-1}y^{\beta_{\tau_{1} + \tau_{2} + 1}}, \\
              \beta_{i} \in \Z_{\neq 0}, \tau_{1}, \tau_{2} \geq 1
        \end{array}$ & Non-unique \\
        \cline{2-4}
        & (3) & $w = y^{\beta}, \; \beta \in \Z_{\neq 0}$ & Unique \\
        \hline
    \end{tabular}
    \end{adjustbox}
    \caption{Subcase of even dihedral Artin groups: $\mathrm{BS}(2,2)=\langle x,y \mid y^{-1}x^2y=x^2 \rangle.$ ($p=2$)}
    \label{table:BS(2,2)}
\end{table}
}

\begin{theorem}\label{thm:BS 22}
    The conjugacy growth series of $\mathrm{BS}(2,2)$ is transcendental, with respect to the free product generating set. The conjugacy growth function satisfies $c(n) \sim 2^n$.
\end{theorem}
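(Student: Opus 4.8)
The plan is to compute the asymptotics of $c(n)$ type by type from Table~\ref{table:BS(2,2)} and the conjugacy normal forms of \cref{subsec: p=2}, and then to read transcendence off the shape of the resulting series. Write $c(n)=c_1(n)+c_2(n)+c_3(n)$ for the contributions of Type~(1), (2), (3) geodesics. Type~(3) consists of the words $y^\beta$, so it contributes $O(1)$ conjugacy classes per length and a rational summand, and is negligible. Since every edge label of the defining graph equals $4=4\cdot1$, this is the $m=4k$ case of \cref{prop:asymptotics}, so I expect $c(n)\sim 2^n$, with $2$ the standard growth rate of $G(4)$ (the denominator $1-z-2z^2=(1-2z)(1+z)$ governs every count below).

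For Type~(1), which is made of unique geodesics and for which conjugacy is exactly cyclic permutation of the syllables $x^{\mu}y^{\beta}$ by the $\mathrm{BS}(2,2)$-analogue of \cref{prop:conj geos cyc perm}, I would apply the cycle construction to the syllable class $\{xy^{\beta}\mid\beta\neq0\}$, whose series is $S(z)=\frac{2z^2}{1-z}$, and then take the Cartesian product with the central powers $\{\Delta^c\}$ (series $\frac{1}{1-z^2}$). The dominant necklace term is $\log\frac{1}{1-S(z)}=\log\frac{1-z}{(1-2z)(1+z)}$, which has a \emph{logarithmic} singularity at $z=\tfrac12$; multiplying by $\frac{1}{1-z^2}$ (analytic at $\tfrac12$, value $\tfrac43$ there) gives $c_1(n)\sim\tfrac43\cdot\frac{2^n}{n}$. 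In particular Type~(1) does not dominate.

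The main work is Type~(2), where conjugacy is \emph{split} cyclic permutation (\cref{def:even_split_reps}, \cref{prop:sets B}). Mirroring the proof of \cref{prop:oddBC}, I would use the length-preserving projection $x^{-1}\mapsto x$ from Type~(2) words onto the alternating words $xy^{\beta_1}\cdots xy^{\beta_r}$: a word with $r$ syllables has exactly $r-1$ preimages in normal form (one for each admissible split $(\tau_1,\tau_2)$ with $\tau_1+\tau_2=r$, $\tau_i\geq1$), while conjugating collapses each fibre to one class up to cyclic permutation of its $r$ syllables. The $r-1$ preimages almost cancel the factor $\tfrac1r$ from necklace counting, so the number of Type~(2) conjugacy classes of length $n$ equals, up to lower-order terms, the number of alternating words of that length, whose series is the \emph{rational} function $\frac{1}{1-S(z)}$ with coefficient asymptotics $\sim\tfrac13\,2^n$. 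Tracking the corrections — the difference between $r$ and $r-1$, the merged end block $A_{\tau+1}A_1$, empty blocks, and periodic necklaces — yields $C_2(z)=\frac{1}{1-S(z)}-\log\frac{1}{1-S(z)}+(\text{series analytic at }\tfrac12)$, and hence $c_2(n)\sim\tfrac13\,2^n$ with a $-\frac{2^n}{n}$ correction. This already gives $c(n)\sim 2^n$ in the sense of \cref{def:eq}.

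Finally I would assemble $C(z)=C_1(z)+C_2(z)+C_3(z)$. Collecting the logarithmic parts, $c(n)=\tfrac13\,2^n+\big(\tfrac43-1\big)\frac{2^n}{n}+o(2^n/n)=\tfrac13\,2^n+\tfrac13\frac{2^n}{n}+o(2^n/n)$; equivalently, near its dominant singularity $z=\tfrac12$, $C(z)$ is the sum of a rational function, $\tfrac13\log\frac{1}{1-2z}$, and a function analytic at $\tfrac12$. Since an algebraic power series has a logarithm-free Puiseux expansion at every singularity on its circle of convergence — equivalently, by \cite[Thm.~D]{Flajolet1987}, a sequence with a nonzero $\frac{\gamma^n}{n}$ term and no intervening $\frac{\gamma^n}{\sqrt n}$ term cannot have an algebraic generating function — $C(z)$ is transcendental, as in the proof of \cref{thm:odd dihe trans}. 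The delicate point, and where I would be most careful, is the split-cyclic bookkeeping for Type~(2) (accurately accounting for merged and empty end blocks and showing proper powers are negligible) together with the verification that the Type~(1) and Type~(2) logarithmic contributions do \emph{not} cancel, i.e. that the coefficient $\tfrac43-1$ is nonzero; this forces one to pin down leading constants rather than just growth rates.
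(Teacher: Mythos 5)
Your overall strategy coincides with the paper's: discard Type (3), show Type (1) contributes $\Theta(2^n/n)$ conjugacy classes, reduce Type (2) to necklace counting of alternating words via the projection $x^{-1}\mapsto x$ so that its series is a rational function plus lower-order terms, and conclude with \cite[Thm. D]{Flajolet1987}. Two points of genuine divergence are worth recording. First, for Type (1) the paper does no singularity analysis: it identifies the Type (1) classes with (a scalar multiple of) the conjugacy classes of $(\Z/2\Z\ast\Z)\times\Z$ and imports both transcendence and the $\frac{2^n}{n}$ asymptotics from the hyperbolic-group result of \cite{AC2017}. Your explicit computation via $\log\frac{1}{1-S(z)}\cdot\frac{1}{1-z^2}$ is a fine substitute, but the constant $\frac43$ depends on conventions you have not fixed (whether $c\geq 0$ or $c\geq 1$, and the $\pm1$ branch in Table \ref{table:BS(2,2)}), so it should not be quoted without that bookkeeping. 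Second, and more importantly, your fibre count of $r-1$ preimages per alternating word with $r$ syllables (one per split $\tau_1+\tau_2=r$ with $\tau_1,\tau_2\geq1$) is the correct one — the paper's adaptation of \cref{prop:oddBC} asserts $r$ — and it is precisely this $-1$ that produces your negative $\Theta(2^n/n)$ correction to Type (2), hence the cancellation question you flag. The paper's argument, which asserts that everything beyond the rational part of Type (2) has growth rate $<2$, never meets this issue, so in your formulation the non-cancellation check is not optional. A way to settle it without pinning down constants: the Type (2) deficit $\sum_r l_{n,r}/r$ is, up to the negligible proper powers, exactly the necklace count of the words $xy^{\beta_1}\cdots xy^{\beta_r}$, i.e.\ of one sign-branch of Type (1) with $c=0$; it is therefore cancelled by that single branch, and the remaining Type (1) contributions (the $x^{-1}$-branch and the classes with $c\neq0$) still supply a positive $\Theta(2^n/n)$ term, so the logarithmic singularity survives and transcendence follows as in \cref{thm:odd dihe trans}.
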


\begin{proof} 
First note that conjugacy classes of Type (3) have rational growth, so we only need to focus on Types (1) and (2).
The conjugacy growth for Type (1) elements can be seen as (a scalar multiple of) the conjugacy growth of the direct product of $G_1=\Z/2\Z \ast \Z =\langle x,y\rangle$ and an infinite cyclic group $G_2$ generated by $\Delta$. Since $G_1$ is (non-virtually cyclic) hyperbolic and $G_2$ is cyclic, the conjugacy growth series of Type 1 elements is the product of a transcendental (by \cite{AC2017}) and a rational series, so is transcendental. Moreover, by standard arguments, the asymptotics of Type (1) conjugacy classes is $\frac{\alpha^{-n}}{n}$, where $\alpha^{-1}$ is the standard growth rate of $\Z/2\Z \ast \Z$ (by \cite{AC2017}). A quick counting argument shows that in fact $\alpha^{-1}=2$.

Finding the asymptotics of conjugacy classes of Type (2) is similar to the method used in the proof of Proposition \ref{prop:oddBC}. The only difference is that instead of using the set $\{xy^{b_1} \cdots xy^{b_r}\mid -k \leq b_{i} \leq k, b_i\neq 0, 1\leq i \leq r\}$ to project onto and count representatives, up to cyclic permutation of the syllables, we take the exponents $b_i$ to be unbounded, that is $b_i \in \Z$. Thus the role of $\mathcal{A}_{(3^{0+})}$ is now taken by the hyperbolic (i.e. infinite order) elements in $\Z/2\Z \ast \Z$, which have rational growth series that we denote by $S(z)$ and growth rate of $\alpha^{-1}$. 

\comm{Let $\phi:BS(2,2) \rightarrow \Z \ast \Z/2\Z$ be the natural projection sending $x$ and $y$ to the standard generators of $\Z \ast \Z/2\Z$, which we denote as $x$ and $y$ by abuse of notation. We reduce counting of Type (2) to Type (1) representatives,
    by associating to every word $$w = A_{1}xA_{2}x\dots A_{\tau_{1}}xA_{\tau_{1}+1}x^{-1}A_{\tau_{1} + 2}x^{-1}\dots x^{-1}A_{\tau_{1} + \tau_{2} + 1}$$ of Type (2) the word $u=\phi(w)$, with only occurrences of $x$ and none of $x^{-1}$ (since $x$ is an involution in $\Z \ast \Z/2\Z$, we have $x^{-1}=x$). In particular, 
    $$u=A_{1}x\dots A_{\tau_{1}}xA_{\tau_{1} + 1}xA_{\tau_{1}+2}x\dots x A_{\tau_{1} + \tau_{2} + 1}.$$
    Note that $|w|=|u|$, $u$ and $w$ have the same $A_i$-tuple structure, and $u$ is a word of Type (1) with $c=0$, so we can rewrite $u$ as $x^{\mu_{1}}y^{\beta_{1}}\dots x^{\mu_{r}}y^{\beta_{r}}$ (as in Table \ref{table:BS(2,2)}, Type (1)). Then $\#\{\phi^{-1}(u)\}=r$, that is, there is $r$ different $w$'s which project to the same element, where $1 \leq r=\tau_1+\tau_2\leq \frac{n}{2}$. 

    By Section \ref{subsec: p=2} (see also Proposition \ref{prop:conj geos cyc perm}), finding conjugacy classes of Type $(2)$ is equivalent to picking representatives, up to cyclic permutations, for the tuples $(A_1,\dots, A_{\tau_{1} + \tau_{2} + 1})$ where splitting some $A_i$ into two subwords is allowed (see Definition \ref{def:even_split_reps}). Thus counting representatives $w$ of length $n$ of Type $(2)$, up to cyclic permutation, is equivalent, because of the same $A_i$-tuple structure, to counting representatives $\phi(w)$, up to cyclic permutation, and then multiplying the number of latter representatives by the appropriate number of syllables.
    In the first instance, counting cyclic permutation representatives of the words $\phi(w)$ will amount to dividing the number of elements of length $n$ in $\Z \ast \Z/2\Z$ by the number $r$ of syllables in $\phi(w)$; however, since each $\phi(w)$ gives $r$ different conjugacy classes of Type (2) in $\mathrm{BS}(2,2)$, the division by $r$ gets undone by the multiplication with $r$ in the computation of representatives or Type (2), so we obtain that the number of conjugacy classes of Type (2) in $\mathrm{BS}(2,2)$ is the same as the growth of \myRed{hyperbolic elements} in $\Z \ast \Z/2\Z$. However, this gives an under-counting of the conjugacy classes of Type (2) because in the case of powers of Type (2) the division should be done by a proper divisor of $r$, and not $r$ itself.
}
Similar to Proposition \ref{prop:oddBC}, the conjugacy growth of Type (2) is equal to (a scalar multiple of) the standard growth of hyperbolic elements of $\Z \ast \Z/2\Z$, plus the growth of a subset $N$ of elements which are non-trivial powers in $\Z \ast \Z/2\Z$. Thus the conjugacy series for Type (2) is the sum of the rational series $S(z)$ and that of a set with growth rate $<2$. If we exclude the words counted by $S(z)$ (because they give rationality), the remaining conjugacy classes of Type (1) and (2) have asymptotics $\sim\frac{2^{-n}}{n}$, as Type (1) dominates the asymptotics of set $N$. As in the previous section, this gives an overall transcendental conjugacy growth series.    

It can be seen from the above discussion that the dominating asymptotics comes from $S(z)$, which has a growth rate of $2$, so the conjugacy growth asymptotics for $BS(2,2)$ is $\sim 2^n$. The standard growth rate of $BS(2,2)$ is $2$ by \cite[p.290]{EdjJohn92}, so the conjugacy growth rate is the (standard, equal to the conjugacy) growth rate of $\Z \ast \Z/2\Z$, which is $\alpha^{-1}=2$ as well.
\end{proof}

\subsubsection{Conjugacy for the even dihedral Artin groups $G(2p)=\mathrm{BS}(p,p)$ with $p=2k$}
In this section we assume $k\geq 2$ and use the classification of geodesics in Table \ref{tab:BS(p,p)}.
\begin{table}[h]
    \begin{adjustbox}{width=1\textwidth}
    \begin{tabular}{|c|c|c|c|}
        \hline
        Reference & Type & Form and conditions  & Uniqueness \\
        \hline \hline
        \cref{prop:geos 2k} & (1) & $\begin{array}{c}
            w = (x^{\mu_{1}}y^{\ep_{1}}\dots x^{\mu_{n}}y^{\ep_{n}}\Delta^{c})^{\pm 1}  \\
            c, n \geq 1,  \\
            \mu_{1} \in [0, 2k-1] \cap \Z, \; \mu_{i} \in [1, 2k-1] \cap \Z \;(2 \leq i \leq n), \\
            \ep_{i} \in \Z \; (1 \leq i \leq n), \ep_{i} \neq 0 \; (1 \leq i \leq n-1).
        \end{array}$
          & \multirow{2}{*}{Unique}\\\cline{1-3}
        \multirow{2}{*}{Prop. \ref{prop:geos 2k}, Def. \ref{rmk:alternative}}& $(2a)$ & $\begin{array}{c}
            w = A_{1}x^{j}A_{2}x^{j}\dots A_{\tau_{1}}x^{j}A_{\tau_{1}+1}, \\
            k\leq j \leq 2k-1,\\
            A_{s} \; \text{reduced words over} \; x^{\alpha},y^{\beta}, \\
             \alpha \in [-2k+j+1, j-1] \cap \mathbb{Z}_{\neq 0}, \beta \in \Z_{\neq 0}\\ 
        \end{array}$  & \\
        \cline{2-4}
        & $(2b)$ & $\begin{array}{c}
            w = A_{1}x^{j}A_{2}x^{j}\dots A_{\tau_{1}}x^{j}A_{\tau_{1}+1}x^{-(2k-j)}A_{\tau_{1} + 2}x^{-(2k-j)}\dots x^{-(2k-j)}A_{\tau_{1} + \tau_{2} + 1},  \\
            A_{s} \; \text{as in} \; (2a) \\
            \tau_{2} \geq 1
            \end{array}$ & Non-unique \\
            \hline
        \cref{prop:geos 2k} & $(3)$ & $\begin{array}{c}
        w = x^{\alpha}y^{\ep_{1}}x^{\nu_{1}}\dots x^{\nu_{n}} \\
        n \geq 0, \ep_{i} \in \Z_{\neq 0} \; (1 \leq i \leq n)   \\
        \alpha, \nu_{n} \in [-(k-1), k-1] \cap \Z, \; \nu_{i} \in [-(k-1), k-1] \cap \Z_{\neq 0} \; (1 \leq i \leq n-1)  \\
        \end{array}$  & Unique \\
        \hline

    \end{tabular}
    \end{adjustbox}
    \caption{Even dihedral Artin groups: $\mathrm{BS}(p,p)$ with $p=2k>2$}
    \label{tab:BS(p,p)}
\end{table}
\comm{
The growth rate of Type 1 words is given by $1-\frac{2t}{1-t}(t+ \dots +t^{2k-1})$.

Compare this to the denominator of $\gamma_2(t)$ in\cite{EdjJohn92}.

Type 1/$\gamma_{2}(t)$:
\[ \text{Type 1}(t) = -\frac{2t^{2k+1}\left(t^k-1\right)\left(t^k+1\right)}{(t-1)\left(-2t^{2k+1}+t^2+2t-1\right)}
\]
The main factor of the denominator of $\gamma_{2}(t)$, that is, the polynomial $p(t)=-2t^{2k+1}+t^2+2t-1$, satisfies $p(0)=-1$ and $p(\frac{1}{2})>0$, so it has a root $\rho \in (0,\frac{1}{2})$. Moreover, $p'(\alpha)>0$ for $0<\alpha<\frac{1}{2}$, so $\rho$ is a simple root.

For Type (2a): the highest growth rate is achieved when $j=k$, and so the range of the $x$-exponents is $[-(k-1),k]$.
This contains the range of $x$-exponents for Type (3), so will have a large growth rate than Type (3), which has a higher growth rate than Type (1).

For Type (2b): the highest growth rate is achieved when $j=k$, and so the range of the $x$-exponents is $[-(k-1),k]$ in the first half of the words and $[-k,k-1]$ in the second half of the word. Since there is a length-preserving bijection between the intervals $[-(k-1),k]$ and $[-k,k-1]$, we can take $[-k,k-1]$ as the range of $x$-exponents. 

Type 2/$\gamma_{3_{j}}(t)$ for $k < j \leq 2k-1$:
\begin{equation*}
\resizebox{.9\hsize}{!}{$-\frac{2t^j\left(t^{j+2k}-t^{j+2k+1}-5t^{j+2k+2}-3t^{j+2k+3}+2t^{2j+2k+1}+2t^{2j+2k+2}+t^{3j}-2t^{2j+1}+6t^{2j+3}+4t^{2j+4}-2t^{3j+1}-2t^{3j+2}-2t^{3j+3}-3t^{3j+4}+2t^{4j+2}+2t^{4k+1}\right)}{\left(2t^{2j}-2t^{j+1}-3t^{j+2}+t^j+2t^{2k}\right)\left(-2t^{j+1}-3t^{j+2}+2t^{2j+2}+t^j+2t^{2k+1}\right)}$}
\end{equation*}

Type 2/$\gamma_{3_{k}}(t)$ for $j = k$:

The growth of Type 3 is given by the denominator of $\gamma_5(t)$ in\cite{EdjJohn92}.
Type 3/$\gamma_{5}(t)$:
\[ \text{Type 3}(t) = \frac{2t\left(2t^k-t-1\right)^2}{(t-1)\left(-4t^{k+1}+3t^2+2t-1\right)}
\]
The main factor of the denominator, that is, the polynomial $p(t)=-4t^{k+1}+3t^2+2t-1$, satisfies $p(0)=-1$ and $p(\frac{1}{2})>0$, so it has a root $\rho \in (0,\frac{1}{2})$. Moreover, $p'(\alpha)>0$ for $0<\alpha<\frac{1}{2}$, so $\rho$ is a simple root.

It appears that the root for Type 3 is smaller than in Type 1, so the growth rate of Type 3 is larger than that of Type 1.}\\
First we establish which of the different types of geodesics grows fastest. This will turn out to be those of Type (2),  that is, Type (2a) and Type (2b) taken together.
\begin{prop}\label{prop: 3 bigger than 1}
    The growth rate of Type (3) words is larger than that of Type (1) words.
\end{prop}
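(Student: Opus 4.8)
The plan is to compare the two exponential growth rates by reducing each to the smallest positive real root of an explicit polynomial, and then to carry out an elementary comparison of those two roots.

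First I would set up the generating functions via \cref{prop:product growth}. Both Type (1) and Type (3) words (see \cref{prop:geos 2k}) are alternating products of $x$-syllables $x^{\sigma}$ and $y$-syllables $y^{\delta}$; the $y$-exponents range over all of $\Z_{\neq 0}$ in both cases, so $\Lambda(z)=\frac{2z}{1-z}$, while the interior $x$-exponents lie in $[1,2k-1]$ for Type (1), giving $\Sigma_1(z)=z+z^2+\dots+z^{2k-1}$, and in $[-(k-1),k-1]\cap\Z_{\neq 0}$ for Type (3), giving $\Sigma_3(z)=2(z+z^2+\dots+z^{k-1})$. By \cref{prop:product growth} the prefix/suffix conventions are irrelevant, and the central factor $\Delta^{c}$ ($c\geq 1$) occurring in Type (1) only multiplies the series by $\frac{z^{2k}}{1-z^{2k}}$, whose growth rate is $1$ and hence does not affect the Type (1) growth rate $1/\rho_1>1$. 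Clearing denominators, the equation $1-\Sigma_1(z)\Lambda(z)=0$ becomes $\phi_1(z):=(1-z)^2-2z^2(1-z^{2k-1})=0$, and similarly the Type (3) growth rate is $1/\rho_3$ with $\phi_3(z):=(1-z)^2-4z^2(1-z^{k-1})=0$; each $\Sigma_i(z)\Lambda(z)$ increases from $0$ to $+\infty$ on $(0,1)$, so each of $\rho_1,\rho_3$ is the unique root of the corresponding $\phi_i$ in $(0,1)$ and (by Pringsheim, the coefficients being nonnegative) it governs the growth rate.

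It then remains to prove $\rho_3<\rho_1$. I would compute $\phi_3(z)-\phi_1(z)=-2z^2\!\left(1-2z^{k-1}+z^{2k-1}\right)$ and use two elementary facts. First, $\rho_1<\tfrac12$, since $\phi_1(0)=1>0$ while $\phi_1\!\left(\tfrac12\right)=-\tfrac14+\left(\tfrac12\right)^{2k}<0$ for $k\geq 2$. Second, $1-2z^{k-1}+z^{2k-1}>0$ for all $z\in\left(0,\tfrac12\right]$: for $k\geq 3$ one has $1-2z^{k-1}+z^{2k-1}\geq 1-2z^{k-1}\geq 1-2\left(\tfrac12\right)^{k-1}\geq\tfrac12$, and for $k=2$ the polynomial $1-2z+z^{3}$ has derivative $3z^{2}-2<0$ on $\left(0,\tfrac12\right]$, hence is bounded below by its value $\tfrac18$ at $z=\tfrac12$. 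Combining these, $\phi_3(\rho_1)<\phi_1(\rho_1)=0$; since $\phi_3(0)=1>0$ and $\phi_3$ is continuous, $\phi_3$ vanishes at some point of $(0,\rho_1)$, so $\rho_3<\rho_1$ and therefore $1/\rho_3>1/\rho_1$, which is the assertion.

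The step I expect to be the main obstacle is the sign analysis of $1-2z^{k-1}+z^{2k-1}$: this polynomial is in fact negative for $z$ slightly below $1$, so no global argument works, and one must first pin down the bound $\rho_1<\tfrac12$ and then restrict attention to the interval $\left(0,\tfrac12\right]$ on which the sign is controlled (with the small case $k=2$ handled separately by monotonicity). The remaining ingredients — identifying the denominators, dealing with the central factor $\Delta^{c}$, and the monotonicity of $\Sigma_i(z)\Lambda(z)$ — are routine.
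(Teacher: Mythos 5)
Your proposal is correct and follows essentially the same route as the paper: both arguments reduce the comparison to the smallest positive roots $\rho_1,\rho_3$ of the denominators $1-\Sigma_i(z)\Lambda(z)$ of the two syllable-product generating functions (discarding the central $\Delta^c$ factor), and both exploit the bound $\rho_1<\tfrac12$. Your execution of the final step --- computing $\phi_3(\rho_1)=\phi_3(\rho_1)-\phi_1(\rho_1)=-2\rho_1^2\left(1-2\rho_1^{k-1}+\rho_1^{2k-1}\right)<0$ via the sign analysis on $\left(0,\tfrac12\right]$ and then invoking the intermediate value theorem --- is a clean substitute for the paper's term-pairing manipulation of the equations $p_1(\rho_1)=p_3(\rho_3)=0$, but it is the same underlying idea.
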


\begin{proof}
    First consider the growth rate of Type (1), noting that we can ignore the Garside element $\Delta^{c}$, using the same argument as \cref{prop:oddA}. Define the set $\mathcal{S}_{(1)} := \{x^{\alpha}y^{\beta} \; | \; \alpha \in [1,2k-1], \beta \in \Z_{\neq 0} \}$. Then the generating function for all words of Type (1) which belong to $\overline{\mathcal{C}}$ (recall \cref{defn:C bar}) is $\mathcal{C}_{(1)}(z) = \frac{1}{1-\mathcal{S}_{(1)}(z)}$, where
    \[ \mathcal{S}_{(1)}(z) = \left(z+z^{2}+\dots + z^{2k-1}\right)\cdot 2z\frac{1}{1-z} = \frac{2z^{2}\left(1+z+\dots + z^{2k-2}\right)}{1-z}
    \]
    The denominator of $\mathcal{C}_{(1)}(z)$ is the polynomial $p_{1}(z) = 1-z-2z^{2}\left(1+z+\dots + z^{2k-2}\right)$, and one can show using a similar argument as in \cref{prop:oddA} that the growth of Type (1) has asymptotics $\sim \frac{\rho_{1}^{-n}}{n}$, where $\rho_{1} \in \left(0, \frac{1}{\sqrt{2}}\right)$ is the smallest simple positive root of $p_{1}(z).$ 
Now we consider the growth rate of Type (3). Define the set $\mathcal{S}_{(3)}(z) := \{x^{\alpha}y^{\beta} \; | \; \alpha \in [-(k-1), k-1], \alpha \neq 0, \beta \in \Z_{\neq 0}\}$. Then the generating function of all words of Type (3) in $\overline{\mathcal{C}}$ is $\mathcal{C}_{(3)}(z) = \frac{1}{1-\mathcal{S}_{(3)}(z)}$, where 
    \[ \mathcal{S}_{(3)}(z) = \left(2z+2z^{2}+\dots + 2z^{k-1}\right)\cdot 2z\frac{1}{1-z} = \frac{4z^{2}\left(1+z+\dots + z^{k-2}\right)}{1-z}.
    \]
    The denominator of $\mathcal{C}_{(3)}(z)$ is the polynomial $p_{3}(z) = 1-z-4z^{2}\left(1+z+\dots + z^{k-2}\right)$, and similar to previous arguments (as in Section \ref{subsec:odd}) the growth of Type (3) has asymptotics $\sim \frac{\rho_{3}^{-n}}{n}$, where $\rho_{3} \in \left(0, \frac{1}{2}\right)$ is the simple root of smallest absolute value.

We claim that $\rho_{3} < \rho_{1}$. Since $p_{1}(\rho_1)=p_{3}(\rho_3)=0$, we get
    \[0= 1-\rho_{3}-4\rho_{3}^{2}\left(1+\rho_{3}+\dots + \rho_{3}^{k-2}\right) = 1-\rho_{1}-2\rho_{1}^{2}\left(1+\rho_{1}+\dots \rho_{1}^{2k-2}\right),
    \]
    and $\rho_{1} < 1$ implies $\rho_{1} + \rho_{1}^{2k-2} < 2\rho_{1}$, $\rho_{1}^{2} + \rho_{1}^{2k-3} < 2\rho^{2}_{1}$ and so forth until $\rho_{1}^{k-1}+\rho_{1}^{k} < 2\rho_{1}^{k-1}$, so 
    \[  1-\rho_{3}-4\rho_{3}^{2}\left(1+\rho_{3}+\dots + \rho_{3}^{k-2}\right) > 1-\rho_{1} -2\rho_{1}^{2}\left(1+2\rho_{1}+\dots + 2\rho_{1}^{k-1}\right).
    \]
    Now $\rho_{1} < \frac{1}{2}$, so $2\rho_{1}^{k-1} < \frac{1}{2^{k-2}} \leq 1$ since $k \geq 2$. Therefore $1+2\rho_{1}^{k-1} < 2$, and so 
    \[ 1-\rho_{3}-4\rho_{3}^{2}\left(1+\rho_{3}+\dots + \rho_{3}^{k-2}\right)
    > 1-\rho_{1} -4\rho_{1}^{2}\left(1+2\rho_{1}+\dots + 2\rho_{1}^{k-2}\right).
    \]
    This implies that $\rho_{3} < \rho_{1}$, and so $\frac{1}{\rho_{3}}>\frac{1}{\rho_{1}}$. Therefore the growth rate of Type (3) is greater than that of Type (1). 
\end{proof}

\begin{prop}\label{prop:j=k}
    The growth rate of words of Type (2) is greatest when $j=k$.
\end{prop}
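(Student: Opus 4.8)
The plan is to follow the pattern of \cref{prop: 3 bigger than 1}: for each admissible $j$ with $k\le j\le 2k-1$ I would produce a rational generating function counting the Type (2) geodesics with separator power $j$, identify its dominant singularity $\rho_{j}$, and show that $\rho_{j}$ (equivalently, the growth rate $1/\rho_{j}$) is extremised at $j=k$.

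First I would set up the relevant $x$-run alphabets. Reading a normalised Type (2) word of \cref{rmk:alternative} as an alternating product of maximal $x$-runs and $y$-runs, one notes that each separator $x^{j}$ or $x^{-(2k-j)}$ occurs as an isolated $x$-run (the adjacent $A_{s}$-blocks start and end with a power of $y$), and that the powers $x^{j}$ and $x^{-(2k-j)}$ cannot occur inside an $A_{s}$-block, so the separators genuinely enlarge the $x$-alphabet to an integer interval. Consequently a Type $2a$ word has its maximal $x$-runs ranging over $[-(2k-j-1),\,j]\setminus\{0\}$, while a Type $2b$ word factors uniquely as an $x^{j}$-half followed by an $x^{-(2k-j)}$-half, whose $x$-runs range over $[-(2k-j-1),\,j]\setminus\{0\}$ and $[-(2k-j),\,j-1]\setminus\{0\}$ respectively; in every case the $y$-runs range over $\Z_{\neq 0}$, with generating function $\Lambda(z)=\frac{2z}{1-z}$. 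By \cref{prop:product growth} (and \cite[Thm.~I.1, p.27]{Flajolet2009}) the associated growth series are rational, with denominator of the form $1-\Sigma(z)\Lambda(z)$ (one such factor for $2a$, a product of two such factors for $2b$), where $\Sigma(z)=\sum_{i=1}^{a}z^{i}+\sum_{i=1}^{b}z^{i}$ is the generating function of the $x$-alphabet and $a+b=2k-1$ is a constant independent of $j$. Exactly as in \cref{prop:oddA}, \cref{prop:oddB} and \cref{prop: 3 bigger than 1}, the polynomial $p(z)=1-z-2z\Sigma(z)$ satisfies $p(0)=1>0$, $p(1)<0$ and has a nonnegative-coefficient reciprocal, so its smallest positive real root $\rho\in(0,1)$ is the dominant singularity and $1/\rho$ is the corresponding growth rate; the growth rate of all Type (2) words with separator power $j$ is then the largest of the $1/\rho$ coming from $2a$ and from the two halves of $2b$.

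The comparison across $j$ is elementary. For fixed $a+b$ and $0<z<1$, the quantity $z^{a}+z^{b}=z^{(a+b)/2}\bigl(z^{(a-b)/2}+z^{-(a-b)/2}\bigr)$ is a strictly increasing function of $|a-b|$ (its second factor equals $2\cosh\bigl(\frac{a-b}{2}\ln z\bigr)$), so $\Sigma(z)=\frac{z}{1-z}\bigl(2-z^{a}-z^{b}\bigr)$ is pointwise largest on $(0,1)$ when $|a-b|$ is least. The relevant pairs are $(a,b)=(j,\,2k-j-1)$ for Type $2a$ and the $x^{j}$-half of $2b$, and $(a,b)=(j-1,\,2k-j)$ for the $x^{-(2k-j)}$-half of $2b$; a direct check shows that, over $j\in\{k,\dots,2k-1\}$, the imbalance $|a-b|$ is minimal when $j=k$ in each case, so $\Sigma_{k}(z)\ge\Sigma_{j}(z)$ on $(0,1)$ for every admissible $j$. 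Hence $p_{k}(z)\le p_{j}(z)$ on $(0,1)$; evaluating at $z=\rho_{j}$ gives $p_{k}(\rho_{j})\le p_{j}(\rho_{j})=0$, and since $p_{k}(0)=1$ the smallest positive root satisfies $\rho_{k}\le\rho_{j}$, i.e. $1/\rho_{k}\ge 1/\rho_{j}$. Applying this to $2a$ and to both halves of $2b$ and taking maxima shows that the growth rate of the Type (2) words with separator power $j$ is at most that with separator power $k$, which is the assertion.

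The main obstacle is bookkeeping rather than analysis: turning the normalised Type (2) words of \cref{rmk:alternative} into the clean alternating $x$-run / $y$-run picture above — in particular justifying the unique factorisation of a Type $2b$ word into its $x^{j}$-half and its $x^{-(2k-j)}$-half (using RR3 together with the fact that the two separator powers are distinguishable and never appear inside the $A_{s}$), handling the possibly-empty prefix and suffix blocks $A_{1},A_{\tau+1}$ (harmless for growth rates by \cref{prop:product growth}), and checking that this factorisation makes the $2b$ series a genuine product so that its radius of convergence is the smaller of the two halves'. Once this dictionary is in place, the only genuinely new ingredient is the one-line convexity inequality for $z^{a}+z^{b}$; everything else is the standard Pringsheim / transfer-matrix machinery already used throughout \cref{growth}.
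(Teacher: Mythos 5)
Your proof is correct and follows essentially the same route as the paper: both set up the rational syllable generating functions for Type (2a) and for the two halves of Type (2b), and deduce the ordering of the dominant singularities from a pointwise comparison $\Sigma_k(z)\ge\Sigma_j(z)$ on $(0,1)$, noting the $j=k$ versus $j=k+1$ symmetry for the (2b) suffixes. Your convexity identity for $z^{a}+z^{b}$ with $a+b$ fixed is just a tidier packaging of the paper's term-by-term inequality, and your intermediate-value argument for comparing the smallest positive roots replaces the paper's monotonicity argument; neither changes the substance.
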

\begin{proof}
Recall that Type (2) is Type (2a) and (2b) together, and note that for a fixed $j$ the $x$ exponents in (2a) have range $ \in [-2k+j+1, j]$, and the same holds for the (2b) prefixes (i.e. $A_{1}x^{j}A_{2}x^{j}\dots A_{\tau_{1}}x^{j}A_{\tau_{1}+1}$) identical to words in (2a), while in the suffixes $x^{-(2k-j)}A_{\tau_{1} + 2}x^{-(2k-j)}\dots x^{-(2k-j)}A_{\tau_{1} + \tau_{2} + 1}$ without $x^j$ the range is $\in [-2k+j, j-1]$. The growth of Type (2b) words can be taken to be the maximum between the growth of the prefixes (identical to (2a)) and the suffixes, as in a direct product.

We first consider the prefixes of words of Type (2), that is, the words of form (2a).
For fixed $j$ with $k\leq j \leq 2k-1$
    define $\mathcal{S}_{(2,j)} := \{x^{\alpha}y^{\beta} \; | \; \alpha \in [-2k+j+1, j], \alpha \neq 0, \beta \in \Z_{\neq 0} \}$. Then the generating function for words of Type (2a) with fixed $j$ is $\mathcal{D}_j(z) = \frac{1}{1-\mathcal{S}_{(2,j)}(z)}$, where
    \[ \mathcal{S}_{(2,j)}(z) = \left(2z+2z^{2}+\dots + 2z^{2k-j-1}+z^{2k-j}+\dots + z^{j}\right)\cdot 2z\frac{1}{1-z}.
    \]
    The denominator of $\mathcal{D}_j(z)$ is $p_j(z) = 1-z-2z\left(2z+\dots + 2z^{2k-j-1}+z^{2k-j}+\dots + z^{j}\right)$. Let $\mu_{j}$ denote the smallest positive root of $p_j(z)$, and note $\mu_{j} < \frac{1}{2}<1$ since $p(0) = 1$ and $p\left(\frac{1}{2}\right) < 0$. We claim that $\mu_{j} \geq \mu_k$ for all $k \leq j \leq 2k-1$. By assumption we have
    \begin{align*}
       0 & =1 - \mu_{j} - 2\mu_{j}\left(2\mu_{j}+\dots + 2\mu_{j}^{2k-j-1}+\mu_{j}^{2k-j} + \dots + \mu_{j}^{j}\right) =p_j(\mu_j) \\
        &= 1-\mu_k-2\mu_k\left(2\mu_k+\dots + 2\mu_{k}^{k-1}+ \mu_{k}^k\right)=p_k(\mu_k).
    \end{align*}
    so $\mu_{j}+2\mu_{j}(2\mu_{j}+\dots + 2\mu_{j}^{2k-j-1}+\mu_{j}^{2k-j} + \dots + \mu_{j}^{j}) =\mu_k+2\mu_k(2\mu_k+\dots + 2\mu_{k}^{k-1}+ \mu_{k}^k)=R(\mu_k)$; furthermore, $\mu_{j}+2\mu_{j}(2\mu_{j}+\dots + 2\mu_{j}^{2k-j-1}+\mu_{j}^{2k-j} + \dots + \mu_{j}^{j}) \leq \mu_j+2\mu_j(2\mu_j+\dots + 2\mu_{j}^{k-1}+ \mu_{j}^k)=R(\mu_j)$ because $\mu_j<1$ and $j\geq k$. Then $R(\mu_j)\geq R(\mu_k)$ gives $\mu_{j} \geq \mu_k$ and so $\frac{1}{\mu_{k}} \geq \frac{1}{\mu_j}$, with equality only when $k=j$.

    A similar argument shows that the growth rate $\nu_j$ of suffixes in Type (2b) is maximal when $j=k$ or $j=k+1$, that is, $\nu_k=\nu_{k+1}$: in this case the range of the $x$-exponents is $[-k, k-1]$ or $[-(k-1),k]$, the second interval being symmetric to the first and giving a length-preserving bijection between the suffixes with $j=k$ and those with $j=k+1$.
\comm{
For a fixed $j$ with $k\leq j \leq 2k-1$
    define $\mathcal{S}_{(2,j)}(z) := \{x^{\alpha}y^{\beta} \; | \; \alpha \in [-2k+j, j], \alpha \neq 0, \beta \in \Z_{\neq 0} \}$. Then the generating function for words of Type (2) with fixed $j$ is $\mathcal{D}_j(z) = \frac{1}{1-\mathcal{S}_{(2,j)}(z)}$, where
    \[ \mathcal{S}_{(2,j)}(z) = \left(2z+2z^{2}+\dots + 2z^{2k-j}+z^{2k-j+1}+\dots + z^{j}\right)\cdot 2z\frac{1}{1-z}
    \]
    The denominator is therefore $p_j(z) = 1-z-2z\left(2z+\dots + 2z^{2k-j}+z^{2k-j+1}+\dots + z^{j}\right)$. Let $\mu_{j}$ denote the smallest positive root of $p_j(z)$, and note $\mu_{j} < \frac{1}{2}<1$ since $p(0) = 1$ and $p\left(\frac{1}{2}\right) < 0$. We claim that $\mu_{j} < \mu_{j+1}$ for all $k \leq j \leq 2k-1$. By assumption we have
    \begin{align*}
        &1 - \mu_{j} - 2\mu_{j}\left(2\mu_{j}+\dots + 2\mu_{j}^{2k-j}+\mu_{j}^{2k-j+1} + \dots + \mu_{j}^{j}\right) \\
        &= 1-\mu_{j+1}-2\mu_{j+1}\left(2\mu_{j+1}+\dots + 2\mu_{j+1}^{2k-j-1}+\mu_{j+1}^{2k-j}+\dots + \mu_{j+1}^{j+1}\right).
    \end{align*}
    Since $j+1 > 2k-j$ and $\mu_{j+1} < 1$, then $\mu_{j+1}^{j+1} < \mu_{j+1}^{2k-j}$, and so 
    \begin{align*}
        &1 - \mu_{j} - 2\mu_{j}\left(2\mu_{j}+\dots + 2\mu_{j}^{2k-j}+\mu_{j}^{2k-j+1} + \dots + \mu_{j}^{j}\right)\\
        &> 1 - \mu_{j+1} - 2\mu_{j+1}\left(2\mu_{j+1}+\dots + 2\mu_{j+1}^{2k-j}+\mu_{j+1}^{2k-j+1} + \dots + \mu_{j+1}^{j}\right).
    \end{align*}
    This implies that $\mu_{j} < \mu_{j+1}$ and in particular, $\mu_{k} < \mu_{j}$ for all $k \leq j \leq 2k-1$.
    }
    The range of $x$-exponents in (2a) is also $[-(k-1),k]$ for $j=k$ so altogether the growth rate of Type (2) words is greatest when $j=k$, and corresponds to $x$-exponents in $[-(k-1),k]$. 
\end{proof}

\begin{theorem}\label{thm:BS(2k,2k)}
    The conjugacy growth series of $\mathrm{BS}(p,p)$ with $p=2k>2$ is transcendental, with respect to the free product generating set.
\end{theorem}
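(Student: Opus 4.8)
The plan is to follow the template of Theorem~\ref{thm:odd dihe trans}. By Corollary~\ref{cor:CDodd}, every conjugacy class of $G(4k)$ has a minimal length representative of exactly one of the Types (1), (2a), (2b), (3) of Table~\ref{tab:BS(p,p)}, so the conjugacy growth series splits as $C(z)=C_{(1)}(z)+C_{(2a)}(z)+C_{(2b)}(z)+C_{(3)}(z)$. It then suffices to show that $C(z)$ is a rational series plus a series whose coefficients are $\sim \mu_k^{-n}/n$, where $\mu_k\in(0,\tfrac{1}{2})$ is the root of minimal modulus of $p_k(z)=1-z-2z\bigl(2z+2z^2+\cdots+2z^{k-1}+z^k\bigr)$ from Proposition~\ref{prop:j=k}; transcendence then follows from \cite[Thm.~D]{Flajolet1987}, since adding a rational series to a transcendental one leaves it transcendental.

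For the ``standard'' Types (1), (2a) and (3) the conjugacy geodesics are unique and conjugacy is cyclic permutation of syllables by Proposition~\ref{prop:conj geos cyc perm}, so, exactly as in Proposition~\ref{prop:oddA}, counting conjugacy classes of length $n$ reduces to counting the relevant words of length $n$ up to cyclic permutation of their $x^{\alpha}y^{\beta}$ syllables. Feeding the generating functions $\mathcal{C}_{(1)}(z)=1/p_1(z)$ and $\mathcal{C}_{(3)}(z)=1/p_3(z)$ of Proposition~\ref{prop: 3 bigger than 1} and $\mathcal{D}_j(z)=1/p_j(z)$ of Proposition~\ref{prop:j=k} into the ``divide by the number of syllables'' estimate used in the proof of Proposition~\ref{prop:oddA}, these counts have asymptotics $\sim\rho_1^{-n}/n$, $\sim\rho_3^{-n}/n$ and, for Type (2a) with parameter $j$, $\sim\mu_j^{-n}/n$. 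Now $\rho_3<\rho_1$ by Proposition~\ref{prop: 3 bigger than 1}, $\mu_k\le\mu_j$ for $k\le j\le 2k-1$ by Proposition~\ref{prop:j=k}, and since $p_k(z)=p_3(z)-2z^{k+1}$ one has $p_k(0)=1>0$ and $p_k(\rho_3)=-2\rho_3^{k+1}<0$, whence $\mu_k<\rho_3$. Thus among the standard types the dominant contribution is Type (2a) (summed over all admissible $j$), of asymptotics $\sim\mu_k^{-n}/n$, while Types (1) and (3) are negligible against it.

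It remains to treat the ``non-standard'' Type (2b), where geodesics are non-unique and conjugacy is governed by split cyclic permutations (Proposition~\ref{prop:sets B}); here I would adapt the argument of Proposition~\ref{prop:oddBC} and of the Type (2) analysis in Theorem~\ref{thm:BS 22}. Fix $k\le j\le 2k-1$ and a Type (2b) word
\[w=A_1x^{j}A_2x^{j}\cdots A_{\tau_1}x^{j}A_{\tau_1+1}x^{-(2k-j)}A_{\tau_1+2}x^{-(2k-j)}\cdots x^{-(2k-j)}A_{\tau_1+\tau_2+1}.\]
Since $\Delta=x^{2k}$ is central and $x^{-(2k-j)}=_G x^{j}\Delta^{-1}$, we have $w=_G u\Delta^{-\tau_2}$ with $u=A_1x^{j}A_2x^{j}\cdots A_{\tau_1+\tau_2}x^{j}A_{\tau_1+\tau_2+1}$, a geodesic Type (2a)-shaped word carrying the same block data (it is geodesic because its largest positive and most negative $x$-powers have absolute values summing to $2k-1<2k$, so none of RR1, RR2, RR3 applies). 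The map $w\mapsto(u,\tau_2)$ respects the $A_i$-block structure, and split cyclic permutations of $w$ correspond to cyclic permutations of the $A_i$-blocks of $u$; counting the latter up to cyclic permutation, tracking the number of preimages exactly as for $\mathcal{A}_{(3^{0+})}$ in Proposition~\ref{prop:oddBC} (and passing from spherical to cumulative growth as in Proposition~\ref{prop:oddB} when $j>k$, since then $|u|>|w|$), one finds $C_{(2b)}(z)=S(z)+N(z)$, where $S(z)$ is \emph{rational} with coefficients of growth rate $\mu_k^{-1}$ (dominated by the $j=k$ component, whose prefix $x$-exponent range $[-(k-1),k]$ and suffix range $[-k,k-1]$ are in length-preserving bijection, each producing $p_k$), and $N(z)$ is the generating function of a negligible set of non-trivial powers, of growth rate strictly below $\mu_k^{-1}$.

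Assembling the pieces, $C(z)=S(z)+\bigl(C_{(1)}(z)+C_{(3)}(z)+C_{(2a)}(z)+N(z)\bigr)$, where $S(z)$ is rational and the bracketed series has coefficients $\sim\mu_k^{-n}/n$: the $C_{(2a)}$ summand dominates because $\rho_1^{-1},\rho_3^{-1}<\mu_k^{-1}$ and the growth rate of $N$ is $<\mu_k^{-1}$, and any $\lambda^n$ with $\lambda<\mu_k^{-1}$ is eventually smaller than $\mu_k^{-n}/n$. By \cite[Thm.~D]{Flajolet1987} the generating function of a sequence asymptotic to $\mu_k^{-n}/n$ is transcendental, hence so is $C(z)$. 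I expect the Type (2b) step to be the main obstacle: one must verify carefully that the passage between split cyclic permutations of $w$ and cyclic permutations of the $A_i$-blocks of $u$ is a bijection respecting the block structure, with the length bookkeeping handled as in Propositions~\ref{prop:oddB} and~\ref{prop:oddBC}, and that the over- and under-counting caused by non-trivial powers really is negligible; once this is in place all remaining growth-rate comparisons are routine root estimates of the type already performed in Propositions~\ref{prop:oddA}, \ref{prop: 3 bigger than 1} and~\ref{prop:j=k}.
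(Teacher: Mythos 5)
Your proposal is correct and follows essentially the same route as the paper: the same type decomposition, the same identification of Type (2) with $j=k$ as dominant (your direct polynomial comparison $p_k(z)=p_3(z)-2z^{k+1}$ is a slicker substitute for the paper's exponent-range containment, but proves the same inequality), the same ``cyclic permutation of syllables'' count giving $\sim\mu_k^{-n}/n$ for Type (2a), and the same reduction of Type (2b) to a rational series plus a negligible power-correction via the preimage-counting of Proposition \ref{prop:oddBC}. The only cosmetic difference is that you make explicit the $x^{-(2k-j)}=_G x^j\Delta^{-1}$ rewriting for Type (2b), which the paper spells out only in the $p=2k+1$ case but clearly intends here as well.
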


\begin{proof}
When considering all types of geodesics in Table \ref{tab:BS(p,p)}, we note that for $j=k$ the $x$-exponents of Type (2) words contain the range of $x$-exponents for Type (3), and so will have a larger growth rate than Type (3). This, combined with \cref{prop: 3 bigger than 1}, implies that the growth rate of Type (2) when $j=k$ will dominate, so Type (1) and (3) do not play a role in the comjugacy growth asymptotics.

For Type (2b), using similar arguments to those in Section \ref{subsec:odd} leads to counting split cyclic representatives of words of the form
\[ w = A_{1}x^{k}A_{2}x^{k}\dots A_{\tau_{1}}x^{k}A_{\tau_{1}+1}x^{-k}A_{\tau_{1}+2}x^{-k}\dots x^{-k}A_{\tau_{1}+\tau_{2}+1}, 
\]
where $A_i$ are reduced words over $x^{\alpha},y^{\beta}$ with
             $\alpha \in [-(k-1), k-1] \cap \mathbb{Z}_{\neq 0}, \beta \in \Z_{\neq 0}$ (see Prop. \ref{prop:sets B}). That is, we count the tuples $(A_1, \dots, A_{\tau_{1}+\tau_{2}+1})$ up to cyclic permutations, while keeping the $X$-structure fixed. Similar to the proof of Proposition \ref{prop:oddBC}, the growth series of such split cyclic representatives is the sum of a rational series whose coefficients have asymptotics $\sim \mu_k^{-n}$, and a series whose coefficients have growth rate strictly lower than $\mu_k^{-1}.$

             For Type (2a) we get a conjugacy growth series whose coefficients have asymptotics $\frac{\mu_k^{-n}}{n}$, so the series is transcendental. 
             Altogether, the conjugacy growth asymptotics for $BS(p,p)$ is $\mu_k^{-1}$ (from Type (2b)) and the series is transcendental as the sum of a rational series (from Type (2b)) together with series whose coefficients have asymptotics $\sim\frac{\mu_k^{-n}}{n}.$
\comm{ As in the proof of \cref{prop:oddB} we write $x^{-k} = x^{-2k}x^{k}$ and move the central term $x^{-2k}$ to the right; the cumulative growth of cyclic representatives of words of the form \[ u = A_{1}x^{k}A_{2}x^{k}\dots A_{\tau_{1}}x^{k}A_{\tau_{1}+1}x^{k}A_{\tau_{1}+2}x^{k}\dots x^{k}A_{\tau_{1}+\tau_{2}+1}
\] can be counted instead, which gives asymptotics as in (\ref{eq:Coornaert}) that dominate all the other types of conjugacy classes, and so
transcendental growth follows.
}
\end{proof}

\subsubsection{Conjugacy for the even dihedral Artin groups $G(2p)=\mathrm{BS}(p,p)$, $p=2k+1$}
We use the classification of geodesics in Table \ref{tab:p=2k+1} and similar techniques as for $p=2k$. We only sketch the main steps in our argument since they follow the ideas from previous sections. 

For Type (2) we proceed as in Proposition \ref{prop:j=k} and can show that the maximum growth rate for words of Type (2a) is obtained when $j=k$, and the range of $x$-exponents is $[-k,k]$; in this case the range of $x$-exponents in the suffixes in (2b) is $[-(k+1),k-1]$. For the suffixes of Type (2b) words, the maximum growth rate is obtained when $j=k+1$ and their $x$-exponents are in $[-k,k]$, which will give a range of $[-(k-1),k+1]$ for (2a). Since there is a length-preserving symmetry between the (2a) $\cup$ (2b) words with $j=k$ and those with $j=k+1$ we can choose $j=k$ and conclude that 
 the growth rate of Type (2) reaches a maximum when $j=k$. The conjugacy growth of Type (2) classes can then be obtained by counting cyclic representatives, as per Section \ref{sec:G(4k+2)}, of words of the form
\[ w = A_{1}x^{k}A_{2}x^{k}\dots A_{\tau_{1}}x^{k}A_{\tau_{1}+1}x^{-(k+1)}A_{\tau_{1}+2}x^{-(k+1)}\dots x^{-(k+1)}A_{\tau_{1}+\tau_{2}+1}. 
\]
Here we use similar techniques as in \cref{prop:oddB}: since $x^{-(k+1)} = x^{-(2k+1)}x^{k}$, the central term $x^{-(2k+1)}$ can be moved to the right to count words of the form \[ w' = A_{1}x^{k}A_{2}x^{k}\dots A_{\tau_{1}}x^{k}A_{\tau_{1}+1}x^kA_{\tau_{1}+2}x^k\dots x^kA_{\tau_{1}+\tau_{2}+1}\Delta^{l}, 
\] similar to (\ref{eqn:30+}). The counting of conjugacy classes then boils down to the cumulative growth of words like $w'$, up to cyclic permutation. This gives asymptotics of the form $\sim \mu^{-n}/n$, while the Type (3) conjugacy classes also give $\sim \mu^{-n}/n$ asymptotics as the $x$-exponent range is the same. Type (2) and (3) dominate the Type (1) growth to give transcendental conjugacy growth for $\mathrm{BS}(2k+1,2k+1)$.
\begin{table}[!h]
    \begin{adjustbox}{width=1\textwidth}
    \begin{tabular}{|c|c|c|c|}
        \hline
        Reference & Type & Form and conditions  & Unique/Non-unique \\
        \hline \hline
        \cref{prop:geos 2k+1} & (1) & $\begin{array}{c}
            w = (x^{\mu_{1}}y^{\ep_{1}}\dots x^{\mu_{n}}y^{\ep_{n}}\Delta^{c})^{\pm 1}  \\
            c, n \geq 1,  \\
            \mu_{1} \in [0, 2k] \cap \Z, \; \mu_{i} \in [1, 2k] \cap \Z \;(2 \leq i \leq n), \\
            \ep_{i} \in \Z \; (1 \leq i \leq n), \ep_{i} \neq 0 \; (1 \leq i \leq n-1).
        \end{array}$
          & \multirow{2}{*}{Unique}\\\cline{1-3}
        \multirow{2}{*}{Prop. \ref{prop:geos 2k+1}, Def. \ref{rmk:alternative1}}& $(2a)$ & $\begin{array}{c}
            w = A_{1}x^{j}A_{2}x^{j}\dots A_{\tau_{1}}x^{j}A_{\tau_{1}+1}, \\
            k \leq j \leq 2k-1\\
            A_{s} \; \text{reduced words over} \; x^{\alpha},y^{\beta}, \\
            \alpha \in [-2k+j, j-1] \cap \mathbb{Z}_{\neq 0}, \beta \in \Z\\  
        \end{array}$  & \\
        \cline{2-4}
        & $(2b)$ & $\begin{array}{c}
            w = A_{1}x^{j}A_{2}x^{j}\dots A_{\tau_{1}}x^{j}A_{\tau_{1}+1}x^{-(2k-j+1)}A_{\tau_{1} + 2}x^{-(2k-j+1)}\dots x^{-(2k-j+1)}A_{\tau_{1} + \tau_{2} + 1},  \\
            A_{s} \text{ as in} \; (2a) \\
            \tau_{2} \geq 1
            \end{array}$ & Non-unique \\
            \hline
        \cref{prop:geos 2k+1} & $(3)$ & $\begin{array}{c}
        w = x^{\alpha}y^{\ep_{1}}x^{\nu_{1}}\dots x^{\nu_{n}} \\
        n \geq 0  \\
        \alpha, \nu_{n} \in [-k, k] \cap \Z, \; \nu_{i} \in [-k, k] \cap \Z_{\neq 0} \; (1 \leq i \leq n-1)  \\
        \ep_{i} \in \Z_{\neq 0} \; (1 \leq i \leq n) 
        \end{array}$  & Unique \\
        \hline

    \end{tabular}
    \end{adjustbox}
    \caption{Subcase of even dihedral Artin groups: $\mathrm{BS}(p,p)$ with $p=2k+1$}
    \label{tab:p=2k+1}
\end{table}\\
This together with Theorems \ref{thm:BS 22} and \ref{thm:BS(2k,2k)} give:
\begin{theorem}\label{thm:even dihe trans}
    The conjugacy growth series of an even dihedral Artin group is transcendental, with respect to the free product generating set.
\end{theorem}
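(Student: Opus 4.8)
The plan is to reduce the statement to the three regimes already analysed in this section: $p=2$, $p=2k>2$ even, and $p=2k+1$ odd. By \cref{defn: gen set free product}, every even dihedral Artin group is $G(2p)\cong\mathrm{BS}(p,p)$ with $p=\frac{m}{2}\geq 2$, and the free product generating set is $X=\{x,y\}$, so it suffices to prove transcendence of the conjugacy growth series of $\mathrm{BS}(p,p)$ for each $p\geq 2$.

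For $p=2$, i.e.\ $G(4)=\mathrm{BS}(2,2)$, transcendence is exactly \cref{thm:BS 22}. For $p=2k$ with $k\geq 2$, transcendence is \cref{thm:BS(2k,2k)}: there, \cref{prop: 3 bigger than 1,prop:j=k} show that Type~(2) geodesics with $j=k$ dominate the conjugacy growth, and counting split cyclic representatives of the Type~(2b) words together with cyclic representatives of the Type~(2a) words yields a conjugacy growth series that is the sum of a rational series and a series whose coefficients are $\sim\mu_k^{-n}/n$, hence transcendental by \cite[Thm.~D]{Flajolet1987}.

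It remains to treat $p=2k+1$, and here I would run the same argument using the classification in \cref{tab:p=2k+1} and \cref{cor:CDeven}. As in \cref{prop:j=k}, the growth rate of Type~(2) words is maximised at $j=k$ (exploiting the length-preserving symmetry between the $j=k$ and $j=k+1$ words), and at $j=k$ Type~(3) has the same $x$-exponent range and hence the same growth rate, with both dominating Type~(1). Counting conjugacy classes of Type~(2) modulo (split) cyclic permutation reduces, after replacing each $x^{-(k+1)}$ by $x^{-(2k+1)}x^{k}=\Delta^{-1}x^{k}$ and pushing the central terms to the right exactly as in the proof of \cref{prop:oddB}, to the cumulative growth (up to cyclic permutation) of words of the form
\[
w' = A_{1}x^{k}A_{2}x^{k}\cdots x^{k}A_{\tau_{1}+\tau_{2}+1}\Delta^{l},
\]
with the $A_i$ reduced words over $x^{\alpha},y^{\beta}$ with ranges as in \cref{rmk:alternative1}. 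By \cite[Lemma~3.2]{coornaert_asymptotic_2005} this has asymptotics $\sim\mu^{-n}/n$, and Type~(3) contributes the same $\sim\mu^{-n}/n$ asymptotics; hence the conjugacy growth series of $\mathrm{BS}(2k+1,2k+1)$ is the sum of a rational series with a series whose coefficients are $\sim\mu^{-n}/n$, which is transcendental by \cite[Thm.~D]{Flajolet1987}.

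Since every even $m\geq 4$ falls into exactly one of these three cases, the theorem follows. Beyond assembling the cases, the only genuine work is in the $p=2k+1$ case: verifying that the estimates parallel those for $p=2k$, specifically that Type~(2) and Type~(3) really do dominate Type~(1), and that the reduction of the split-cyclic-permutation count to cumulative word growth behaves as in \cref{prop:oddB}. Everything else is routine bookkeeping over the geodesic tables.
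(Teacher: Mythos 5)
Your proposal is correct and follows essentially the same route as the paper: the paper proves \cref{thm:even dihe trans} by combining \cref{thm:BS 22} ($p=2$), \cref{thm:BS(2k,2k)} ($p=2k>2$), and the sketched $p=2k+1$ argument in Section \ref{sec:even}, which proceeds exactly as you describe (maximising the Type (2) growth rate at $j=k$, rewriting $x^{-(k+1)}=x^{-(2k+1)}x^{k}$ and pushing the central term right to reduce to cumulative cyclic-representative growth as in \cref{prop:oddB}, then noting Types (2) and (3) dominate Type (1)). No substantive differences.
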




\subsection{Conjugacy growth asymptotics and growth rates}
\comm{
Recall that Artin groups of XXL-type are those where all edge labellings are $\geq 5$. Several recent results from the literature give the following.

\begin{prop}\label{prop:rank least 3}
The conjugacy growth of Artin groups of XXL-type of rank at least 3 is transcendental, with respect to the standard generating set.
\end{prop}

\begin{proof}
By \cite[Thm. 1.4]{Haettel2022}, every Artin group of XXL-type is the fundamental group of a compact locally CAT(0) space, with some element which acts as a rank 1 isometry. By the Cartan-Hadamard Theorem (\cite{Bridson}, II.4.1) any simply connected, locally CAT(0) metric space is globally CAT(0), so the fundamental group of a compact, locally CAT(0) space acts cocompactly on the universal cover, which is globally CAT(0). Therefore, every  Artin group of XXL-type acts on a globally CAT(0) space with a rank 1 isometry which is contracting \cite{Bestvina2009}. This implies that the conjugacy growth series is transcendental, by \cite[Thm. 1.7]{GekhtYang}. 
\end{proof}
We note this result holds with respect to the standard generating set, rather than the free product generating set. Therefore by \cref{main result} and \cref{prop:rank least 3}, we get the following.

\XXLtype}
A careful analysis (see Remark \ref{rmk:rates}, Theorem \ref{thm:BS 22}, proofs of Theorems \ref{thm:BS(2k,2k)}, \ref{thm:even dihe trans}) of the asymptotics obtained in the last sections, together with the explicit standard growth rates of dihedral Artin groups from \cite{Fujii2018} and \cite{EdjJohn92}, gives the following.

\asymptotics

We then immediately get that the standard and conjugacy growth rates are the same. The same result can be obtained via arguments involving languages (see Remark \ref{rmk:equal}).  
\begin{cor}\label{cor:growth_rates}
    The conjugacy growth rate and the standard growth rate in a dihedral Artin group $G(m)$ are the same with respect to the free product generating set. That is, $$\lim_{n \rightarrow \infty} \sqrt[n]{c_{G(m),\{x,y\}}(n)}=\lim_{n \rightarrow \infty} \sqrt[n]{s_{G(m),\{x,y\}}(n)}.$$
\end{cor}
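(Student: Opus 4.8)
The plan is to obtain this immediately from \cref{prop:asymptotics}, which carries all the real content: the analysis of Sections \ref{sec:odd conj geos}--\ref{growth} shows that $c_{G(m),\{x,y\}}(n)$ equals, up to multiplicative constants, either $\alpha^n$ or $\alpha^n/n$, where $\alpha$ is by definition the standard growth rate, that is, $\alpha = \lim_{n\to\infty}\sqrt[n]{s_{G(m),\{x,y\}}(n)}$. The corollary then follows because a subexponential factor does not change an exponential growth rate.

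In more detail, I would argue as follows. By \cref{prop:asymptotics} and \cref{def:eq} there are constants $0 < c_1 \leq c_2$ and a function $f(n) \in \{\alpha^n,\, \alpha^n/n\}$ with $c_1 f(n) \leq c_{G(m),\{x,y\}}(n) \leq c_2 f(n)$ for all $n \geq 1$; in particular $c_{G(m),\{x,y\}}(n) > 0$, so the $n$-th roots are well defined. Taking $n$-th roots and using $c_1^{1/n}, c_2^{1/n} \to 1$ and $n^{-1/n} = e^{-(\log n)/n} \to 1$, a squeeze argument shows $\lim_{n\to\infty}\sqrt[n]{c_{G(m),\{x,y\}}(n)} = \alpha$ in both cases (and, incidentally, that this limit exists). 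Combined with the definition of $\alpha$, this is exactly the claimed equality $\lim_{n\to\infty}\sqrt[n]{c_{G(m),\{x,y\}}(n)} = \lim_{n\to\infty}\sqrt[n]{s_{G(m),\{x,y\}}(n)}$.

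There is no real obstacle here: the difficulty was already absorbed into proving \cref{prop:asymptotics}. As indicated after the statement, an alternative derivation that bypasses the refined asymptotics goes via languages (see \cref{rmk:equal}): over $\{x,y\}$ the conjugacy geodesic language is regular by \cref{conjgeo result}, and comparing the (rational) growth series of this language with that of the language of all geodesics forces the two exponential growth rates to coincide with $\alpha$.
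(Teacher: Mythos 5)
Your proposal is correct and matches the paper's own derivation: the paper deduces \cref{cor:growth_rates} immediately from \cref{prop:asymptotics}, exactly as you do, with the $n$-th-root squeeze argument being the (routine) detail left implicit. Your closing remark about the language-theoretic alternative also mirrors the paper's pointer to \cref{rmk:equal}.
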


The standard growth rate has been studied intensively in a variety of groups, and the limit is known to always exist. For the conjugacy growth the limit doesn't always exist (\cite{Hull2013}), but the careful analysis of asymptotics shows that it does for dihedral Artin groups. 

\section{Conjugacy geodesics in dihedral Artin groups}\label{sec:conj geos language}
The aim of this section is to prove the following.
\conjgeos
Our strategy uses two ingredients, the first of which concerns the  \emph{permutation conjugacy length function}, defined by Antol\'{i}n and Sale \cite{Antolin2016}. If $u,v \in X^{\ast}$ represent conjugate elements in a group $G=\langle X\rangle$, then an element $w \in G$ is a \emph{permutation conjugacy conjugator} (PC-conjugator) for $u$ and $v$ if there exist cyclic permutations $u'$ and $v'$ of $u$ and $v$ respectively, such that $wu' =_{G} v'w$. Define
\[ \mathrm{PCL}_{G,X}(u,v) := \mathrm{min} \{l(w) \; | \; w \; \text{is a PC-conjugator for} \; u \; \text{and} \; v \}.
\]
The \emph{permutation conjugacy length function} of $G$ is defined as
\[ \mathrm{PCL}_{G,X} := \mathrm{max} \{\mathrm{PCL}_{G,X}(u,v) \; | \; u,v \; \text{geodesics satisfying} \; l(u)+l(v) \leq n \}.
\]
The following result links the PCL function to the language of conjugacy geodesics.

\begin{prop}\label{prop: PCL language}\cite[Prop. 2.3]{Antolin2016}\\
    Suppose that $\mathrm{PCL}_{G,X}$ is constant. If $(G,X)$ has the falsification by fellow traveler property, then $\mathsf{ConjGeo}(G,X)$ is regular.
\end{prop}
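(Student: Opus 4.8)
The plan is to prove this via the three ingredients suggested by the statement: (i) the hypothesis, which furnishes a constant $C$ with $\mathrm{PCL}_{G,X}(u,v)\le C$ for every pair of conjugate geodesic words $u,v$; (ii) the FFTP, which supplies a finite-state way to detect a \emph{bounded} decrease of length between a word and a shorter equivalent word, and which in particular already makes $\mathsf{Geo}(G,X)$ regular (essentially the Neumann--Shapiro argument, \cite{Neumann1995}); and (iii) the usual closure properties of regular languages, including the fact that the cyclic closure of a regular language is regular. Write $\pi\colon X^{*}\to G$ for the projection and $|g|$ for the length of $g\in G$.

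First I would record the following characterisation. A geodesic word $w$ is \emph{not} a conjugacy geodesic exactly when the class $[\pi(w)]$ has a conjugacy geodesic representative $v$ with $|v|<|w|$; since $w,v$ are conjugate geodesics, $\mathrm{PCL}_{G,X}(w,v)\le C$ yields cyclic permutations $w'$ of $w$ and $v'$ of $v$ and an element $p$ with $|p|\le C$ and $\pi(p)\pi(w')=\pi(v')\pi(p)$, so that, fixing a geodesic word $\bar p$ for $p$, one has $\pi(\bar p\,w'\,\bar p^{-1})=\pi(v')$ of length $\le|v'|=|v|<|w|=|w'|$. Conversely, if some cyclic permutation $w'$ of $w$ and some word $\bar p$ with $|\bar p|\le C$ satisfy $|\pi(\bar p\,w'\,\bar p^{-1})|<|w'|$, then $\pi(w)$ admits a conjugate of length $<|w|$. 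Hence
\[
\mathsf{ConjGeo}(G,X)=\mathsf{Geo}(G,X)\ \cap\ \bigcap_{\bar p\,:\,|\bar p|\le C}\Bigl(X^{*}\setminus\mathrm{Cyc}\bigl(M_{\bar p}\bigr)\Bigr),\quad\text{where}\quad M_{\bar p}:=\bigl\{\,w'\in X^{*}\ :\ |\pi(\bar p\,w'\,\bar p^{-1})|\le|w'|-1\,\bigr\},
\]
and $\mathrm{Cyc}(\cdot)$ denotes cyclic closure. As there are finitely many $\bar p$ with $|\bar p|\le C$ and $\mathsf{Geo}(G,X)$ is regular, it suffices to show each $M_{\bar p}$ is regular.

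This is the heart of the proof, and the step I expect to be the main obstacle. For fixed $\bar p$ the word $\bar p\,w'\,\bar p^{-1}$ has length $|w'|+2|\bar p|$, so $w'\in M_{\bar p}$ precisely when this word can be rewritten, as a representative of the same group element, to length $|w'|-1$, i.e.\ shortened by the \emph{bounded} amount $2|\bar p|+1\le 2C+1$. Iterating the one-step falsification guaranteed by FFTP at most $2C+1$ times produces a word $v$ with $|v|=|w'|-1$, $\pi(v)=\pi(\bar p\,w'\,\bar p^{-1})$, that fellow-travels $\bar p\,w'\,\bar p^{-1}$ with constant $(2C+1)K$, where $K$ is the FFTP constant of $(G,X)$; this radius is independent of $w'$ precisely because $C$ is a constant. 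One then constructs a non-deterministic automaton that reads $w'$, internally prepends $\bar p$ and appends $\bar p^{-1}$, guesses $v$ symbol by symbol while tracking the word difference $\pi\bigl(\bar p\,w'(t)\,\bar p^{-1}\bigr)^{-1}\pi(v(t))$ inside the finite ball of radius $(2C+1)K$ (padding the shorter word at its endpoint, which only requires the bounded counter $2|\bar p|+1$), and accepts when $v$ has terminated exactly $2|\bar p|+1$ symbols before $\bar p\,w'\,\bar p^{-1}$ does and the word difference is trivial. This automaton accepts exactly $M_{\bar p}$, which is therefore regular; taking cyclic closures, a finite union, a complement, and intersecting with $\mathsf{Geo}(G,X)$ finishes the proof.

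Beyond the construction of the word-difference automaton, the technical care is concentrated in two places. First, one must verify that the bounded-length-decrease phenomenon is exactly where constancy of $\mathrm{PCL}_{G,X}$ is used: if $|\bar p|$ were permitted to grow with $|w'|$, the fellow-traveling radius $(2|\bar p|+1)K$ would be unbounded and no fixed finite automaton would suffice. Second, a cyclic permutation of a geodesic word need not itself be geodesic, so the characterisation above correctly quantifies over \emph{all} words $w'$ that are cyclic permutations of $w$, with geodesity of $w$ entering only through the inclusion $\mathsf{ConjGeo}(G,X)\subseteq\mathsf{Geo}(G,X)$.
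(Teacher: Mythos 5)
The paper does not actually prove this proposition—it is quoted directly from \cite[Prop. 2.3]{Antolin2016}—so there is no in-paper argument to compare against; your reconstruction is correct and is essentially the argument of that reference: the constant bound $C$ on $\mathrm{PCL}_{G,X}$ reduces failure of conjugacy-minimality of a geodesic to a length drop of at least $2|\bar p|+1$ after a cyclic permutation and conjugation by one of the finitely many words $\bar p$ with $|\bar p|\le C$, and iterating the FFTP gives a uniformly bounded fellow-travelling constant so that each $M_{\bar p}$ is recognised by a word-difference automaton, after which regularity of $\mathsf{ConjGeo}(G,X)$ follows from closure of regular languages under cyclic closure, Boolean operations, and intersection with the regular language $\mathsf{Geo}(G,X)$. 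The one slip is the acceptance condition that $v$ terminate \emph{exactly} $2|\bar p|+1$ symbols before $\bar p\,w'\,\bar p^{-1}$: an FFTP step may shorten by more than one letter, so the witness $v$ may satisfy $|v|<|w'|-1$; you should accept when $v$ terminates \emph{at least} $2|\bar p|+1$ symbols earlier, which the saturating bounded counter you already describe handles without further change.
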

Using results from \cref{sec:odd conj geos} and \cref{even conj geos}, we show that dihedral Artin groups satisfy the two conditions of \cref{prop: PCL language}, with respect to the free product generating set. We first establish the constant bound on the PCL function.

\PCLconstant

\begin{proof}
    From \cref{sec:odd conj geos}, the language of conjugacy geodesics for odd dihedral Artin groups is precisely 
    \[ \mathsf{ConjGeo}(G,X) = \overline{\mathcal{A}} \sqcup \{ w \in \mathcal{B} \; | \; w \; \text{starts and ends with opposite letters} \}.
    \]
    By \cref{prop:prod equal}, if two words $u,v \in \overline{\mathcal{A}}$ represent conjugate elements, then they are equal up to a cyclic permutation, and so $\mathrm{PCL}_{G,X}(u,v) = 0$ in these cases. Now consider words of Type $\left(3^{0+}N\right)$. First note that for any word $w \in (3^{0+}N) \cap \mathsf{ConjGeo}(G,X)$, then $w =_{G} \overline{w} \in \overline{\mathcal{B_{+}}}$, by applying \cref{eq:a non unique} to relevant powers. Therefore for any $u,v \in (3^{0+}N)$,
    \begin{equation}\label{eq: sequence}
        u =_{G} \overline{u} \xleftrightarrow{s} \overline{v} =_{G} v,
    \end{equation}
    by \cref{prop:conj geo B}, where $\xleftrightarrow{s}$ represents a split cyclic permutation, and $\overline{u}, \overline{v} \in \overline{\mathcal{B_{+}}}$. In particular, if $u, v \in \overline{\mathcal{B_{+}}}$, then $\mathrm{PCL}_{G,X}(u,v) = 0$ by \cref{prop:conj geo B}. We claim that there exists a shorter sequence of the form $u \leftrightarrow w =_{G} v$, where $\leftrightarrow$ represents a cyclic permutation. To see this, let
    \[ u = A_{1}y^{a_{1}}A_{2}y^{a_{2}}\dots y^{a_{\tau}}A_{\tau+1}, \; v = y^{b_{1}}A_{t}y^{b_{2}}A_{t+1}\dots y^{b_{\tau}}A_{t-1},
    \]
    for some $1 \leq t \leq \tau + 1$, where the tuples $(y^{a_{1}}, \dots y^{a_{\tau}})$ and $(y^{b_{1}}, \dots y^{b_{\tau}})$ consist of $\tau_{1}$ $y^{-k}$ terms and $\tau_{2}$ $y^{k+1}$ terms respectively, in any order within each tuple. Note we can assume these tuples from $u$ and $v$ have the same number of $y^{-k}$ terms and $y^{k+1}$ terms by \cref{eq: sequence}. We can first cyclically permute $u$ to a word $w$ of the form
    \[ w = y^{a_{t-1}}A_{t}y^{a_{t}}A_{t+1}\dots y^{a_{t-2}}A_{t-1}.
    \]
    Then $w =_{G} v$ since the tuples $(y^{a_{t-1}}, \dots y^{a_{t-2}})$ and $(y^{b_{1}}, \dots, y^{b_{\tau}})$ are equivalent by repeated applications of \cref{eq:a non unique}. Hence $\mathrm{PCL}_{G,X}(u,v) = 0$. This method also holds for words of Type $(3^{0-}N)$ and $(3^{0*})$, which covers all words in $\mathsf{ConjGeo}(G,X)$.

    From \cref{even conj geos}, the language of conjugacy geodesics for even dihedral Artin groups is precisely
    \[ \mathsf{ConjGeo}(G,X) = \overline{\mathcal{C}} \sqcup \{ w \in \mathcal{D} \; | \; w \; \text{starts and ends with opposite letters} \}.
    \]
    Showing PCL is constant is analogous to the odd cases, using Propositions \ref{prop:conj geos cyc perm} and \ref{prop:sets B}.
\end{proof}
\begin{rmk}\label{rmk:equal}
    We can use \cite[Proposition 2.2]{Antolin2016} and \cref{thm:PCL constant} to obtain a different proof of \cref{cor:growth_rates}, which establishes the equality of the exponential growth rate and exponential conjugacy growth rate of any dihedral Artin groups with respect to the `free product' generators.
\end{rmk}
For a group $G = \langle X \rangle$ and $k \geq 0$, we say $w, w' \in X^{\ast}$ \emph{k-fellow travel} if for each $i \geq 0$, $|\mathrm{pre}_{i}(w)^{-1}\mathrm{pre}_{i}(w')| \leq k$. We say $G = \langle X \rangle$ satisfies the \emph{falsification by fellow traveler property} (FFTP) if, for some fixed constant $k$, any non-geodesic word $w \in X^{\ast}$ $k$-fellow travels with a shorter word. 

We now prove the second condition of \cref{prop: PCL language}.

\FFTP

\begin{proof}
We will show that the Cayley graph $\Gamma = \Gamma(G,X)$ satisfies the FFTP by working with labels of paths in $\Gamma$, which are words over $X$. Our strategy is to consider odd and even cases in turn, and create a series of reductions, all of which have bounded fellow traveler constant, such that any words $w \in X^{\ast}$, where we cannot apply any further reductions, is geodesic. 

First consider the odd case $G= \langle x,y\mid x^2=y^{2k+1}\rangle$. Our series of reductions is as follows: 
\begin{enumerate}
\item[(R0)] If $w\in X^*$ is not freely reduced, then it $2$-fellow travels with a shorter word that is obtained by performing a single free reduction to $w$.

\item[(R1)] If $w\in X^*$ has the form $w= w_1 x^{2\e} w_2 x^{- \e}w_3$  (resp. $w= w_1 x^{-\e} w_2 x^{2\e}w_3$), with $\e\in \{\pm 1\}$, then $w$ $4$-fellow travels with $w_1w_2x^{\e}w_3$ (resp. $w_1x^{-\e}w_2w_3$).

\item[(R2)] If $w\in X^*$ has the form $w= w_1 y^{\e} x^{2l} y^{-\e}w_3$, with $l \in \Z_{\neq 0}$ and $\e \in \{\pm 1\}$, then $w$ $2$-fellow travels with $w_1x^{2l}w_3$.

\item[(R3)] If $w\in X^*$ contains $y^b$ with $|b|\geq k$, then

\begin{itemize}
\item[(R3a)] If $|b|>k+1$, then $w$ $(2k+2)$-fellow travels with a shorter word, that is obtained by replacing $y^{k+2}$ by $x^2y^{-(k-1)}$ (resp. $y^{-k-2}$ by $x^{-2}y^{k-1}$).
\item[(R3b)] If $|b|=k+1$ and $w$ has the form $w= w_1x^{\e}w_2y^{-\e(k+1)}w_3$ (resp.  $w= w_1y^{-\e(k+1)}w_2x^{\e}w_3$), with $\e\in \{\pm 1\}$,  then $w$ $(2k+9)$-fellow travels with $w_1 x^{-\e} w_2 y^{\e k}w_3$ (resp. $w_1  y^{\e k}w_2x^{-\e}w_3$). To see this, notice that $w$ 4-fellow travels with $w_1x^{\e}x^{-2\e}x^{2\e}w_2y^{-\e (k+1)}w_3$,
 which then $2$-fellow travels with $w_1x^{-\e}w_2 x^{2\e}y^{-\e (k+1)}w_3$, which finally $(2k+3)$-fellow travels with $w_1x^{-\e}w_2 y^{\e k}w_3$.
 
 \item[(R3c)]  If $|b|=k$ and $w$ has the form $w_1x^{2\e}w_2y^{-k\e}w_3$  or $w_1y^{-k\e}w_2x^{2\e}w_3$, with $\e \in \{\pm 1\}$, then $w$ $4$-fellow travels with $w_1w_2x^{2\e}y^{-k\e}w_3$, which then $(2k+1)$-fellow travels with $w_1w_2 y^{\e (k+1)}w_3$ (see Figure \ref{fig:x2k}).

\begin{figure}[h]
    \centering
    \begin{tikzpicture}
    [decoration ={markings, mark=at position 0.5 with {\arrow{stealth}}}]
        \filldraw[black] (0,0) circle (1pt);
        \filldraw[black] (2,0) circle (1pt);
        \filldraw[black] (3,2) circle (1pt);
        \filldraw[black] (7,2) circle (1pt);
        \filldraw[black] (6,0) circle (1pt);
        \filldraw[black] (8,1) circle (1pt);
        \filldraw[black] (10,1) circle (1pt);
        \draw[postaction={decorate}, blue] (0,0) -- (2,0) ;
        \draw[postaction={decorate}, green] (2,0) -- (3,2);
        \draw[postaction={decorate}, blue] (3,2) -- (7,2);
        \draw[postaction={decorate}, blue] (2,0) -- (6,0);
        \draw[postaction={decorate}, green] (6,0) -- (7,2);
        \draw[postaction={decorate}, red] (6,0) -- (8,1);
        \draw[postaction={decorate}, red] (8,1) -- (7,2);
        \draw[postaction={decorate}, blue] (8,1) -- (10,1);
        \draw[dashed, green] (3,0) -- (4,2);
        \draw[dashed, green] (4,0) -- (5,2);
        \draw[dashed, green] (5,0) -- (6,2);
        \node at (1, 0.3) {$w_{1}$};
        \node at (2.1, 1) {$x^{2}$};
        \node at (6.1, 1) {$x^{2}$};
        \node at (4, -0.3) {$w_{2}$};
        \node at (5, 2.3) {$w_{2}$};
        \node at (7.7, 1.8) {$y^{k}$};
        \node at (9, 1.2) {$w_{3}$};
        \node at (7.1, 0.2) {$y^{k+1}$};
    \end{tikzpicture}
    \caption{Case (R3c)}
    \label{fig:x2k}
\end{figure}
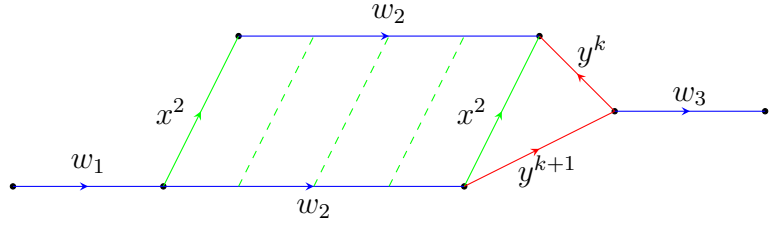
\end{itemize} 
\item[(R4)] If $w\in X^*$ has the form $w = w_1 y^{a}x^{2l} y^b w_2$ with $l \in \Z_{\neq 0}$, $a\cdot b >0$ and $|a+b|>k+1$, then $w$ $2a$-fellow travels with $w_1x^{2l}y^{a+b}w_2$, which by (R3) $(2k+2)$-fellow travels with a shorter word. By (R3) we can assume that $|a|\leq k+1$, otherwise we could have applied (R3) first. Hence $w$ $2(a+k+1)$-fellow travels with a shorter word, provided that (R3) cannot be applied (see Figure \ref{fig:ax2b}).

\begin{figure}[h]
    \centering
    \begin{tikzpicture}
    [decoration ={markings, mark=at position 0.5 with {\arrow{stealth}}}]
        \filldraw[black] (0,0) circle (1pt);
        \filldraw[black] (2,0) circle (1pt);
        \filldraw[black] (3,2) circle (1pt);
        \filldraw[black] (7,2) circle (1pt);
        \filldraw[black] (6,0) circle (1pt);
        \filldraw[black] (8,1) circle (1pt);
        \filldraw[black] (10,1) circle (1pt);
        \filldraw[black] (7,0) circle (1pt);
        \draw[postaction={decorate}, blue] (0,0) -- (2,0) ;
        \draw[postaction={decorate}, green] (2,0) -- (3,2);
        \draw[postaction={decorate}, blue] (3,2) -- (7,2);
        \draw[postaction={decorate}, blue] (2,0) -- (6,0);
        \draw[postaction={decorate}, green] (6,0) -- (7,2);
        \draw[postaction={decorate}, red] (7,0) -- (7.75,1.25);
        \draw[postaction={decorate}, blue] (6,0) -- (7,0);
        \draw[postaction={decorate}, red] (7,2) -- (8,1);
        \draw[postaction={decorate}, blue] (8,1) -- (10,1);
        \draw[dashed, green] (3,0) -- (4,2);
        \draw[dashed, green] (4,0) -- (5,2);
        \draw[dashed, green] (5,0) -- (6,2);
        \node at (1, 0.3) {$w_{1}$};
        \node at (2.1, 1) {$y^{a}$};
        \node at (6.2, 1) {$y^{a}$};
        \node at (4, -0.3) {$x^{2l}$};
        \node at (5, 2.3) {$x^{2l}$};
        \node at (7.7, 1.8) {$y^{b}$};
        \node at (9, 1.2) {$w_{2}$};
        \node at (7.8, 0.5) {$y^{k-1}$};
        \node at (6.4, -0.3) {$x^{2}$};
    \end{tikzpicture}
    \caption{Case (R4)}
    \label{fig:ax2b}
\end{figure}

%
%
%
\end{enumerate}
Let $w\in X^*$ be a word for which we cannot apply any reduction (R0)-(R4). Since $x^2$ is central, we can move any $x^2$ terms to the right to obtain a word $w'$ of the same length as $w$, representing the same element as $w$. Note that $w$ is geodesic if and only if $w'$ is geodesic, and we can apply a reduction from (R0)-(R4) to $w$ if and only if we can apply a reduction from (R0)-(R4) to $w'$.
Thus, without loss of generality we can assume $w$ has the form
$$w = w_0x^{2l}, \qquad w_0 = x^{\alpha_0} y^{\beta_1} x^{\alpha_1} \cdots y^{\beta_{n_{w}}}x^{\alpha_{n_{w}}},$$
where $\alpha_0,\alpha_{n_{w}}\in \{-1,0,1\}$, $\alpha_i\in \{\pm 1 \} \; (1 \leq i \leq n_{w}-1)$, and $\beta_j \in [-(k+1), k+1] \cap \Z_{\neq 0} \; (1 \leq j \leq n_{w})$.
We can assume $l$ has been chosen so that $\ell(w)=|2l|+\ell(w_0)$. Let $g \in X^{\ast}$ be a geodesic word representing the same element as $w$. Again we can assume $g$ has the form
$$g = g_0x^{2s}, \qquad g_0 = x^{\sigma_0} y^{\gamma_1} x^{\sigma_1} \cdots y^{\gamma_{n_{g}}}x^{\sigma_{n_{g}}},$$
with $\sigma_0,\sigma_{n_{g}} \in \{-1,0,1\}$, $\sigma_{i} \in \{\pm 1\} \; (1 \leq i \leq n_{g}-1)$, $\gamma_j \in [-(k+1), k+1] \cap \Z_{\neq 0} \; (1 \leq j \leq n_{g})$, and $\ell(g)=|2s|+\ell(g_0)$. 

Since $g$ and $w$ represent the same element of $G$, when viewed as elements of $\overline{G} \cong C_{2} \ast C_{2k+1} =\langle x,y \mid x^2= y^{2k+1}=1\rangle$, they represent the same element. Therefore $w_0$ and $g_0$ label paths with the same endpoints in $\overline{\Gamma}=\Gamma(\overline{G},X)$, and so $n_{w}=n_{g}$.  
Also, $\alpha_0=0$ if and only if $\sigma_0=0$, and similarly $\alpha_{n_{g}}=0$ if and only if $\sigma_{n_{g}}=0$. We now consider the $x$ and $y$ exponents of $w_{0}$ and $g_{0}$, and when these differ. We denote these by $\pmb{\beta}=(\beta_1,\dots, \beta_{n_{g}})$,  $\pmb{\gamma}=(\gamma_1,\dots, \gamma_{n_{g}})$, $\pmb{\alpha}= (\alpha_0,\dots, \alpha_{n_{g}})$ and $\pmb{\sigma}=(\sigma_0,\dots, \sigma_{n_{g}})$. Note that if $|\beta_i|<k$ then $\beta_i=\gamma_i$, since one cannot perform a reduction (R3) to $w_0$ or $g_0$, and $g_{0}$ is geodesic. Therefore if $\pmb{\beta}$ and $\pmb{\gamma}$ do not coincide at the $i$-th coordinate, then  $\beta_i=\pm k$ (resp. $=\mp (k+1)$) and $\gamma_i$ = $\pm (k+1)$  (resp. $=\mp k$). 

Let $c\geq 0$ be the number of different coordinates of  $\pmb{\beta}$ and $\pmb{\gamma}$. Let $i_1<\dots< i_c\in \{1,\dots, n_{g}\}$ such that $\beta_{i_j}\neq \gamma_{i_j}$, and suppose there exists $h,j$ such that $\beta_{i_h}\cdot \beta_{i_j}< 0$.
For simplicity, we denote $\beta_{i_h}$ by $\beta_h$ and $\beta_{i_j}$ by $\beta_j$. Similarly denote by $\gamma_h$ and $\gamma_j$ the corresponding coordinates of $\bm{\gamma}$. We cannot have $|\beta_h|=|\beta_j|=k+1$, as then we can apply (R3b) with some $x^{\e}$ of opposite sign. 
If we have $|\beta_h|=|\beta_j|=k$, then $|\gamma_h|=|\gamma_j|= k+1$ and $\gamma_h\cdot \gamma_j <0$, and then we can apply (R3b) to $g_0$, which is a contradiction since $g_0$ is geodesic. 
Therefore $|\beta_h|\neq |\beta_j|$, which implies that $\gamma_h=\beta_j$ and $\gamma_j=\beta_h$.
Then $w_0 = w_1 y^{\beta_h}w_2 y^{\beta_j}w_3$ $(2k+2)$-fellow travels with $w_0' = w_1 y^{\beta_j}w_2 y^{\beta_h}w_3$. For example, Figure \ref{fig:upanddown} describes the case where $(\beta_h, \beta_j)=(k, -(k+1))$.

\begin{figure}[h]
    \centering
    \begin{tikzpicture}
    [decoration ={markings, mark=at position 0.5 with {\arrow{stealth}}}]
        \filldraw[black] (0,0) circle (1pt);
        \filldraw[black] (2,0) circle (1pt);
        \filldraw[black] (3,1) circle (1pt);
        \filldraw[black] (3,-1) circle (1pt);
        \filldraw[black] (6,1) circle (1pt);
        \filldraw[black] (6,-1) circle (1pt);
        \filldraw[black] (7,0) circle (1pt);
        \filldraw[black] (9,0) circle (1pt);
        \draw[postaction={decorate}, blue] (0,0) -- (2,0);
        \draw[postaction={decorate}, red] (2,0) -- (3,1);
        \draw[postaction={decorate}, red]  (3,-1) --(2,0);
        \draw[postaction={decorate}, blue] (3,1) -- (6,1);
        \draw[postaction={decorate}, blue] (3,-1) -- (6,-1);
        \draw[postaction={decorate}, red]  (6,-1) -- (7,0);
        \draw[postaction={decorate}, red] (7,0) -- (6,1);
        \draw[postaction={decorate}, blue] (7,0) -- (9,0);
        \draw[dashed, red] (3.5,1) -- (3.5,-1);
        \draw[dashed, red] (4,1) -- (4,-1);
        \draw[dashed, red] (4.5,1) -- (4.5,-1);
        \draw[dashed, red] (5,1) -- (5,-1);
        \draw[dashed, red] (5.5,1) -- (5.5,-1);
        \node at (1, 0.2) {$w_{1}$};
        \node at (4.5, 1.2) {$w_{2}$};
        \node at (4.5, -1.3) {$w_{2}$};
        \node at (8,0.2) {$w_{3}$};
        \node at (2.3, 0.7) {$y^{k}$};
        \node at (2.1, -0.8) {$y^{k+1}$};
        \node at (7.1, 0.7) {$y^{k+1}$};
        \node at (6.7, -0.7) {$y^{k}$};
    \end{tikzpicture}
    \caption{Two paths of the same length that $(2k+2)$-fellow travel.}
    \label{fig:upanddown}
\end{figure}
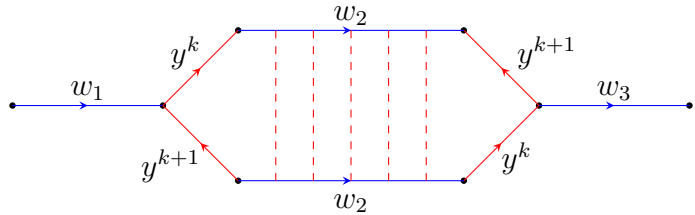
Notice that $\ell(w_0')= \ell(w_0)$ and $w_0=_G w_0'$, and so $w = w_0x^{2l}$ is geodesic if and only if $w' = w_0'x^{2l}$ is geodesic.
Let $\pmb{\beta}'$ be the vector of $y$ exponents of $w'$.
Then the number of disagreements between $\pmb{\beta}'$ and $\pmb{\gamma}$ is $c-2$. Note also that since we cannot apply (R0)-(R4) to $w$, we cannot apply them to $w'$.
Hence, by the previous argument, we can assume that one of the following holds:
\begin{multicols}{2}
    \begin{enumerate}
\item[(Y--)] For all $\beta_i\neq \gamma_i$, $\beta_i<0<\gamma_i$.
\item[(Y+)] For all $\beta_i\neq \gamma_i$, $\beta_i>0>\gamma_i$.
\end{enumerate}
\end{multicols}
Let $d\geq 0$ be the  number of different coordinates of  $\pmb{\alpha}$ and $\pmb{\sigma}$. Similar to the argument above, if there are $h\neq j$ such that $\alpha_{i_j}\cdot \alpha_{i_h}<0$ and $\alpha_{i_j}\neq \sigma_{i_j}$ and $\alpha_{i_h}\neq \sigma_{i_h}$, then we can change the signs of $\alpha_{i_j}$ and $\alpha_{i_h}$, and obtain a word of the same length as $w$, which represents the same element as $w$ and (R0)--(R4) are also satisfied. Hence $d$ has decreased by $2$, and so we can assume that the following condition holds.
\begin{enumerate}
\item[(X+)] For all $1 \leq i \leq m$ such that $\alpha_i\neq \sigma_i$ we have that $\alpha_i=1$ and $\sigma_i=-1$.
\end{enumerate}
Indeed,  the other option is that for all $i$ such that $\alpha_i\neq \sigma_i$, we have that $\alpha_i=-1$ and $\sigma_i=1$. However we can assume (X+) by changing $w$ and $g$ by $w^{-1}$ and $g^{-1}$ if necessary. Note that when assuming (X+), we have that $l\geq 0$ and $s\leq 0$ by (R1). If (Y+) holds then $g_0^{-1}w_{0}=_G x^{2(d+c)}$. Since $w=_G g$, this implies that $g_0x^{2(l+d+c)}=_{G} g_0x^{2s}$. Since $s\leq 0$ and $l,d,c\geq 0$, we must have that $s=l=d=c=0$. Hence $w = w_0 = g_0 = g$ and $w$ is geodesic. 

If (Y--) holds, then $g^{-1}_{0}w_0=_G x^{2(d-c)}$ and similar to before we have that $l+d-c = s$. If $d = c= 0$,  then $w_0 = g_0$ and $s=l=0$, therefore $w = g$ and $w$ is geodesic. If $c\neq 0$, then $l=0=s$. Indeed, otherwise if $l>0$, then we can apply (R3c) to $w$, and similarly if $s<0$, then we can apply (R3c) to $g$. As no reduction on $w$ and $g$ is possible, then $l=s = 0$. Moreover, $d=c$ since $l+d-c=s$. As $d>0$, then $w_0$ must contain a positive $x$-term, and then by (R3c) all the $\beta_i\neq \gamma_i$ must be equal to $-k$. But this means that all $\gamma_i\neq \beta_i$ must be equal to $k+1$ and therefore $\ell(w)=\ell(w_0)=\ell(g_0)-c <\ell(g)$, which is a contradiction. Hence $c=0$. If $d\neq 0$, since $0\leq s = l+d-c$, then $c>0$. However this can't hold by the previous case, and so $d=0$. This completes the proof for odd dihedral Artin groups.

For even cases, recall $G = \langle x, y \; | \; y^{-1}x^{p}y  = x^{p} \rangle$. We give details of the proof when $p = 2k$, and the case for $p=2k+1$ follows similarly. Again we define our series of reductions, all of which have bounded fellow traveler constant.
\begin{enumerate}
\item[(R0)] If $w\in X^*$ is not freely reduced then it $2$-fellow travels with a shorter word that is obtained by performing a single free reduction to $w$.

\item[(R1)] If $w\in X^*$ has the form $w= w_1 x^{2k\e} w_2 x^{-\e l}w_3$  (resp. $w= w_1 x^{-\e l} w_2 x^{2k\e}w_3$), with $\e\in \{\pm 1\}$ and $1 \leq l \leq 2k-1$, then it $(4k)$-fellow travels with $w_1w_2x^{\e(2k-l)}w_3$ (resp.  $w_1x^{\e(2k-l)}w_2w_3$).

\item[(R2)] If $w\in X^*$ has the form $w= w_1 y^{\e} x^{2kl} y^{-\e}w_3$, with $l\neq 0$ and $\e \in \{\pm 1\}$, then it $2$-fellow travels with $w_1x^{2kl}w_3$.

\item[(R3)] If $w \in X^{\ast}$ has the form $w = w_{1}x^{\sigma}w_{2}x^{\delta}w_{3}$ (resp. $w = w_{1}x^{\delta}w_{2}x^{\sigma}w_{3}$), where $\sigma > 0 > \delta$ and $\sigma + |\delta | > 2k$, then it $8k$-fellow travels with $w_{1}x^{\sigma-2k}w_{2}x^{\delta+2k}w_{3}$ (resp. $w = w_{1}x^{\delta+2k}w_{2}x^{\sigma-2k}w_{3}$).

\item[(R4)] If $w\in X^*$ has the form $w = w_1 y^{a}x^{2kl} y^b w_2$ with $l\neq 0$, $a\cdot b >0$, then it $2a$-fellow travels with $w_1x^{2kl}y^{a+b}w_2$.
\end{enumerate}
Let $w\in X^*$ be a word for which we can not apply any reduction (R0)-(R4). Since $x^{2k}$ is central, we can move all $x^{2k}$ terms to the right to obtain a word $w'$ of the same length as $w$, representing the same element as $w$. Note that $w$ is geodesic if and only if $w'$ is geodesic, and we cannot apply a reduction from (R0)-(R4) to $w$ if and only if we cannot apply a reduction from (R0)-(R4) to $w'$.
Thus, without loss of generality we can assume $w$ has the form
$$w = w_0x^{2kl}, \qquad w_0 = x^{\alpha_0} y^{\beta_1} x^{\alpha_1} \cdots y^{\beta_{n_{w}}}x^{\alpha_{n_{w}}},$$
with $\alpha_0,\alpha_{n_{w}} \in [-(2k-1), 2k-1] \cap \mathbb{Z}, \; \alpha_{i} \in [-(2k-1), 2k-1] \cap \mathbb{Z}_{\neq 0} \; (1 \leq i \leq n_{w}-1)$, $\beta_{j} \in \mathbb{Z}_{\neq 0} \; (1 \leq j \leq n_{w})$, and $\ell(w)=|2kl|+\ell(w_0)$. Let $g \in X^{\ast}$ be a geodesic word representing the same element as $w$. Similarly we can assume $g$ has the form
$$g = g_0x^{2ks}, \qquad g_0 = x^{\sigma_0} y^{\gamma_1} x^{\sigma_1} \cdots y^{\gamma_{n_{g}}}x^{\sigma_{n_{g}}},$$
with $\sigma_0, \sigma_{n_{g}}\in [-(2k-1), 2k-1] \cap \mathbb{Z}$, $\sigma_{i} \in [-(2k-1), 2k-1] \cap \mathbb{Z}_{\neq 0} \; (1 \leq i \leq n_{g}-1)$, $\gamma_{j} \in \mathbb{Z}_{\neq 0} \; (1 \leq j \leq n_{g})$, and $\ell(g)=|2ks|+\ell(g_0)$. 

Since $g$ and $w$ represent the same element of $G$, when viewed as elements of $\overline{G} \cong C_{2k} \ast \mathbb{Z} =\langle x,y \mid x^{2k} =1\rangle$, they represent the same element. Therefore $w_0$ and $g_0$ label paths with the same endpoints in $\overline{\Gamma}=\Gamma(\overline{G},X)$, and so $n_{w}=n_{g}$.  
Also, $\alpha_0=0$ if and only if $\sigma_0=0$ and similarly $\alpha_{n_{g}}=0$ if and only if $\sigma_{n_{g}}=0$. Finally, note that $\beta_{i} = \gamma_{i}$ for all $1 \leq i \leq m$, and so we consider the vectors $\pmb{\alpha}= (\alpha_0,\dots, \alpha_{n_{g}})$ and $\pmb{\sigma}=(\sigma_0,\dots, \sigma_{n_{g}})$ of $x$ exponents of $w_{0}$ and $g_{0}$.

Let $d\geq 0$ be the  number of different coordinates of  $\pmb{\alpha}$ and $\pmb{\sigma}$. Let $i_1<\dots< i_d\in \{1,\dots, n_{g}\}$ such that $\alpha_{i_j}\neq \sigma_{i_j}$, and suppose there exists $h,j$ such that $\alpha_{i_h}\cdot \alpha_{i_j}< 0$.
For simplicity, let us denote $\alpha_{i_h}$ by $\alpha_h$ and $\alpha_{i_j}$ by $\alpha_j$. Similarly denote by $\sigma_h$ and $\sigma_j$ the corresponding coordinates of $\pmb{\sigma}$. Without loss of generality let $\alpha_{h} > 0 > \alpha_{j}$ and $\sigma_{h} < 0 < \sigma_{j}$, and note that $\alpha_{h} = 2k + \sigma_{h}$ and $\alpha_{j} = -2k + \sigma_{j}$. Since $g_{0}$ is geodesic, $\sigma_{j} + |\sigma_{h}| = \sigma_{j}-\sigma_{h} \leq 2k$, since otherwise we could apply (R3). We then have
\[ \alpha_{h} + |\alpha_{j}| = 2k + \sigma_{h} + |-2k+ \sigma_{j}| = 2k + \sigma_{h} + 2k - \sigma_{j} = 4k - \sigma_{j} + \sigma_{h}
     \geq 4k - 2k = 2k.
\]
Since we cannot apply (R3) to $w_{0}$, we require $\alpha_{h} + |\alpha_{j}| \leq 2k$, and so $\alpha_{h} + |\alpha_{j}| = 2k = \sigma_{j} + |\sigma_{h}|$. This implies that $\alpha_{j} = \sigma_{h}$ and $\alpha_{h} = \sigma_{j}$. Then $w_0 = w_1 x^{\alpha_h}w_2 x^{\alpha_j}w_3$ $(4k-2)$-fellow travel with $w_0' = w_1 x^{\alpha_j}w_2 x^{\alpha_h}w_3$. Notice that $\ell(w_0')= \ell(w_0)$ and $w_0=_G w_0'$, so $w = w_0x^{2l}$ is geodesic if and only if $w' = w_0'x^{2l}$ is geodesic.
Let $\pmb{\alpha}'$ be the vector of $x$ exponents of $w'$.
Then the number of disagreements between $\pmb{\alpha}'$ and $\pmb{\sigma}$ is $d-2$. Note also that since we cannot apply (R0)-(R4) to $w$, we cannot apply (R0)-(R4) to $w'$.
Hence by the previous argument, we can assume that one of the following holds:
\begin{multicols}{2}
    \begin{enumerate}
\item[(X--)] For all $\alpha_i\neq \sigma_i$, $\alpha_i<0< \sigma_i$.
\item[(X+)] For all $\alpha_i\neq \sigma_i$, $\alpha_i>0> \sigma_i$.
\end{enumerate}
\end{multicols}
Note that when assuming (X--), we have that $l \leq 0$ and $s \geq 0$  by (R1). Similarly when assuming (X+), we have that $l\geq 0$ and $s\leq 0$.

If (X+) holds then $g_0^{-1}w_0 =_G x^{2kd}$. As $w=_G g$, this implies that $g_0x^{2k(l+d)}=_{G} g_0x^{2ks}$. Since $s\leq 0$ and $l,d \geq 0$, we must have that $s=l=d=0$. If (X--) holds, then $g^{-1}_0w_0=_G x^{-2kd}$ and so $l-d = s$. If $d \neq 0$, then $l-s = d > 0$. However $l-s \leq 0$, since $l \leq 0$ and $s \geq 0$, and so $s=l=d=0$. In both cases, $w = w_0 = g_0 = g$ and $w$ is geodesic. This completed the proof for $p=2k$, and the case for $p=2k+1$ follows a similar method. 
\end{proof}
The proof of \cref{conjgeo result} follows from \cref{prop: PCL language}, \cref{thm:PCL constant} and \cref{prop:FFTP}.

\begin{appendices}
\section{Geodesic normal forms}\label{sec:appendix}
Here we provide details on results from \cref{sec:odd conj geos} and \cref{even conj geos} about geodesic normal forms.

\subsection{Odd dihedral Artin groups}\label{append1:odd}
\begin{lemma}\label{lem:geo conditions}\cite[Prop. 3.5]{Fujii2018}
Let $w \in (3)$ be geodesic in $G(m) \cong \langle x,y \; | \; x^{2} = y^{m} \rangle$, $m=2k+1$. Then the following conditions must all be satisfied:
\begin{multicols}{2}
    \begin{enumerate}
    \item $\Pos_{x}(w) + \Neg_{x}(w) \leq 2$
    \item $\Pos_{y}(w) + \Neg_{y}(w) \leq 2k+1$
    \item $\Pos_{x}(w) + \Neg_{y}(w) \leq k+1$
    \item $\Pos_{y}(w) + \Neg_{x}(w) \leq k+1$
\end{enumerate}
\end{multicols}
\end{lemma}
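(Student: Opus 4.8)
\textbf{Proof plan for \cref{lem:geo conditions} (the final statement, \cite[Prop. 3.5]{Fujii2018}).}

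The plan is to prove each of the four inequalities by a contrapositive length-reduction argument: if one of them fails, then $w$ admits a shorter word representing the same element, contradicting geodesity. First I would recall the basic relations available in $G(m) \cong \langle x,y \mid x^2 = y^m\rangle$, namely $x^2 = y^m$ and the fact that $\Delta = x^2 = y^m$ is central. These give the rewriting moves $y^{m} \leftrightarrow x^2$ (length $m$ replaced by length $2$, so a strict reduction since $m = 2k+1 \geq 3$), $x^2 \leftrightarrow y^m$, and — combining the two — $x \cdot y^a \leftrightarrow y^{a-m} \cdot x$ up to a central factor, i.e.\ $x y^a = y^{a}x = \Delta^{\pm1}\cdots$; more precisely $x^{-1} y^a x =_G y^a$ and $y^{-m}x^2 =_G 1$, which let us trade a unit of $x$-exponent against $m$ units of $y$-exponent at the cost of shifting the central part.

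Next I would handle the inequalities in turn. For (1), $\Pos_x(w) + \Neg_x(w) \leq 2$: if some $x^{a_i}$ and $x^{a_j}$ have $a_i \geq 2$ or $a_i > 0 > a_j$ with $a_i - a_j \geq 3$, we can pull out an $x^2 = \Delta$ (central), move it to one side, and the remaining word is strictly shorter — this is the same mechanism as RR2 in the even case. For (2), $\Pos_y(w) + \Neg_y(w) \leq m$: if there are $y^{b_i}$ and $y^{b_j}$ with $b_i > 0 > b_j$ and $b_i - b_j \geq m+1$ (or a single $b_i$ with $|b_i| \geq m+1$), then since $y^m$ is central we can extract $y^{\pm m}$, convert it to $x^{\pm2}$, and move it out, again shortening the word; the boundary case $b_i - b_j = m$ is exactly the non-uniqueness of \cref{lem:non unique}(2) and does not contradict geodesity, which is why the bound is $\leq m$ and not $< m$. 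For (3) and (4), the mixed inequalities $\Pos_x(w) + \Neg_y(w) \leq k+1$ and $\Pos_y(w) + \Neg_x(w) \leq k+1$: here I would use the relation connecting $x$ and $y$ across a syllable boundary. Roughly, a subword of the form $x \cdot u \cdot y^{-b}$ (with $u$ a block with no extreme syllables) where $1 + b \geq k+2$, i.e.\ $b \geq k+1$, can be rewritten: write $x = \Delta x^{-1} = y^m x^{-1}$ (using centrality of $\Delta$ and that $\Delta = y^m$), producing $y^m x^{-1} u y^{-b}$, and since $x^{-1}$ commutes with the $y$-powers in $u$ up to central elements, the $y^{m}$ can cancel against $y^{-b}$ partially, reducing $|m| + |{-b}| = m + b$ to roughly $|m - b| + 2$ (the $2$ for the surviving $x^{-1}$), which is shorter exactly when $b \geq k+1$ since $m = 2k+1$. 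The symmetric argument, swapping the roles of $x,y$ and using $y^m = x^2$, gives (4).

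The main obstacle I anticipate is bookkeeping the \emph{central part} carefully: each rewrite move that trades $x$-exponent for $y$-exponent (or vice versa) introduces or absorbs a factor of $\Delta^{\pm1}$, and one must check that after collecting all these central factors the net word length genuinely decreases rather than merely shifting length between the ``body'' $x^{a_1}y^{b_1}\cdots$ and the Garside part $\Delta^c$. Concretely, replacing $y^m$ by $x^2$ saves $m-2 = 2k-1 > 0$ letters, but one has to ensure the move is actually applicable, i.e.\ that the relevant exponent is large enough to expose a full $y^m$ or $x^2$, and that no subsequent free cancellation or re-collection undoes the gain. This is precisely the kind of careful case analysis done in \cite{Fujii2018}, and in the write-up I would either cite \cite[Prop. 3.5]{Fujii2018} directly or reproduce the four cases with the length count made explicit; the cleanest packaging is to observe that each of the four failure scenarios matches one of the rewrite rules (an analogue of RR1--RR3 from the even case) that is length-non-increasing, with strict decrease exactly in the regime where the stated inequality is violated.
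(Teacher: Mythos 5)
The paper does not prove this lemma at all: it is imported verbatim as \cite[Prop.~3.5]{Fujii2018}, so there is no internal proof to compare your plan against. Your reconstruction is nonetheless essentially the right mechanism, and it is the one Fujii uses: each inequality is a non-geodesity criterion obtained by trading exponent between two syllables through the central element ($\Delta=x^2=y^m$ lets you replace a pair $y^{b_i},y^{b_j}$ of opposite signs by $y^{b_i-m},y^{b_j+m}$, changing the contribution from $b_i+|b_j|$ to $2m-(b_i+|b_j|)$, which is a strict shortening exactly when $\Pos_y+\Neg_y>m$; the mixed conditions (3),(4) come from $x=x^{-1}\Delta=x^{-1}y^m$, which shortens precisely when the opposing $y$-exponent is at least $k+1$). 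Two spots in your write-up are loose. First, the sentence ``since $x^{-1}$ commutes with the $y$-powers in $u$ up to central elements'' is false as stated and also unnecessary: $x^{-1}$ does not commute with $y$; what you actually move past $u$ is the central factor $y^m=\Delta$, and that alone carries the argument. Second, for condition (1) your claim that pulling a central $x^2$ out of a single large positive power $x^{a_i}$ ($a_i\geq 2$) strictly shortens the word is wrong --- $x^{a_i}=x^{a_i-2}\Delta$ has the same length, and the gain only materialises when the extracted $\Delta$ cancels against an $x$-power of opposite sign; this does not damage the lemma, since for $w\in(3)$ the constraint $-1\leq a_i\leq 1$ from \cref{prop:geos} makes condition (1) automatic, but the reduction as you phrased it is not valid in general. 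With those two repairs, and the bookkeeping of the central factors made explicit as you propose, the plan yields a complete proof.
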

Suppose $w \in (3^{0})$ is geodesic. If $w$ contains an $x$ term, then $w$ cannot contain a $y^{-(k+1)}$ term by \cref{lem:geo conditions}, since otherwise
\[ \Pos_{x}(w) + \Neg_{y}(w) = 1 + k+1 > k+1.
\]
Similarly, if $w$ contains an $x^{-1}$ term, then $w$ cannot contain a $y^{k+1}$ term. Moreover, no element of Type $\left(3^{0}\right)$ is also of Type $(1)$ or $(2)$, since the conditions in \cref{prop:geos} are mutually exclusive. Therefore we can split geodesics of Type $\left(3^{0}\right)$ into disjoint cases as in \cref{30 cases}.
\comm{
We now summarise our geodesic normal forms. Any element can be represented by a geodesic word $$w = x^{a_{1}}y^{b_{1}}\dots y^{b_{\tau}}\Delta^{c},$$ where $\Delta=x^2$, of the following types.

\begin{table}[!h]
\begin{adjustbox}{width=1\textwidth}
    \begin{tabular}{|c|c|c|c|}
        \hline
        Reference & Type & Conditions & Unique/Non-unique \\
        \hline \hline
        \multirow{5}{*}{\cref{prop:geos} }& $(1)$ & $\begin{array}{cc}
        c>0, &\\
        0 \leq a_{i} \leq 1 \; (1 \leq i \leq \tau), & a_{i} \neq 0 \; (2 \leq i \leq \tau),\\
        -\left(\frac{m-3}{2}\right) \leq b_{i} \leq \frac{m+1}{2} \; (1 \leq i \leq \tau), & b_{i} \neq 0 \; (1 \leq i \leq \tau - 1).
        \end{array}$ & \multirow{7}{*}{Unique}\\ \cline{2-3}
        & $(2)$ & $\begin{array}{cc}
            c<0, & \\
            -1 \leq a_{i} \leq 0 \; (1 \leq i \leq \tau), & a_{i} \neq 0 \; (2 \leq i \leq \tau), \\
            -\left(\frac{m+1}{2}\right) \leq b_{i} \leq \frac{m-3}{2} \; (1 \leq i \leq \tau), & b_{i} \neq 0 \; (1 \leq i \leq \tau - 1).
        \end{array}$ & \\\cline{2-3}
        & $(3^{+})$ & $\begin{array}{cc}
        c=0, & \\
        0 \leq a_{i} \leq 1 \; (1 \leq i \leq \tau), & a_{i} \neq 0 \; (2 \leq i \leq \tau),\\
        -\left(\frac{m-3}{2}\right) \leq b_{i} \leq \frac{m+1}{2} \; (1 \leq i \leq \tau), & b_{i} \neq 0 \; (1 \leq i \leq \tau - 1).
    \end{array}$ & \\\cline{2-3}
        & $(3^{-})$ & $\begin{array}{cc}
        c=0, & \\
        -1 \leq a_{i} \leq 0 \; (1 \leq i \leq \tau), & a_{i} \neq 0 \; (2 \leq i \leq \tau), \\
            -\left(\frac{m+1}{2}\right) \leq b_{i} \leq \frac{m-3}{2} \; (1 \leq i \leq \tau), & b_{i} \neq 0 \; (1 \leq i \leq \tau - 1).
    \end{array}$ & \\\cline{2-3}
         & $(3^{+}\cap 3^{-})$ & $w = y^{b}$ where $-\left(\frac{m-3}{2}\right) \leq b \leq \frac{m-3}{2}$.  & \\\cline{1-3}
        \cref{rmk:alternative odd a} & $(3^{0+}U)$ & $\begin{array}{cc}
        c = 0 & \\
        0 \leq a_{i} \leq 1 \; (1 \leq i \leq \tau), & a_{i} \neq 0 \; (2 \leq i \leq \tau),\\ 
        -\left(\frac{m-1}{2}\right) \leq b_{i} \leq \frac{m-1}{2} \; (1 \leq i \leq \tau), & b_{i} \neq 0 \; (1 \leq i \leq \tau -1), \\ \text{There exists at least one} \; y^{-\left(\frac{m-1}{2}\right)} \; \text{term}. & \\
    \end{array}$&  \\\cline{1-3}
        \cref{rmk:alternative odd b} & $(3^{0-}U)$ & $\begin{array}{cc}
        c = 0 & \\
        -1 \leq a_{i} \leq 0 \; (1 \leq i \leq \tau), & a_{i} \neq 0 \; (2 \leq i \leq \tau),\\ 
        -\left(\frac{m-1}{2}\right) \leq b_{i} \leq \frac{m-1}{2} \; (1 \leq i \leq \tau), & b_{i} \neq 0 \; (1 \leq i \leq \tau -1), \\ \text{There exists at least one} \; y^{\frac{m-1}{2}} \; \text{term}. & \\
    \end{array}$ & \\
        \hline
        \cref{rmk:alternative odd a} & $(3^{0+}N)$ & $\begin{cases}
        c = 0 & \\
        0 \leq a_{i} \leq 1 \; (1 \leq i \leq \tau), & a_{i} \neq 0 \; (2 \leq i \leq \tau),\\ 
        -\left(\frac{m-1}{2}\right) \leq b_{i} \leq \frac{m+1}{2} \; (1 \leq i \leq \tau), & b_{i} \neq 0 \; (1 \leq i \leq \tau -1), \\ 
        \text{There exists both} \; y^{-\left(\frac{m-1}{2}\right)} \; \text{and} \; y^{\frac{m+1}{2}} \; \text{terms}. & \\
    \end{cases}$ & \multirow{3}{*}{Non-unique} \\\cline{1-3}
        \cref{rmk:alternative odd b} & $(3^{0-}N)$ & $\begin{cases}
        c = 0 & \\
        -1 \leq a_{i} \leq 0 \; (1 \leq i \leq \tau), & a_{i} \neq 0 \; (2 \leq i \leq \tau),\\ 
        -\left(\frac{m+1}{2}\right) \leq b_{i} \leq \frac{m-1}{2} \; (1 \leq i \leq \tau), & b_{i} \neq 0 \; (1 \leq i \leq \tau -1), \\ \text{There exists both} \; y^{\frac{m-1}{2}} \; \text{and} \; y^{-\left(\frac{m+1}{2}\right)} \; \text{term}. &
    \end{cases}$ & \\\cline{1-3}
        \cref{defn:30*} & $(3^{0*})$ & $\begin{cases}
        c = 0 & \\
        -1 \leq a_{i} \leq 1 \; (1 \leq i \leq \tau), & a_{i} \neq 0 \; (2 \leq i \leq \tau), \\ 
        -\left(\frac{m-1}{2}\right) \leq b_{i} \leq \frac{m-1}{2} \; (1 \leq i \leq \tau), & b_{i} \neq 0 \; (1 \leq i \leq \tau -1), \\
        \text{There exist both} \; x, x^{-1} \; \text{terms.} &
    \end{cases}$ &   \\
    \hline
    \end{tabular}
    \end{adjustbox}
    \caption{Odd cases}
    \label{tab:my_label}
\end{table}
}
\subsection{Even dihedral Artin groups}\label{appen2: even}
\begin{proof}[Proof of \cref{prop:geos 2k}]
    Observe that each word listed in \cref{prop:geos 2k} is reduced in the sense of HNN-extensions (see \cite[Page 181]{Lyndon2001}). To show each type is mutually exclusive, we have six cases to check:
     $(i) \; g,h \in (1)$,
        $(ii) \; g,h \in (2)$,
        $(iii) \; g,h \in (3)$,
        $(iv) \; g \in (1), h \in (2)$,
        $(v) \; g \in (1), h \in (3)$, and
        $(vi) \; g \in (2), h \in (3)$. Since the proof method is similar for each case, we give details for Cases $(i)$ and $(iv)$ only. 
        
    Suppose $g, h \in (1)$ and
    $ g = x^{\mu_{1}}y^{\ep_{1}}\dots x^{\mu_{n}}y^{\ep_{n}}\Delta^{c} =_{G} x^{\alpha_{1}}y^{\beta_{1}}\dots x^{\alpha_{m}}y^{\beta_{m}}\Delta^{d} = h.
    $
    Without loss of generality, $c, d \geq 1$. Then $gh^{-1}$ must HNN-reduce to the trivial word, and so $\alpha_{1}-\mu_{1} = 0 \pmod{2k}$. Since $\alpha_{1}, \mu_{1} \in \{0, 1, \dots 2k-1\}$, this forces $\alpha_{1} = \mu_{1} = 0$, and $\ep_{1} = \beta_{1}$. Continued reduction must occur, and so we have $n=m, \alpha_{i} = \mu_{i}$ and $\ep_{i} = \beta_{i}$ for all $1 \leq i \leq n$. Therefore $gh^{-1}$ reduces to $\Delta^{d-c} = x^{2k(d-c)}$. Since $G(m)$ is torsion-free, $c=d$, which proves $(i)$.  

    Now suppose $g \in (1), h \in (2)$ and 
    \[ g = x^{\mu_{1}}y^{\ep_{1}}\dots x^{\mu_{n}}y^{\ep_{n}}\Delta^{c} =_{G} (x^{\mu_{1j}}y^{\ep_{1j}}\dots y^{\ep_{n_{j}j}})x^{j}(y^{\delta_{1j}}x^{\gamma_{1j}}\dots x^{\gamma_{m_{j}j}}) = h.
    \]
    Without loss of generality, $c \geq 1$. For notation consider $g^{-1}h = 1$. This forces $\mu_{1j}-\mu_{1} = 0 \pmod{2k}$. If $\mu_{1j} \geq 0$, then $\mu_{1j} = \mu_{1}$. Otherwise, we have $\mu_{1j} - \mu_{1} = -2k$, which gives $x^{-2k} = \Delta^{-1}$. Also $\ep_{1} = \ep_{1j}$. This pattern of either free cancellation or $x^{-2k}$ powers occurs for all pairs, and after continued reduction we are left with $x^{-2k(c + d)} = 1$, where $d$ is the number of $x^{-2k}$ terms which occur from $g^{-1}h$. Since $G(m)$ is torsion-free and $k \geq 2$, then $c = -d$. This is a contradiction since $c \geq 1$, and so $g \neq h$. 
\end{proof}
\begin{rmk}
    Since we use different notation to \cite{EdjJohn92}, we clarify which of their geodesic forms is equivalent to each Type from \cref{prop:geos 2k}. Case $(2)$ from \cite{EdjJohn92} is equivalent to Type 1, and Case $(5)$ from \cite{EdjJohn92} is equivalent to Type 3. Due to a small error, Case $(4)$ is reduced to Case $(3)$ in \cite{EdjJohn92} (see \cref{sec: error info}), which is then equivalent to Type 2. Finally we have absorbed all words from Case (1) of \cite{EdjJohn92} within each of the three types from \cref{prop:geos 2k}.
\end{rmk}
\begin{proof}[Proof of \cref{prop:even unique non unique}]
The relation of $G(m)$ implies that RR3 is the only rewrite rule that we can apply to transform a geodesic into another geodesic.
    We check whether RR3 applies to each set of geodesics.
    \par 
    $\mathcal{C}_{1}$: consider $(x^{\mu_{1}}y^{\ep_{1}}\dots x^{\mu_{n}}y^{\ep_{n}}\Delta^{c})^{\pm 1} \in \mathcal{C}_{1}$. For all $x^{\mu_{i}}$, either $\mu_{i} \geq 0$ or $\mu_{i} \leq 0$ for all $1 \leq i \leq n$. In either case, RR3 never holds, and so these are unique geodesics.
    
    $\mathcal{C}_{2}$: The case where $w = x^{j}$ is clear, so suppose $w = x^{\mu_{1j}}y^{\ep_{1j}}\dots y^{\ep_{n_{j}j}}x^{j}$. Then $\mu_{ik} \in [-(2k-j-1), j] \cap \Z_{\neq 0}$ for all $1 \leq i \leq n_{j}$ . For RR3 to hold, we would need some $\sigma > 0 > \delta$ where $\sigma, \delta \in [-(2k-j-1), j] \cap \Z_{\neq 0}$ such that $\sigma + |\delta| = 2k$. However if that was the case, then
    \[ \sigma + |\delta| \leq j + 2k-j-1 = 2k-1 < 2k.
    \]
    Hence RR3 never holds, and so geodesics are unique.
    
    $\mathcal{C}_{3}$: consider $x^{\alpha}y^{\ep_{1}}x^{\nu_{1}}\dots x^{\nu_{n}} \in \mathcal{C}_{3}$. We have $\alpha, \nu_{i} \in [-(k-1), k-1] \cap \Z$ for all $1 \leq i \leq n$. Hence if RR3 holds for some $\sigma>0>\delta$, then
    \[ \sigma+|\delta| \leq 2(k-1) = 2k-2 < 2k.
    \]
    Hence RR3 can never be applied, and so geodesics are unique. This completes the set $\mathcal{C}$ of unique geodesics.
    \par 
    For the set $\mathcal{D}$, assume $w$ is of the form in \cref{eq:any j}. If RR3 holds for some $\sigma > 0 > \delta$ such that $\sigma \neq j$ or $\delta \neq -(2k-j)$, then since $\sigma, \delta \in [-(2k-j-1), j-1] \cap \Z_{\neq 0}$, we have 
    \[ \sigma + |\delta| \leq j-1 + 2k-j-1 = 2k-2 < 2k,
    \]
    which is a contradiction. The only other possibility to apply RR3 is using any $x^{j}$ and $x^{-(2k-j)}$ terms. Since $j + |-(2k-j)| = 2k$, we can apply RR3 to rewrite our word in the form 
    \[ ux^{j}vx^{-(2k-j)}w =_{G} ux^{-(2k-j)}vx^{j}w,
    \]
    for all pairs of $x^{j}, x^{-(2k-j)}$ terms. Hence $\mathcal{D}$ consists of non-unique geodesics.
\end{proof}
\begin{rmk}\label{sec: error info}
    In the original classification of geodesics in \cite{EdjJohn92}, there is a fourth type of geodesic defined as follows when $p = 2k$:
\begin{enumerate}
    \item[Type 4.] $\left(x^{\mu_{1k}}y^{\ep_{1}}\dots x^{\mu_{nk}}y^{\ep_{n}}\right)x^{k}\left(y^{\ep}x^{\nu_{1}}y^{\delta_{1}}\dots x^{\nu_{m}}y^{\delta_{m}}\right)x^{-k}\left(y^{\gamma_{1}}x^{\lambda_{1}}\dots y^{\gamma_{l}}x^{\lambda_{l}}\right)$ where:
    \begin{itemize}
        \item $n,m,l \geq 0$,
        \item $\ep_{s}, \ep, \delta_{t}, \gamma_{u} \in \Z_{\neq 0} \; (1 \leq s \leq n, 1 \leq t \leq m, 1 \leq u \leq l)$,
        \item $\mu_{sk} \in [-(k-1), k] \cap \Z_{\neq 0} \; (1 \leq s \leq n)$, 
        \item $\nu_{t} \in [-(k-1), k-1] \cap \Z_{\neq 0} \; (1 \leq t \leq m)$, 
        \item $\lambda_{u} \in [-k, k-1] \cap \Z_{\neq 0} \; (1 \leq u \leq l)$.
    \end{itemize}
    \end{enumerate}
    Due to an error in the original paper, specifically $\mu_{sk} \neq -k$, Type 4 is in fact equivalent to our Type 2 geodesics as defined in \cref{prop:geos 2k}, when $j = k$. This can be seen by the fact that $\nu_{t}, \lambda_{u} \in [-k, k-1] \cap \Z_{\neq 0} \subset [-k, k] \cap \Z_{\neq 0} \; (1 \leq t \leq m, 1 \leq u \leq l)$. 
\end{rmk}

\comm{
\begin{table}[h]
    \begin{adjustbox}{width=1\textwidth}
    \begin{tabular}{|c|c|c|c|}  
        \hline
         Reference & Type & Form and conditions  & Unique/Non-unique \\
        \hline \hline
        \multirow{3}{*}{\cref{subsec: p=2}} & (1) & $\begin{array}{c}
            w = (x^{\mu_{1}}y^{\ep_{1}}\dots x^{\mu_{n}}y^{\ep_{n}}\Delta^{c})^{\pm 1}  \\
            c, n \geq 1,  \\
            \mu_{1} \in \{0, 1\}, \; \mu_{i} = 1 \;(2 \leq i \leq n), \\
            \ep_{i} \in \Z \; (1 \leq i \leq n), \ep_{i} \neq 0 \; (1 \leq i \leq n-1).
        \end{array}$ & Unique \\
        \cline{2-4}
        & (2) & $\begin{array}{c}
             w = A_{1}xA_{2}x\dots A_{\tau_{1}}xA_{\tau_{1}+1}x^{-1}A_{\tau_{1} + 2}x^{-1}\dots x^{-1}A_{\tau_{1} + \tau_{2} + 1}, \\
             A_i=y^{\ep_{i}}, \ep_{i} \in \Z_{\neq 0}\\
             \tau_{1}, \tau_{2} \geq 1
        \end{array}$ & Non-unique \\
        \cline{2-4}
        & (3) & $w = y^{\beta}, \; \beta \in \Z_{\neq 0}$ & Unique \\
        \hline
    \end{tabular}
    \end{adjustbox}
    \caption{Subcase of even dihedral Artin groups: $\mathrm{BS}(2,2)$ (that is, $p=2$)}
    \label{tab:my_label}
\end{table}
}

\end{appendices}
\section*{Acknowledgments}
The authors would like to thank Yago Antol\'{i}n, Martin Edjvet, Thomas Haettel and Fujii Michihiko for their generous advice and helpful discussions. 

\bibliography{references}
\bibliographystyle{plain}
\uppercase{\footnotesize{Institute for Mathematics, TU Berlin, Germany \& Dept. of Mathematics, Heriot-Watt University \& Maxwell Institute for Mathematical Sciences, Edinburgh}}
\par
\uppercase{\footnotesize{Department of Mathematics, University of Manchester M13 9PL, UK and the Heilbronn Institute for Mathematical Research, Bristol, UK}}
\par 
\textit{Email addresses:} \texttt{L.Ciobanu@hw.ac.uk, ciobanu@math.tu-berlin.de, gemma.crowe@manchester.ac.uk}

\end{document}